
\documentclass[smallextended]{svjour3}       
\smartqed  
\usepackage{mathrsfs}
\usepackage{amssymb,amsmath,amsfonts}
\usepackage{graphicx,subfigure,epstopdf}
\usepackage[usenames]{color}
\usepackage{url}
\usepackage{algorithm,algorithmic}
\usepackage{dsfont,verbatim}
\usepackage[colorlinks,linktocpage,linkcolor=blue]{hyperref}
\usepackage{enumerate,enumitem}
\usepackage[normalem]{ulem}
\usepackage{cite}
\usepackage{bm}
\allowdisplaybreaks

\setlength{\textwidth}{130mm}
\setlength{\textheight}{216mm}

\usepackage[T1]{fontenc}
\usepackage[utf8]{inputenc}
\usepackage[english]{babel} 



\numberwithin{theorem}{section}
\numberwithin{lemma}{section}
\numberwithin{remark}{section}
\numberwithin{equation}{section}

\def\d{{\rm d}}

\def\R{\mathbb{R}}
\def\C{\mathbb{C}}
\def\D{\mathbb{D}}

\def\i{\mathrm{i}}

\def\ztr{\frac{e^{-\tau z} + e^{\tau z} - 2}{z^2 \tau}}
\def\ztrt{\frac{e^{-\tau z} + e^{\tau z} - 2}{z^2 \tau^2}}

\newcommand{\Ppol}[1]{\mathcal{P}_{#1}}

\newcommand{\vertiii}[1]{{\left\vert\kern-0.25ex\left\vert\kern-0.25ex\left\vert #1 
    \right\vert\kern-0.25ex\right\vert\kern-0.25ex\right\vert}}

\definecolor{green}{rgb}{0,0.7,0}

\definecolor{darkred}{rgb}{.7,0,0}

\begin{document}

\title{Weak maximum principle of finite element methods\\ for parabolic equations in polygonal domains}
\titlerunning{\,}        

\author{Genming Bai \and Dmitriy Leykekhman \and Buyang Li 
}

\authorrunning{\,} 

\institute{G. Bai and B. Li \at
              Department of Applied Mathematics, The Hong Kong Polytechnic University, Hung Hom, Hong Kong. 
              Email address: genming.bai@connect.polyu.hk, buyang.li@polyu.edu.hk\\
           \and
              D. Leykekhman \at
              Department of Mathematics, University of Connecticut, Storrs, CT 06269, USA. 
              Email address: dmitriy.leykekhman@uconn.edu
 }

\date{Received: date / Accepted: date}

\maketitle

\begin{abstract}
The weak maximum principle of finite element methods for parabolic equations is proved for both semi-discretization in space and fully discrete methods with $k$-step backward differentiation formulae for $k = 1,\dots,6$, on a two-dimensional general polygonal domain or a three-dimensional convex polyhedral domain. The semi-discrete result is established via a dyadic decomposition argument and local energy estimates in which the nonsmoothness of the domain can be handled. The fully discrete result for multistep backward differentiation formulae is proved by utilizing the solution representation via the discrete Laplace transform and the resolvent estimates, which are inspired by the analysis of convolutional quadrature for parabolic and fractional-order partial differential equations. \\
\keywords{Parabolic equation, finite element method, weak maximum principle, full discretization, backward differentiation formulae, Laplace transform, analytic semigroup, local energy estimate, nonsmooth domain.}
\end{abstract}

\setlength\abovedisplayskip{3.5pt}
\setlength\belowdisplayskip{3.5pt}

\section{Introduction}\label{section:introduction}
Maximum principle serves as a fundamental mathematical tool for the study of elliptic and parabolic partial differential equations (PDEs). The discrete counterpart of maximum principle associated to finite element methods (FEMs) has a well-established history of research and remains an active field. However, unlike its continuous counterpart, the discrete maximum principle is not an inherent property and is significantly influenced by the triangulation of the physical domain \cite[\textsection 5]{WangZhang_2012}. 

In two dimensions, the applicability of the discrete maximum principle for elliptic equations is primarily confined to piecewise linear elements, as quadratic elements necessitate uniform equilateral triangulation \cite{Hohn_Mittelmann_1981}. The scenario becomes more intricate in three dimensions \cite{Brandts_Korotov_Krizek_2009, Korotov_Krizek_2001, Korotov_Krizek_Pekka_2001, Xu_Zikatanov_1999}. Particularly, it becomes challenging to assure the discrete maximum principle for even piecewise linear elements. Nonetheless, a large number of applications do not require a strong discrete maximum principle. Schatz demonstrated in \cite{Schatz80} that a weak maximum principle (also known as the Agmon-Miranda principle) is applicable to a broad spectrum of finite elements on general quasi-uniform triangulation in any two-dimensional polygonal domains. This weak maximum principle has recently been extended to three-dimensional convex polyhedral domains \cite{Leykekhman_Li_2021} and the Neumann problem in two-dimensional polygonal domains \cite{Leykekhman_Li_2017, LiBuyang_2022}. The situation for parabolic equations is more complex, depending not only on space discretization but also on time discretization. 

This paper focuses on the weak maximum principle of semi-discrete and fully discrete FEMs for the following parabolic equation: 
\begin{align}\label{eq: PDE}
	\left\{
	\begin{aligned}
		\partial_t u - \Delta u &= 0 &&\mbox{in}\,\,\, (0,\infty) \times \Omega,\\ 
		u&=g &&\mbox{on}\,\,\,  [0,\infty) \times \partial\Omega ,\\
		u|_{t=0}&=u(0) &&\mbox{in}\,\,\,\Omega .
	\end{aligned}
	\right.	
\end{align} 
Specifically, we are interested in determining whether the semi-discrete and fully discrete finite element solutions of \eqref{eq: PDE} are uniformly bounded by constant multiples of $\|g\|_{L^\infty(\partial\Omega)}$ and $\|u(0)\|_{L^\infty(\Omega)}$. 

The discrete maximum principle of fully discrete FEMs for parabolic equations was initially addressed by Fujii in \cite{Fujii_1973}, demonstrating that the backward Euler method may satisfy a maximum principle for specific families of acute triangulations. However, his results require a lower bound for the time step size, and therefore do not imply the discrete maximum principle for the semi-discrete FEM (which would necessitate the step size tending to zero). In \cite{Thomee_Wahlbin_2008}, it was established that without the application of the mass lumping method, the strong discrete maximum principle for the semi-discrete FEM does not hold, even for acute triangulation, thereby making the mass lumping method a necessary requirement. These results were extended to more general conditions as well as more general single-step time-stepping methods (again, with a lower bound for the step size) and numerically illustrated in \cite{Chatzipantelidis_Horvath_Thomee_2015}. To the best of our knowledge, the weak discrete maximum principle of the semi-discrete FEM for parabolic equations has not been addressed, a gap this paper aims to fill.

The weak maximum principle of the semi-discrete FEM with respect to the initial value is equivalent to the uniform boundedness of the finite element heat semigroup in $L^\infty(\Omega)$. The latter was proved in \cite{Schatz98} for smooth domains by assuming that the partition of the domain contains curved simplices which fit the geometry of the boundary exactly. The extension to nonsmooth polygonal and polyhedral domains, thus a standard triangulation with flat simplices can fit the boundary exactly, introduces more technical challenges where the elliptic regularity theory and finite element error estimates degenerate. A comprehensive solution to these issues, which emerge due to the domain's nonsmoothness, was presented in a recent paper \cite{Li19}. However, the weak maximum principle of the semi-discrete FEM with respect to the boundary value $g$ remains unaddressed, and this issue can't be transformed into the uniform boundedness of the semigroup in $L^\infty(\Omega)$. This is proved in the current paper by the representation formula via discrete Laplace transform, discrete resolvent estimates on the contour and the consistency of discrete and continuous Laplace transform together with the elliptic weak maximum principle developed in \cite{Leykekhman_Li_2021}.

Regarding time discretizations, the maximal $L^pL^q$ regularity for multistep BDF and Runge-Kutta time-stepping methods, with $1<p,q<\infty$, was shown in \cite{KLL16} and \cite{Li22} for semi-discretization in time and fully discrete FEMs, respectively. The authors utilized the $R$-boundedness concept to reduce maximal $L^pL^q$ regularity to set inclusion \cite[Theorem 1.11]{KW04}, which is further characterized by the concept of $A(\alpha)$-stability. In addition to BDF, the discontinuous Galerkin time-stepping method was studied in \cite{Ley17}, and the $\theta$-scheme was considered in \cite{Kem18}. These findings are closely tied to the weak maximum principle of time discretizations with respect to the initial value. However, the weak maximum principle of time discretizations with respect to the boundary value has not been addressed. This question is answered affirmatively in the current paper for $k$-step BDF methods with $k = 1,\dots,6$. The analysis was particularly challenging due to the existence of boundary data $g$, the domain's nonsmoothness, and the lack of $A$-stability for $k$-step BDF with $3\leq k\leq 6$. The two main points of analysis in this paper are: 
\begin{enumerate}
	\item[\footnotesize$\bullet$]
	The weak maximum principle of the semi-discrete FEM requires proving the following estimate: 
	\begin{align}\label{eq:F_err}
		\| \Gamma_h - \Gamma \|_{L^1(0,\infty; L^1(D_h))} \leq h ,
	\end{align}
	where $D_h=\{x\in\Omega:{\rm dist}(x,\partial\Omega)\le h\}$, $\Gamma$ and $\Gamma_h$ are the parabolic Green's functions with respect to the regularized delta function $\tilde\delta$ (see \cite[Lemma 2.2]{Tho00}) and the discrete delta function $\delta_h $ (to be defined in Section \ref{sec:Green}), respectively. This error estimate is not straightforward due to the lack of an approximation property for the initial value $\Gamma_h(0) - \Gamma(0) = \delta_h - \tilde\delta = (P_h- 1) \tilde\delta$ in the error equation (where $P_h$ denotes the $L^2$ orthogonal projection onto the finite element space), as the initial value $\tilde \delta$ is only bounded by a universal constant $C$ in the (very weak) $L^1$ norm. The standard nonsmooth data error estimate \cite[Chapter 3]{Tho06} and the negative norm error estimate \cite[Chapter 5]{Tho06} are both hindered in the case of a nonsmooth domain. Note that the error estimate \eqref{eq:F_err} is applicable when the underlying domain is sufficiently smooth \cite[Proposition 3.2]{Schatz98}. However, for a nonconvex polygon, the argument of \cite[Proposition 3.2]{Schatz98} is no longer valid due to the degeneracy of the consistency error and the finite element error estimates. The solution is to leverage the local H\"older's inequality, the smoothing property of the discrete heat semigroup, and the lower bound $\alpha > \frac12$, where $\alpha$ is the indicator of the domain's smoothness \cite[Lemma 4.2]{Li19}. The lower bound $\frac12$ is sharp in our proof in the sense that our proof does not work if $\alpha < \frac12 $.

	\medskip\vspace{-3pt}
	
	\item[\footnotesize$\bullet$]
	Weak maximum principle of time discretizations is accomplished by studying the representation formula of $u_h^n$ in terms of the generating functions $\tilde u(\zeta)=\sum_{n=k}^\infty u_h^n$, which may alternatively be understood as the discrete Laplace transform or $z$-transform of the fully discrete finite element solution. For the representation of $u_{h,2}^n$ — the second part of the fully discrete solution — readers are referred to equations \eqref{eq:gen_iden}, \eqref{hat-uh-z}, and \eqref{u_{h,2}-tn-repr}. 

However, it is crucial to note that a direct application of the $L^\infty$ norm to the representation formula of $u_h^n$ introduces a logarithmic factor. This factor emerges from the summation process, which is a consequence of the discrete convolution as in convolutional quadrature \cite{Lubich88}, \cite[Chapter 3]{JZ23}:
\begin{align}\label{summ-1}
 \sum_{j=1}^{\lfloor T/\tau \rfloor} \frac{1}{j} \approx \log \frac{T}{\tau} .
\end{align}
In order to eliminate this logarithmic factor, a comparison is conducted between the representation formulas of the discrete and continuous Laplace transforms, as outlined in equation \eqref{BDF-FEM-difference}. This comparison necessitates the development of consistency between these two types of Laplace transform.

We draw attention to a central correspondence relation concerning the boundary data, as stated in Lemma \ref{lemma:z-trans}:
\begin{align}
 \hat g_h (z) = \frac{e^{-\tau z} + e^{\tau z} - 2}{z^2 \tau} \tilde g_h(e^{-\tau z}), \notag
\end{align}
Here, $\tilde g_h(\zeta)$ signifies the generating function (the discrete Laplace transform or $z$-transform) of $g_h^n$, while $\hat g_h (z)$ represents the continuous Laplace transform of $g_h (t)$. The latter transform corresponds to the piecewise linear reconstruction of the nodal values $g_h^n$. The removal of the logarithmic factor is exactly accomplished by utilizing this consistency, which improves \eqref{summ-1} to the following result: 
\begin{align}
 \sum_{j=1}^{\infty} \frac{1}{j^2} \leq C.
\end{align}
Furthermore, it should be noted that BDF-$k$ is not $A$-stable for $3\leq k \leq 6$. This instability necessitates the adherence to the BDF-$1$ symbol within the resolvent operator, i.e., $(\frac{1 - e^{-\tau z}}{\tau} - \Delta_h )^{-1}$. Consequently, to eliminate the logarithmic factor for BDF-$k$, consistency of the solution maps associated with BDF-$1$ and BDF-$k$ is required, as outlined in Lemma \ref{lemma:M-L}.
\end{enumerate}

The rest of this article is organized as follow. In Section \ref{sec: main results} we state the two main theorems regarding the weak maximum principle of semi-discrete and fully discrete FEMs for parabolic equations. In Section \ref{sec: semi-discrete}, we introduce the notation and establish the weak discrete maximum principle of the semi-discrete FEM. In Section \ref{sec: fully discrete}, we
prove the fully discrete weak maximum principle of fully discrete FEMs with multistep BDF time-stepping methods. Finally, in the conclusion section, we briefly discuss the extension to discontinuous Galerkin time-stepping methods with possibly highly variable time step sizes, as well as the limitation of current approach and the possibility to address it based on novel approaches developed recently.  

\section{The main results}\label{sec: main results}
\setcounter{equation}{0}

Let $S_h$, $0<h<h_0$, be a family of Lagrange finite element subspaces of $H^1(\Omega)$ consisting of all piecewise polynomials of degree $r\ge 1$ subject to a quasi-uniform triangulation of a polygonal/polyhedral domain $\Omega$. We denote by $\mathring S_h$ the subspace of $S_h$ with zero boundary conditions, and denote by $I_h:C(\overline\Omega)\rightarrow S_h$ the Lagrangian interpolation operator onto $S_h$. 

The semi-discrete FEM (FEM) for \eqref{eq: PDE} is to find $u_h\in C^1([0,\infty);S_h)$ satisfying the following weak formulation: 
\begin{align}\label{semi-discrete-FEM}
\left\{
\begin{aligned}
&(\partial_t u_h(t),v_h) + (\nabla u_h(t),\nabla v_h)=0 &&\forall\, v_h\in \mathring S_h, &&\forall\,t\in(0,\infty), \\ 
&u_h(t)=g_h(t) &&\mbox{on}\,\,\,\partial\Omega , &&\forall\,t\in [0,\infty)\\
&u_h(0)= u_h^0 &&\mbox{in}\,\,\,\Omega ,
\end{aligned}
\right.	
\end{align}
where $u_h^0 = I_h u(0) \in S_h(\Omega)$ and $g_h\in S_h(\partial\Omega)$ is the Lagrangian interpolation of $g$ on the boundary $\partial\Omega$. If $\tilde g$ is any extension of $g$ from $C(\partial\Omega)$ to $C(\overline\Omega)$, then $g_h=I_h\tilde g$ on $\partial\Omega$. 

The first main result of this article is the following theorem.

\begin{theorem}[Weak maximum principle of semi-discrete FEM]\label{thm:semi-WMP}
{\it
If $\Omega$ is a polygon (possibly nonconvex) in $\R^2$ or a convex polyhedron in $\R^3$, then the solution of the semi-discrete FEM in \eqref{semi-discrete-FEM} satisfies the following weak maximum principle for $r\ge 1$ in $\R^2$ and $r\ge 2$ in $\R^3$: 
\begin{align}
\|u_h\|_{L^\infty(0,T;L^\infty(\Omega))}
\le C\|u_h^0\|_{L^\infty(\Omega)} +
C\|g_h\|_{L^\infty(0,T;L^\infty(\partial\Omega))} , \notag
\end{align}
where $C$ is a constant independent of $h$ and $T$.
}
\end{theorem}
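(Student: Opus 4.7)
The plan is to reduce Theorem~\ref{thm:semi-WMP} by linearity to two separate bounds. Writing $u_h=u_{h,1}+u_{h,2}$ with $u_{h,1}$ the solution of \eqref{semi-discrete-FEM} carrying the initial data $u_h^0$ and homogeneous boundary data, and $u_{h,2}$ the solution carrying the boundary data $g_h$ with zero initial data, it suffices to prove
\begin{align*}
\|u_{h,1}\|_{L^\infty(0,T;L^\infty(\Omega))}\le C\|u_h^0\|_{L^\infty(\Omega)}
\quad\text{and}\quad
\|u_{h,2}\|_{L^\infty(0,T;L^\infty(\Omega))}\le C\|g_h\|_{L^\infty(0,T;L^\infty(\partial\Omega))}
\end{align*}
uniformly in $h$ and $T$.

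For $u_{h,1}$ I would quote the uniform $L^\infty\to L^\infty$ boundedness of the semi-discrete heat semigroup $e^{t\Delta_h}$ on $\mathring S_h$, proved in \cite{Li19} precisely for the class of nonsmooth polygons and convex polyhedra considered here. The element-degree restriction $r\ge 1$ in $\R^2$ and $r\ge 2$ in $\R^3$ in the statement comes through from the cited results. This step yields the initial-value part of the weak maximum principle with no new work.

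The nontrivial part is $u_{h,2}$. Here the plan is a Laplace-transform argument: take the Laplace transform of \eqref{semi-discrete-FEM} in time, so that $\hat u_{h,2}(z)\in S_h$ satisfies the complex-parameter elliptic problem $z\,\hat u_{h,2}(z)-\Delta_h\hat u_{h,2}(z)=0$ in the interior together with $\hat u_{h,2}(z)=\hat g_h(z)$ on $\partial\Omega$ in the Lagrangian nodal sense, and then represent
\begin{align*}
u_{h,2}(t)=\frac{1}{2\pi\mathrm{i}}\int_{\mathcal C} e^{tz}\hat u_{h,2}(z)\,\mathrm{d}z
\end{align*}
on a Hankel-type contour $\mathcal C$ lying in a sector of analyticity of the discrete resolvent. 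Inside the integrand I would apply the elliptic weak maximum principle of \cite{Leykekhman_Li_2021}, adapted to the complex resolvent, to control $\|\hat u_{h,2}(z)\|_{L^\infty}$ by $\|\hat g_h(z)\|_{L^\infty(\partial\Omega)}$ with the appropriate $z$-dependence, and then estimate the contour integral in parallel with the continuous Dirichlet heat problem, for which the weak maximum principle is classical. The quantitative comparison between the discrete and continuous representations is effected by the parabolic Green's function estimate
\begin{align*}
\|\Gamma_h-\Gamma\|_{L^1(0,\infty;L^1(D_h))}\le Ch
\end{align*}
of \eqref{eq:F_err}, in which $\Gamma$ and $\Gamma_h$ are the continuous and semi-discrete heat kernels with initial data $\tilde\delta$ and $\delta_h$, respectively, and the factor $h$ on the right-hand side absorbs the thickness of the boundary strip $D_h$ when integrated against the boundary data $g_h$.

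The main obstacle I anticipate is \eqref{eq:F_err} itself on a possibly nonconvex polygon. The smooth-domain route of \cite[Proposition 3.2]{Schatz98} is unavailable because $\Gamma$ has only limited regularity near reentrant corners and the initial discrepancy $\Gamma_h(0)-\Gamma(0)=(P_h-1)\tilde\delta$ is merely $L^1$-bounded, so neither standard nonsmooth-data nor negative-norm estimates apply. The approach would be a dyadic-in-distance decomposition of $\Omega$ into annular regions about the base point of $\tilde\delta$, combined with Schatz--Wahlbin type local energy estimates for the error $\Gamma_h-\Gamma$ on each annulus, using the smoothing of the discrete semigroup to upgrade the rough initial data after a short transient and using H\"older's inequality against the small local measure $|D_h\cap\Omega_j|$. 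The measure bound $|D_h|\lesssim h$ supplies the overall factor of $h$, and summation over dyadic annuli is controlled by the sharp lower bound $\alpha>\tfrac12$ from \cite[Lemma 4.2]{Li19}, which is the borderline regime flagged in the introduction and which holds for every two-dimensional polygon and every three-dimensional convex polyhedron treated in the theorem.
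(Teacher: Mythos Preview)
Your decomposition into $u_{h,1}$ and $u_{h,2}$ and your handling of the initial-value part via \cite{Li19} are exactly what the paper does. The dyadic decomposition and local energy estimate strategy you outline in your last paragraph is also the right engine, and the role of the sharp bound $\alpha>\tfrac12$ is identified correctly.

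However, your treatment of the boundary-value part $u_{h,2}$ diverges from the paper's route and has a coherence gap. The paper does \emph{not} use a Laplace-transform argument for the semi-discrete case; that machinery is reserved for the fully discrete BDF analysis in Section~\ref{sec: fully discrete}. Instead, the paper writes a direct time-domain representation of $u_h(s,x_0)$ against the discrete parabolic Green's function $\Gamma_h$, subtracts the analogous formula for the continuous solution $\tilde u$ against $\Gamma$, and arrives at
\[
u_h(s,x_0)=(\tilde u(s,\cdot),\tilde\delta_{x_0})-\int_0^s(\tilde g_h,\partial_t F)_{D_h}\,\d t-\int_0^s(\nabla\tilde g_h,\nabla F)_{D_h}\,\d t,
\]
with $F=\Gamma_h-\Gamma$ and $\tilde g_h$ the zero-interior-node extension of $g_h$. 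The crucial estimate is therefore the \emph{gradient} bound
\[
h^{-1}\|\nabla F\|_{L^1((0,\infty)\times D_h)}\le C,
\]
not the undifferentiated bound \eqref{eq:F_err} you cite: the factor $h^{-1}$ is forced because $\nabla\tilde g_h$ is $O(h^{-1})$ on $D_h$. Your proposal never makes this gradient estimate explicit, and the Laplace-transform representation you set up does not naturally produce it; it is a time-domain object obtained from the local energy estimates of Lemma~\ref{LocEEst}.

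More seriously, you splice the Laplace-transform contour representation together with the time-domain Green's-function estimate \eqref{eq:F_err} without explaining how the two connect. A pure Laplace approach would compare the discrete and continuous \emph{resolvent} problems in the frequency variable $z$, not the Green's functions in $t$; conversely, once you have the Green's-function representation above, the Laplace transform is superfluous. As written, the plan contains two partial strategies but not a complete one.
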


We denote by $\delta_j, j=0,...,k$, the coefficients of the generating polynomial of the BDF-$k$ method (see \cite[equation (3.5)]{JZ23}), i.e., 
\begin{align*}
	\delta(\zeta) = \sum_{j=1}^k \frac{1}{j} (1 - \zeta)^j = \sum_{j=0}^k\delta_j \zeta^j ,
\end{align*}
and use the notation $\delta_\tau(\zeta) = \tau^{-1} \delta(\zeta)$, where $\tau$ denotes the step size of time discretization. The BDF-$k$ method is know to be $A(\theta_k)$-stable, i.e., the set $\{z \in \C \backslash \{0\} : | \arg(z)| < \theta_k \}$ is contained in the stability region of the method (see \cite[p. 251]{WH96}), where $\theta_k = 90^\circ$, $90^\circ$, $86.03^\circ$, $73.35^\circ$, $51.84^\circ$, $17.84^\circ$ for $k = 1, 2, 3, 4, 5, 6$, respectively.

For the BDF-$k$ method, we assume that the starting values $u_h^n$, $n=0,...,k-1,$ are given or computed from some other time-stepping methods (such as the Taylor expansion method or the Runge--Kutta method).

Given any final time $T \geq (k-1)\tau$, we set $N = \lfloor T/\tau \rfloor $. For $n=k,..., N$, the fully discrete FEM with BDF-$k$ time-stepping method seeks $u_h^n\in S_h$ such that 
\begin{align}\label{Euler-FEM}
	\left\{
	\begin{aligned}
		&\bigg( \frac{1}{\tau} \sum_{j=0}^k \delta_j u_h^{n-j} ,v_h\bigg) + (\nabla u_h^n,\nabla v_h)=0 &&\forall\, v_h\in \mathring S_h, &&n=k,\dots, N , \\[2pt]
		&u_h^n=g_h^n &&\mbox{on}\,\,\,\partial\Omega ,
		&&n=k,\dots, N , 
		\end{aligned}
	\right.	
\end{align}
where $g_h^n = I_h g(t_n) \in S_h(\partial\Omega)$.

The second main result of this article is the following theorem.

\begin{theorem}[Weak maximum principle of fully discrete FEM]\label{THM:BDF-FEM}
	{\it If $\Omega$ is a polygon (possibly nonconvex) in $\R^2$ or a convex polyhedron in $\R^3$, then the fully discrete solution given by \eqref{Euler-FEM} satisfies the following weak maximum principle for $r\ge 1$ in $\R^2$ and $r\ge 2$ in $\R^3$: 
		\begin{align}
			\max_{k\le n\le N}\|u_h^n\|_{L^\infty(\Omega)}
			\le 
			C \max_{0\le n\le k-1} \|u_h^n\|_{L^\infty(\Omega)}
			+ 
			C\max_{k\le n\le N}\|g_h^n\|_{L^\infty(\partial\Omega)}, \notag
		\end{align}
		where $C$ is a constant independent of $\tau$, $h$ and $N$.
	}
\end{theorem}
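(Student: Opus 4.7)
The plan is to follow the strategy indicated in the introduction, splitting the fully discrete solution into an initial-data part and a boundary-data part, and treating each via a discrete Laplace transform representation combined with resolvent estimates. Write $u_h^n = u_{h,1}^n + u_{h,2}^n$, where $u_{h,1}^n$ solves the BDF-$k$ scheme with the given starting values $u_h^0,\dots,u_h^{k-1}$ and zero boundary data, and $u_{h,2}^n$ solves the scheme with zero starting values and boundary data $g_h^n$. For the first component, one proceeds by the contour integral representation associated with $\delta_\tau(\zeta)-\Delta_h$ along a Hankel-type contour lying inside the $A(\theta_k)$-stability sector of BDF-$k$; the $L^\infty$-analyticity of the discrete heat semigroup $e^{t\Delta_h}$ (from \cite{Li19}) then gives the bound $\max_n\|u_{h,1}^n\|_{L^\infty(\Omega)}\le C\max_{0\le n\le k-1}\|u_h^n\|_{L^\infty(\Omega)}$.

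The main effort is devoted to $u_{h,2}^n$. I would introduce the generating function $\tilde u_{h,2}(\zeta)=\sum_{n\ge k}u_{h,2}^n\zeta^n$, which by the BDF-$k$ equation satisfies, with $z=\delta_\tau(\zeta)$,
\begin{equation*}
(z-\Delta_h)\tilde u_{h,2}(\zeta)=0\ \text{in}\ \Omega,\qquad \tilde u_{h,2}(\zeta)=\tilde g_h(\zeta)\ \text{on}\ \partial\Omega,
\end{equation*}
so inverting the $z$-transform via $\zeta=e^{-\tau z}$ along an appropriate contour yields a contour representation of $u_{h,2}^n$. At each $\zeta$ on the contour, subtracting a discrete harmonic extension of $\tilde g_h(\zeta)$ reduces the problem to a resolvent equation with zero boundary data, the boundary contribution being controlled pointwise by the elliptic weak maximum principle of \cite{Leykekhman_Li_2021}. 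A direct $L^\infty$ bound on the resulting contour integral, however, produces the logarithmic factor $\sum_{j=1}^{N}j^{-1}\approx\log(T/\tau)$ indicated in \eqref{summ-1}, which is incompatible with the uniform estimate we seek.

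To eliminate this factor I would invoke the consistency relation of Lemma~\ref{lemma:z-trans},
\begin{equation*}
\hat g_h(z)=\frac{e^{-\tau z}+e^{\tau z}-2}{z^2\tau}\,\tilde g_h(e^{-\tau z}),
\end{equation*}
and substitute $\tilde g_h(e^{-\tau z})$ in the contour representation by $z^2\tau\,(e^{-\tau z}+e^{\tau z}-2)^{-1}\hat g_h(z)$, so that the stronger decay of $\hat g_h(z)$, inherited from the piecewise-linear-in-time reconstruction of $\{g_h^n\}$, upgrades the critical time summation from $\sum_j j^{-1}$ to $\sum_j j^{-2}<\infty$. Since BDF-$k$ for $3\le k\le 6$ is only $A(\theta_k)$-stable with $\theta_k<\pi/2$, the natural contour for the $L^\infty$ analysis must bend into a strictly proper sector, which is incompatible with the contour adapted to the continuous Laplace transform; I would therefore route the argument through the BDF-$1$ resolvent $(\tau^{-1}(1-e^{-\tau z})-\Delta_h)^{-1}$, whose $A$-stability permits a contour in a sector containing the negative real axis, and bound the discrepancy with the full BDF-$k$ solution operator via the consistency estimate of Lemma~\ref{lemma:M-L}. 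The principal obstacle is precisely this simultaneous removal of the logarithmic factor and circumvention of the non-$A$-stability of higher-order BDF; both are resolved by the $|z|^{-2}$ prefactor appearing in Lemma~\ref{lemma:z-trans}, which provides exactly the borderline gain needed to render the discrete convolution uniformly summable in $N$.
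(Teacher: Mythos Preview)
Your proposal captures the right toolkit---generating functions, the contour representation \eqref{u_{h,2}n-repr}, Lemma~\ref{lemma:z-trans}, and Lemma~\ref{lemma:M-L}---but the mechanism you describe for eliminating the logarithm does not work as stated. Substituting $\tilde g_h(e^{-\tau z})=\dfrac{z^2\tau}{e^{-\tau z}+e^{\tau z}-2}\,\hat g_h(z)$ into the discrete contour integral gains nothing on the truncated contour $\Gamma_{\theta,\sigma}^\tau$: there $|z|\tau$ is bounded, the prefactor is $O(\tau^{-1})$, and by Lemma~\ref{lemma:z-trans} one has $|\hat g_h(z)|\sim \tau|\tilde g_h(e^{-\tau z})|$, so the product reproduces $\tilde g_h(e^{-\tau z})$ and the summation is still $\sum_j j^{-1}$. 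There is no ``stronger decay of $\hat g_h(z)$'' to exploit on this portion of the contour.

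What the paper actually does is \emph{subtract} the semi-discrete solution $u_{h,2}(t_N)$ (equation \eqref{semi-discrete-FEM-2}), expressed via its continuous Laplace transform \eqref{u_{h,2}-tn-repr}, from the fully discrete representation \eqref{u_{h,2}n-repr}. The difference \eqref{BDF-FEM-difference} carries the factors $\big(1-\frac{e^{-\tau z}+e^{\tau z}-2}{z^2\tau^2}\big)=O(|z|^2\tau^2)$ and $\tilde M_h-\tilde L_h=O(|z|\tau)$, which are precisely what upgrade the kernels to $\sum_j j^{-3}$ and $\sum_j j^{-2}$; the semi-discrete piece itself is then bounded by Theorem~\ref{thm:semi-WMP}. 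Your sketch never invokes Theorem~\ref{thm:semi-WMP}, which is the essential input here. A secondary point you miss is the three-way (not two-way) splitting \eqref{uhN-decomp}: the paper peels off the last two boundary values $g_h^{N-1},g_h^N$ into a separate piece $u_{h,1}^n$ treated by direct elliptic estimates, so that the remaining boundary-data part $u_{h,2}^n$ has $g_h\equiv 0$ on $[t_{N-1},t_N]$; this is what makes the decomposition $\hat u_{h,2}=\hat u_{h,21}+(z-\Delta_h)^{-1}e^{-zt_{N-1}}u_{h,2}(t_{N-1})$ and the resulting $K_h$, $Q_h$ kernel estimates go through.
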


\begin{remark}
From the proof of Theorems \ref{thm:semi-WMP} and \ref{THM:BDF-FEM} in the following sections we can see that, for a convex polyhedron in $\R^3$ and finite elements of degree $r=1$, the semi-discrete and fully discrete FEMs  satisfies the following weak maximum principle with an additional logarithmic factor, i.e., 
\begin{align}
\|u_h\|_{L^\infty(0,T;L^\infty(\Omega))}
&\le 
C\ln(2+1/h) \|g_h\|_{L^\infty(0,T;L^\infty(\partial\Omega))} , \\
\max_{k\le n\le N}\|u_h^n\|_{L^\infty(\Omega)}
			&\le 
			C \max_{0\le n\le k-1} \|u_h^n\|_{L^\infty(\Omega)}
			+ 
			C\ln(2+1/h) \max_{k\le n\le N}\|g_h^n\|_{L^\infty(\partial\Omega)} .
\end{align}
\end{remark}

\section{Notation, finite element discretization, and regularized Green's function}\label{Sec:notation}

\subsection{Function spaces and notation}

We use the conventional notations of Sobolev spaces $W^{s,q}(\Omega)$, $s\ge 0$ and $1\leq q\leq\infty$, 
with abbreviations $L^q=W^{0,q}(\Omega)$, $W^{s,q}=W^{s,q}(\Omega)$ and $H^s:=W^{s,2}(\Omega)$. 
The notation $H^{-s}(\Omega)$ denotes the dual space of $H^s_0(\Omega)$. The latter is defined as the closure of $C^\infty_0(\Omega)$ in $H^s(\Omega)$. 

For any given $W^{s,q}$-valued function $f:(0,T)\rightarrow W^{s,q}$, we can define the following Bochner norm: 
\begin{align}
	&\|f\|_{L^p(0,T;W^{s,q})} = 
	\big\| \|f(\cdot)\|_{W^{s,q}}\big\|_{L^p(0,T)} ,\quad\forall\,\, 1\leq p,q\leq \infty,\,\, s\in\R   . 
\end{align} 
For any subdomain $D\subset \Omega$, we define 
\begin{align}\label{Def-HsD}
	\|f\|_{W^{s,q}(D)}:=
	\inf_{\tilde f|_D=f}\|\tilde f\|_{W^{s,q}(\Omega)} 
	,\quad\forall\,\, 1\leq q\leq \infty ,\,\, s\in\R ,
\end{align} 
where the infimum extends over all possible
$\tilde f$ defined on $\Omega$ such that
$\tilde f=f$ in $D$. 
Similarly, for any subdomain $Q\subset {\mathcal Q}=(0,1)\times\Omega$, 
we define 
\begin{align}\label{DefLpX}
	\|f\|_{L^pW^{s,q}(Q)}:=
	\inf_{\tilde f|_Q=f}\|\tilde f\|_{L^p(0,T;W^{s,q})} 
	,\quad\forall\,\, 1\leq p,q\leq \infty ,\,\,  s\in\R  ,
\end{align} 
where the infimum extends over all possible
$\tilde f$ defined on ${\mathcal Q}$ such that
$\tilde f=f$ in $Q$. 

We adopt the following notations for the $L^2$ inner products on $\Omega$ and the space-time $L^2$ inner products on ${\mathcal Q}_T=(0,T)\times\Omega$:  
\begin{align}\label{inner-products}
	(\phi,\varphi):=\int_\Omega \phi(x)\varphi(x)\d x,\qquad
	[u,v]:=\int_0^T\int_{\Omega} u(t,x)v(t,x)\d x \, \d t .
\end{align} 
Moreover, we denote $w(t)=w(t,\cdot)$ for any function $w$ defined on ${\mathcal Q}_T$. The notation $1_{0<t<T}$ will denote the characteristic function of the time interval $(0,T)$, i.e. $1_{0<t<T}(t)=1$ if $t\in(0,T)$ while $1_{0<t<T}(t)=0$ if $t\notin(0,T)$.

\subsection{Properties of the finite element spaces}
\label{Sec2-2}

For any subdomain $D\subset\Omega$, we denote by $\mathring S_h(D)$ the space of functions of $\mathring S_h$  restricted to the domain $D$, and denote by $\mathring S_h^0(D)$ the subspace of $\mathring S_h(D)$ consisting of functions which equal zero outside $D$. For any given subset $D\subset\Omega$, we denote $D_d =\{x\in\Omega: {\rm dist}(x,D)\leq d\}$ for $d>0$. 

On a quasi-uniform triangulation of the domain $\Omega$, there exist positive constants $K $ and $\kappa$ such that the triangulation and the corresponding finite element space $\mathring S_h$ possess the following properties ($K$ and $\kappa$ are independent of the subset $D$ and $h$).
\medskip

\begin{enumerate}[label={\bf (P\arabic*)},ref=\arabic*]\itemsep=5pt

\item {\bf Quasi-uniformity:}

\noindent For all triangles (or tetrahedron) $\tau_l^h$ in the partition,
the diameter $h_l$ of $\tau_l^h$ and the radius $\rho_l$
of its inscribed ball satisfy
$$
K^{-1}h\leq \rho_l \leq h_l\leq Kh .
$$

\item {\bf Inverse inequality:}

\noindent If $D$ is a union of elements in the partition, then
\begin{align*}
	\|\chi\|_{W^{l,p}(D)}
	\leq K h^{-(l-k)-(d/q-d/p)}\|\chi\|_{W^{k,q}(D)} ,
	\quad\forall\,\chi\in \mathring S_h , 
\end{align*}
for $0\leq k\leq l\leq 1$ and 
$1\leq q\leq p\leq\infty$.  

\item {\bf Local approximation and superapproximation:} 

\noindent There exists an operator $I_h:H^1_0(\Omega)\rightarrow \mathring S_h$ with the following properties:

\begin{enumerate}[label=(\arabic*),ref=\arabic*]\itemsep=5pt
	\item 
	For $v\in H^{1+\alpha}(\Omega )\cap H^1_0(\Omega) $ the following estimate holds: 
	\begin{align*}
		&\|v-I_hv\|_{L^2} +h \|\nabla(v- I_hv)\|_{L^2} \leq Kh^{1+\alpha} \|v\|_{H^{1+\alpha}} 
		\quad \forall\, \alpha\in[0,1] ,
	\end{align*}

	\item 
	If $d\geq 2h$ then the value of $I_h v$ in $D$ depends only on the value of $v$ in $D_d$. 
	If $d\ge 2h$ and supp$(v)\subset \overline D$, then $I_hv\in \mathring S_h^0(D_d)$.

\item 
If $d\geq 2h$, $\omega=0$ outside $D$
and $|\partial^\beta\omega|\leq Cd^{-|\beta|}$
for all multi-index $\beta$,
then 
%
\begin{align*}
	&\psi_h\in \mathring S_h(D_d)\implies I_h(\omega\psi_h)\in \mathring S_h^0(D_d),\\
	&\|\omega\psi_h-I_h(\omega\psi_h)\|_{L^2}
	+h\|\omega\psi_h-I_h(\omega\psi_h)\|_{H^1}
	\leq K  h d^{-1}
	\|\psi_h\|_{L^2(D_d)} .
\end{align*}

\item  If $d\ge 2h$ and $\omega\equiv 1$ on $D_d$, then $I_h(\omega\psi_h)=\psi_h$ on $D$. 
\end{enumerate}

\end{enumerate}

The properties (P1)-(P3) hold for any quasi-uniform triangulation with the standard finite element spaces consisting of globally continuous piecewise polynomials of degree $r\ge 1$ (cf. \cite[Appendix]{Schatz95}).
Property (P3)-(1) and the definition \eqref{Def-HsD} imply the following local estimate for $\alpha\in[0,1]$ and $v\in H^{1+\alpha}(D_d)\cap H^1_0(\Omega)$: 
\begin{equation}\label{eq:Ih_app}
\|v-I_hv\|_{L^2(D)} +h\|v-I_hv\|_{H^1(D)} \leq Kh^{1+\alpha} \|v\|_{H^{1+\alpha}(D_d)}.
\end{equation}
%

{
In addition to $I_h$, we will also need the orthogonal $L^2$ projection $P_h \colon L^2(\Omega) \to \mathring S_h(\Omega)$ and Ritz-Projection $R_h \colon H^1_0(\Omega) \to \mathring S_h(\Omega)$, which are defined by
\begin{align*}
(P_hv,\chi)_{\Omega} &= (v,\chi)_{\Omega}, && \forall \chi\in \mathring S_h(\Omega), \\
(\nabla R_hv,\nabla \chi)_{\Omega} &= (\nabla v,\nabla \chi)_{\Omega}, && \forall \chi\in \mathring S_h(\Omega) . 
\end{align*}
For $v\in H^{1+\alpha}(\Omega)\cap H^1_0(\Omega)$ with $\alpha\in[0,1]$, the following global estimate follows from choosing $D=\Omega$ in \eqref{eq:Ih_app}: 
\begin{equation}\label{eq:Ph_app}
\|v-P_hv\|_{L^2(\Omega)} +h\|v-P_hv\|_{H^1(D)} \leq Kh^{1+\alpha} \|v\|_{H^{1+\alpha}(\Omega)}.
\end{equation}
Using \eqref{eq:Ih_app} and a duality argument, we also have the following global estimate for the Ritz projection:
\begin{equation}\label{eq:Rh_app}
\|v-R_hv\|_{L^2(\Omega)} +h^\alpha \|v-R_hv\|_{H^1(\Omega)} \leq Kh^{2\alpha} \|v\|_{H^{1+\alpha}(\Omega)} . 
\end{equation}
  }

\subsection{Green's functions}\label{sec:Green}
For any $x_0\in \tau_{l}^h$ (where $\tau_{l}^h$ is a triangle or a tetrahedron in the triangulation of $\Omega$), there exists a function $\tilde\delta_{x_0}\in C^3(\overline\Omega)$ with support in $\tau_{l}^h$ such that
\begin{align*}
\chi(x_0)=\int_{\Omega}\chi \tilde\delta_{x_0}\d x,
\quad\forall\,\chi\in \mathring S_h ,
\end{align*}
and
\begin{align}
&\|\tilde\delta_{x_0}\|_{W^{l,p}}
\leq K h^{-l-d(1-1/p)}
\quad\mbox{for}\,\,\,1\leq p\leq\infty,
\,\,\, l=0,1,2,3 ,\quad d\geq 1, \label{reg-Delta-est}\\
&\sup_{y\in\Omega} \int_\Omega |\tilde\delta_{y}(x)|\d x+
\sup_{x\in\Omega}\int_\Omega |\tilde\delta_{y}(x)|\d y \le C .
\label{reg-Delta-est-2}
\end{align}
The construction of $\tilde\delta_{x_0}$ can be found in \cite[Lemma 2.2]{Tho00}. 

Let $\delta_{x_0}$ denote the Dirac Delta function centered at $x_0$. In other words, the following relation holds: 
$$\int_\Omega\delta_{x_0}(y)\varphi (y)\d y=\varphi(x_0) \quad\forall\, \varphi\in C(\overline\Omega) . $$ 
Then the discrete Delta function 
$$
\delta_{h,x_0}:=P_h  \delta_{x_0}=P_h \tilde\delta_{x_0} 
$$ 
decays exponentially away from $x_0$ (cf. \cite[Lemma 2.3]{Tho00}): 
\begin{align}\label{Detal-pointwise}
& |\delta_{h,x_0}(x)|=|P_h \tilde\delta_{x_0}(x)|
\leq Kh^{-d} e^{-\frac{|x-x_0| }{K h }} ,
\quad \forall\, x,x_0\in\Omega  .
\end{align}

Let $G(t,x,x_0)$ denote the Green's function
of the parabolic equation, i.e. $G=G(\cdot,\cdot\, ,x_0)$ is the solution
of 
\begin{align}\label{GFdef}
\left\{\begin{array}{ll}
\partial_tG(\cdot,\cdot\, ,x_0)-\Delta G(\cdot,\cdot\, ,x_0)=0
&\mbox{in}\,\,\, (0,T]\times \Omega,\\
G(\cdot,\cdot\, ,x_0) = 0
&\mbox{on}\,\,\, (0,T]\times \partial\Omega ,\\
G(0,\cdot,x_0)= \delta_{x_0}
&\mbox{in}\,\,\,\Omega .
\end{array}\right.
\end{align}
The Green's function $G(t,x,y)$ is symmetric with respect to $x$ and $y$. It has an analytic extension to the right half-plane, satisfying the following Gaussian estimate
(cf. \cite[Lemma 2]{Davies97} and \cite[Theorem 3.4.8]{Davies89}): 
\begin{equation}
|G(z,x,y)|\leq C_\theta|z|^{-\frac{d}{2}}
e^{-\frac{|x-y|^2}{C_\theta |z|}}, 
\quad \forall\, z\in \varSigma_{\theta},\,\,\forall\, x,y\in\varOmega ,
\quad\forall\,\theta\in(0,\pi/2), 
\label{GKernelE0}
\end{equation}
where the constant $C_\theta$ depends only on $\theta$. 
Then Cauchy's integral formula says that 
\begin{equation}
\partial_t^kG(t,x,y)
=\frac{k!}{2\pi i}
\int_{|z-t|=\frac{t}{2}}\,\, \frac{G(z,x,y)}{(z-t)^{k+1}}\d z ,
\end{equation}
which further yields the following Gaussian pointwise estimate for the time derivatives of Green's function (cf. \cite[Corollary 5]{Davies97} and \cite[Appendix B with $\alpha=\beta=0$]{Geissert06}): 
\begin{align}
&|\partial_t^kG(t,x,x_0)|\leq \frac{C_k}{t^{k+d/2}} e^{-\frac{|x-x_0|^2}{C_kt}}, &&
\forall\, x,x_0\in\Omega,\,\,\, \forall\, t>0, \,\, k=0,1,2,\dots \label{GausEst1}
\end{align}

Let $\Gamma=\Gamma(\cdot,\cdot\, , x_0)$ 
be the regularized Green's function
of the parabolic equation, defined by 
\begin{align}\label{RGFdef}
\left\{\begin{array}{ll}
\partial_t\Gamma(\cdot,\cdot\, , x_0)-\Delta\Gamma(\cdot,\cdot\, , x_0)=0
&\mbox{in}\,\,\,(0,T]\times \Omega,\\
\Gamma(\cdot,\cdot\, , x_0) = 0
&\mbox{on}\,\,\, (0,T]\times \partial\Omega ,\\
\Gamma(0,\cdot ,x_0)=\tilde\delta_{x_0} 
&\mbox{in}\,\,\,\Omega,
\end{array}\right.
\end{align}
and let $\Gamma_h=\Gamma_h(\cdot,\cdot,  x_0)$ 
be the finite element approximation of $\Gamma$,
defined by
\begin{align}\label{EqGammh}
\left\{\begin{array}{ll}
(\partial_t\Gamma_h(t,\cdot,x_0),v_h)+(\nabla\Gamma_h(t,\cdot,x_0),\nabla v_h)=0 ,
&\forall \, v_h\in \mathring S_h,\,\, t\in(0,T),\\[5pt]
\Gamma_h(0,\cdot,x_0)=  \delta_{h,x_0} .
\end{array}\right.
\end{align}

The regularized Green's function can be expressed in terms of the Green's function as follows: 
\begin{align}\label{expr-Gamma}
&\Gamma(t,x,x_0)=\int_\Omega  G(t,y,x)\tilde\delta_{x_0}(y)\d y=\int_\Omega  G(t,x,y)\tilde\delta_{x_0}(y)\d y  . 
\end{align}
From the expression in \eqref{expr-Gamma} one can easily derive that the regularized Green's function $\Gamma$ also satisfies the Gaussian pointwise estimate for $k\ge 0$:
\begin{align}
&|\partial_t^k\Gamma(t,x,x_0)|\leq \frac{C_k}{t^{k+d/2}} e^{-\frac{|x-x_0|^2}{C_kt}}, &&
\forall\, x,x_0\in\Omega,\,\,\forall\, t>0\,\,\mbox{for}\,\, \max(|x-x_0|,\sqrt{t})\ge 2h .  \label{GausEstGamma}
\end{align}

\section{Weak maximum principle of semi-discrete FEM}\label{sec: semi-discrete}
\setcounter{equation}{0}

Since the problem is linear, the first part of the result, i.e., 
$
\|u_h\|_{L^\infty(0,T;L^\infty(\Omega))}
\le C\|u_h^0\|_{L^\infty(\Omega)}, 
$
follows from the pointwise stability of the semi-group established in \cite{Li19}. To establish the second part of the result, i.e., 
$
\|u_h\|_{L^\infty(0,T;L^\infty(\Omega))}
\le C\|u_h^0\|_{L^\infty(\Omega)} +
C\|g_h\|_{L^\infty(0,T;L^\infty(\partial\Omega))} , 
$
we will use the technique based on the dyadic decomposition and local energy estimates.

\subsection{Dyadic decomposition of the domain ${\mathcal Q}=(0,1)\times\Omega$}
\label{SecGF} 
In the proof of Theorem \ref{thm:semi-WMP}, we need to partition the domain ${\mathcal Q}=\Omega_T$ with $T=1$, i.e. ${\mathcal Q}=(0,1)\times\Omega$, into subdomains, and present estimates of the finite element solutions in each subdomain. The following dyadic decomposition of ${\mathcal Q}$ was introduced in \cite{Schatz98}. 

For any integer $j$, we define $d_j=2^{-j}$.  For a given $x_0\in\Omega$, 
we let $J_1=1$, $J_0=0$ and $J_*$ be an integer satisfying $2^{-J_*}= C_*h$ 
with $C_*\geq 16$ to be determined later.  
If 
\begin{align}\label{h-condition}
h<1/(4C_*)  ,
\end{align}
then 
\begin{align}
2\leq J_*=\log_2[1/(C_*h)]\leq \log_2(2+1/h) .
\end{align} 
Let  
\begin{align*}
&Q_*(x_0)=\{(x,t)\in\Omega_T: \max (|x-x_0|,t^{1/2})\leq d_{J_*}\}, 
\\
&\Omega_*(x_0)=\{x\in \Omega: |x-x_0|\leq d_{J_*}\} \, . 
\end{align*} 
We define 
\begin{align*} 
&Q_j(x_0)=\{(x,t)\in \Omega_T:d_j\leq
\max (|x-x_0|,t^{1/2})\leq2d_j\}  &&\mbox{for}\,\,\, j\ge 1,
\\
&\Omega_j(x_0)=\{x\in \Omega: d_j\leq|x-x_0|\leq2d_j\}   
&&\mbox{for}\,\,\, j\ge 1,
\\
&D_j(x_0)=\{x\in \Omega:  |x-x_0|\leq2d_j\}  
&&\mbox{for}\,\,\, j\ge 1,
\end{align*} 
and 
\begin{align*} 
&Q_0(x_0)= 
{\mathcal Q}\big\backslash\big( \cup_{j=1}^{J_*}Q_{j}(x_0)\cup Q_*(x_0)\big) , 
\\
&\Omega_0(x_0) 
=\Omega\big\backslash\big( \cup_{j=1}^{J_*}\Omega_{j}(x_0)\cup \Omega_*(x_0)\big).
\end{align*}
For $j<0$, we simply define
$Q_{j}(x_0)=\Omega_{j}(x_0)=\emptyset$.
For all integer $j\ge 0$, we define
\begin{align*}
\Omega_j'(x_0)&=\Omega_{j-1}(x_0)\cup\Omega_{j}(x_0)\cup\Omega_{j+1}(x_0),
\quad &Q_j'(x_0)=Q_{j-1}(x_0)\cup Q_{j}(x_0)\cup Q_{j+1}(x_0), \\
\Omega_j''(x_0)&=\Omega_{j-2}(x_0)\cup\Omega_{j}'(x_0)\cup\Omega_{j+2}(x_0),
\quad
&Q_j''(x_0)=Q_{j-2}(x_0)\cup Q_{j}'(x_0)\cup Q_{j+2}(x_0),\\
D_j'(x_0)&=D_{j-1}(x_0)\cup D_{j}(x_0),
 &D_j''(x_0)=D_{j-2}(x_0)\cup D_{j}'(x_0).
\end{align*}
Then we have
\begin{align}\label{decomposition}
&\Omega_T=\bigcup^{J_*}_{j=0}Q_j(x_0)\,\cup Q_*(x_0)
\quad\mbox{and}\quad
\Omega=\bigcup^{J_*}_{j=0}\Omega_j(x_0)\,\cup \Omega_*(x_0) .
\end{align}
We refer to $Q_*(x_0)$ as the "innermost" set.
We shall write $\sum_{*,j}$ when the innermost set is included and
$\sum_j$ when it is not. When $x_0$ is fixed, if there is no ambiguity, 
we simply write
$Q_j=Q_j(x_0)$, $Q_j'=Q_j'(x_0)$, $Q_j''=Q_j''(x_0)$, 
$\Omega_j=\Omega_j(x_0)$, $\Omega_j'=\Omega_j'(x_0)$ and 
$\Omega_j''=\Omega_j''(x_0)$.

We shall use the notations 
\begin{align}\label{Q-norm}
&\|v\|_{k,D}=\biggl(\int_D\sum_{|\alpha|\leq k}|\partial^\alpha v|^2\d x\biggl)^{\frac{1}{2}},\qquad
\vertiii{v}_{k,Q}=\biggl(\int_Q\sum_{|\alpha|\leq k}|\partial^\alpha v|^2\d x\d t\biggl)^{\frac{1}{2}} ,
\end{align}
for any subdomains $D\subset\Omega$ and $Q\subset (0,1)\times\Omega $.
Throughout this paper, we denote by $C$ a generic positive constant that 
is independent of $h$, $x_0$ and $C_*$ (until $C_*$ is determined 
in Section \ref{ProofLm03}). To simplify the notations, we also denote $d_*=d_{J_*}$.

\subsection{Technical lemmas}

Let $\Omega$ be a polygon in $\R^2$ or a polyhedron in $\R^3$ {\rm(}possibly nonconvex{\rm)}, and let $\mathring S_h$, $0<h<h_0$, be a family of finite element subspaces of $H^1_0(\Omega)$ consisting of piecewise polynomials of degree $r\ge 1$ subject to a quasi-uniform triangulation of the domain $\Omega$ (with mesh size $h$). 

The first two of the following three technical lemmas were proved in \cite[Lemma 4.1, Lemma 4.4]{Li19} for general polyhedron that may be nonconvex. In the case $\Omega$ is a convex polyhedron, $\alpha$ can be slightly bigger than $1$. 
We include this slightly different result for the convex case and omit the proof, which is almost the same as the proof for general polyhedra except some minor difference. 

\begin{lemma}\label{GFEst1}
{\it There exist $\alpha\in (\frac{1}{2},2]$ and $C>0$, 
	independent of $h$ and $x_0$, such that 
	the Green's function $G$ defined in 
	\eqref{GFdef} and the regularized Green's function 
	$\Gamma$ defined in \eqref{RGFdef}
	satisfy the following estimates:
	\begin{align}
		&d_j^{-4-\alpha+d/2}\|\Gamma(\cdot,\cdot,x_0)\|_{L^\infty(Q_j(x_0))}
		+d_j^{-4-\alpha}\vertiii{\nabla\Gamma(\cdot,\cdot,x_0)}_{L^2(Q_j(x_0))} \notag\\
		&
		+d_j^{-4}\vertiii{ \Gamma(\cdot,\cdot,x_0)}_{L^2H^{1+\alpha}(Q_j(x_0))}  +d_j^{-2}\vertiii{ \partial_{t} 
			\Gamma(\cdot,\cdot,x_0)}_{L^2H^{1+\alpha}(Q_j(x_0))} \notag\\
		& 
		+\vertiii{ \partial_{tt}\Gamma(\cdot,\cdot,x_0) }_{L^2H^{1+\alpha}(Q_j(x_0))}
		\leq Cd_j^{-d/2-4-\alpha}, \label{GFest01}\\[10pt]
		&\|G(\cdot,\cdot,x_0) \|_{L^{\infty}H^{1+\alpha}(\cup_{k\leq
				j}Q_k(x_0))}
		+d_j^2\|\partial_tG(\cdot,\cdot,x_0) \|_{L^{\infty}H^{1+\alpha}(\cup_{k\leq
				j}Q_k(x_0))}\leq Cd_j^{-d/2-1-\alpha} \label{GFest03} .
	\end{align}
	
	In the case $d=2$, $\alpha\in(\frac12,1]$. In the case $d=3$ and $\Omega$ is convex, $\alpha\in(1,2]$. 
}
\end{lemma}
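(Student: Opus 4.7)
The plan is to derive every bound from two ingredients: the pointwise Gaussian estimates \eqref{GausEst1}, \eqref{GausEstGamma} for $G$, $\Gamma$ and their time derivatives, together with local elliptic regularity for $-\Delta$ on $\Omega$ with zero Dirichlet data. Fix $j\le J_*$; by the definition of $J_*$, on $Q_j(x_0)$ one has $\max(|x-x_0|,\sqrt{t})\ge d_j\ge C_*h/2\ge 2h$, so \eqref{GausEstGamma} applies and yields $|\partial_t^k\Gamma(t,x,x_0)|\le C d_j^{-d-2k}$ on $Q_j$. The $\|\Gamma\|_{L^\infty(Q_j)}$ term in \eqref{GFest01} follows immediately; integrating over $Q_j$ (whose space-time measure is comparable to $d_j^{d+2}$), and differentiating the Gaussian bound in $x$ to handle $\nabla\Gamma$, produces the $L^2(Q_j)$ estimates on $\partial_t^k\Gamma$ and $\nabla\Gamma$ with exactly the required powers of $d_j$.

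The core step is lifting these time-derivative bounds to $L^2H^{1+\alpha}$ estimates in space. For $w=\partial_t^k\Gamma$, I would pick a smooth cut-off $\omega$ equal to $1$ on $\Omega_j$, supported in $\Omega_j'$, with $|\nabla^\ell\omega|\le C d_j^{-\ell}$. Then $\omega w$ vanishes on $\partial\Omega$ and satisfies
\begin{align*}
-\Delta(\omega w)=-\omega\,\partial_t^{k+1}\Gamma-2\nabla\omega\cdot\nabla w-w\,\Delta\omega.
\end{align*}
Applying elliptic regularity for $-\Delta$ with zero Dirichlet data on the whole polygon/polyhedron $\Omega$ to $\omega w$ gives
\begin{align*}
\|w\|_{H^{1+\alpha}(\Omega_j)}\le C\bigl(\|\partial_t^{k+1}\Gamma\|_{H^{-1+\alpha}(\Omega_j')}+d_j^{-1}\|\nabla w\|_{L^2(\Omega_j')}+d_j^{-2}\|w\|_{L^2(\Omega_j')}\bigr),
\end{align*}
and squaring and integrating over the time slab of $Q_j$ (length $\approx d_j^2$) together with the Gaussian bounds of the previous paragraph deliver the powers $d_j^{-d/2-2-2k-\alpha}$ in \eqref{GFest01}. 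The bounds on $G$ in \eqref{GFest03} follow from running the same cut-off/elliptic scheme directly on $G$ and $\partial_tG$, but with a supremum over $t\in(0,1)$ in place of an $L^2$ average — the Gaussian estimate \eqref{GausEst1} controls the required spatial $L^2$ norms uniformly in $t$ on the slices of $\cup_{k\le j}Q_k$, where $(x,t)$ stays away from the heat-kernel singularity, and the extra $d_j^2$ on the $\partial_tG$ line reflects the $d_j^{-2}$ cost of one additional time derivative.

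The only non-routine input, and the main obstacle, is the sharp admissible range of $\alpha$: it is exactly the elliptic regularity exponent for $-\Delta$ with zero Dirichlet data on $\Omega$. For a (possibly nonconvex) polygon in $\R^2$, Grisvard's corner analysis yields $H^{1+\alpha}$-regularity for any $\alpha<\pi/\omega_{\max}$ with $\omega_{\max}<2\pi$, so $\alpha$ may be taken in $(1/2,1]$. For a convex polyhedron in $\R^3$, standard $H^2$-regularity for convex domains, sharpened into local form as in \cite{Li19}, pushes the exponent into $(1,2]$. This is the \emph{only} point at which the convex 3D case differs from the general polyhedral result of \cite[Lemma 4.1]{Li19}; all cut-off, commutator, and dyadic-scaling manipulations are identical, which is why the authors remark that the proof is essentially the same and omit it.
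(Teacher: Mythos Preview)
Your approach is correct and follows exactly the route of \cite[Lemma~4.1]{Li19}, which the paper cites without reproducing: pointwise Gaussian bounds on $\partial_t^k\Gamma$ over $Q_j$ combined with a cut-off plus elliptic regularity to lift to $H^{1+\alpha}$, the admissible range of $\alpha$ being the only place where the 3D convexity assumption enters.

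One phrasing needs correction: you cannot ``differentiate the Gaussian bound in $x$'' to control $\nabla\Gamma$, since a pointwise upper bound on a function does not bound its derivative, and the paper only records Gaussian estimates \eqref{GausEst1}--\eqref{GausEstGamma} for \emph{time} derivatives. The proper substitute is the parabolic Caccioppoli inequality on $Q_j'$, which yields
\[
\vertiii{\nabla\partial_t^k\Gamma}_{L^2(Q_j)}\le C\bigl(d_j^{-1}\vertiii{\partial_t^k\Gamma}_{L^2(Q_j')}+d_j\,\vertiii{\partial_t^{k+1}\Gamma}_{L^2(Q_j')}\bigr)\le Cd_j^{-d/2-2k},
\]
and this is precisely the quantity $d_j^{-1}\|\nabla w\|_{L^2(\Omega_j')}$ that your commutator step requires on the right-hand side. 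With that replacement the argument is complete for $\alpha\le 1$; for the 3D convex case with $\alpha\in(1,2]$ the right-hand side norm $H^{\alpha-1}$ is stronger than $L^2$, so the commutator term $\nabla\omega\cdot\nabla w$ needs $\nabla w\in H^{\alpha-1}$ locally, which is handled by first running the scheme at $\alpha=1$ and then bootstrapping once.
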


In the rest of this paper, we denote by $\alpha$ a number depending on which case we are working on, i.e., $\alpha\in(\frac12,1]$ in the case $d=2$, and $\alpha\in(1,2]$ in the case $d=3$ and $\Omega$ is convex. 

\begin{lemma}\label{LemGm2}
{\it
	The functions $\Gamma_h(t,x,x_0)$, $\Gamma(t,x,x_0)$ and 
	$F(t,x,x_0):=\Gamma_h(t,x,x_0)-\Gamma(t,x,x_0)$ satisfy 
	\begin{align}
		&\sup_{t\in(0,\infty)}\,\left(\|\Gamma_h(t,\cdot, x_0)\|_{L^1(\Omega)}  
		+t\|\partial_{t}\Gamma_h(t,\cdot, x_0)\|_{L^1(\Omega)} \right) \leq C , 
		\label{L1Gammh}\\ 
		&\sup_{t\in(0,\infty)}\,\left(\|\Gamma(t,\cdot, x_0)\|_{L^1(\Omega)}  
		+t\|\partial_{t}\Gamma(t,\cdot, x_0)\|_{L^1(\Omega)} \right) \leq C , 
		\label{L1Gammh-2}\\ 
		&\|\partial_tF(\cdot,\cdot ,x_0)\|_{L^1((0,\infty)\times\Omega)}  
		+\|t\partial_{tt}F(\cdot,\cdot , x_0)\|_{L^1((0,\infty)\times\Omega)} \leq C ,
		\label{L1Ft}\\
		&
		\|\partial_t\Gamma_h(t,\cdot, x_0)\|_{L^1}\leq Ce^{-\lambda_0t} , \qquad\forall\, t\ge 1, \label{L1Gammatx0}
	\end{align} 
	where the constants $C$ and $\lambda_0$ are independent of $h$. 
}
\end{lemma}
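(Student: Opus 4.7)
The plan is to establish the four estimates in turn, with estimate (3) being the main challenge. For the $L^1$ bounds (1) and (2) on the regularized Green's functions themselves and their first time derivatives, in the continuous case I would write $\Gamma(t,\cdot,x_0)=\int_\Omega G(t,\cdot,y)\tilde\delta_{x_0}(y)\,\d y$ via \eqref{expr-Gamma}, and combine the positivity and mass preservation of $G$ with the uniform bound $\|\tilde\delta_{x_0}\|_{L^1}\le C$ from \eqref{reg-Delta-est-2}, using \eqref{GausEst1} for the $t^{-1}$ smoothing of the time derivative. In the discrete case, $\|\delta_{h,x_0}\|_{L^1}\le C$ follows from the pointwise decay \eqref{Detal-pointwise}; the $L^\infty$ stability and analyticity of the semi-discrete heat semigroup established in \cite{Li19}, transferred to $L^1$ by duality (since the discrete Laplacian is self-adjoint with respect to the $L^2$ inner product on $\mathring S_h$), then yield both $\|e^{t\Delta_h}v_h\|_{L^1}\lesssim\|v_h\|_{L^1}$ and the smoothing estimate $\|\partial_t e^{t\Delta_h}v_h\|_{L^1}\lesssim t^{-1}\|v_h\|_{L^1}$.

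For the exponential decay (4), the smallest eigenvalue $\lambda_{h,1}$ of $-\Delta_h$ on $\mathring S_h$ is bounded below uniformly in $h$ by a fixed $\lambda_0>0$, because $\mathring S_h\subset H^1_0(\Omega)$ and the Poincar\'e constant of $\Omega$ is finite. For $t\ge 1$ I would write $\partial_t\Gamma_h(t)=e^{(t-1/2)\Delta_h}\partial_t\Gamma_h(1/2)$, extract an $L^2$ exponential decay factor $e^{-\lambda_0(t-1/2)}$ from the spectral decomposition, and close the chain back to $L^1$ via the discrete $L^p$ ultracontractivity between $L^1$, $L^2$ and $L^\infty$ furnished by \cite{Li19}, using (1) to initialize at $t=1/2$.

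The main obstacle is estimate (3). Since the initial error $F(0)=\delta_{h,x_0}-\tilde\delta_{x_0}=(P_h-\mathrm{id})\tilde\delta_{x_0}$ is controlled only in the very weak $L^1$ norm and the domain is nonsmooth, neither standard global energy arguments nor negative-norm error estimates apply directly. My plan is to employ the dyadic decomposition of ${\mathcal Q}=(0,1)\times\Omega$ from Section \ref{SecGF} around $x_0$. On each block $Q_j$ outside the innermost set, Cauchy--Schwarz reduces the $L^1$ norm to the $L^2$ norm weighted by $|Q_j|^{1/2}\sim d_j^{(d+2)/2}$; local $L^2$ energy estimates of Schatz--Wahlbin type (using the superapproximation property (P3)) then control $\vertiii{\partial_t F}_{0,Q_j''}$ and $\vertiii{\partial_{tt}F}_{0,Q_j''}$ by norms of $\Gamma$ which are majorized through the Green's function bounds of Lemma \ref{GFEst1}. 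Tracking the exponents produces dyadic contributions of order $d_j^{\alpha-1/2}$, whose geometric sum converges precisely because $\alpha>\tfrac12$. On the innermost block $Q_*$, the volume bound $|Q_*|\lesssim h^{d+2}$ combined with (1) and (2) absorbs the remaining contribution. Finally, the tail $t>1$ is handled by applying (4) to $\Gamma_h$ and the analogous decay for $\Gamma$ via \eqref{GausEst1}. The threshold $\alpha>\tfrac12$ thus emerges as sharp, in agreement with the remark made in the introduction.
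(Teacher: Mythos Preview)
The paper does not give its own proof of this lemma; it simply cites \cite[Lemma~4.4]{Li19}, and the machinery you describe---dyadic decomposition of $(0,1)\times\Omega$ around $x_0$, local energy estimates via superapproximation, Green's function bounds from Lemma~\ref{GFEst1}, and global energy estimates on the innermost block---is precisely the strategy employed there (and echoed in Section~\ref{ProofLm03} of the present paper for the closely related quantity $\mathscr{K}$).

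One point your sketch glosses over: the local energy estimate (Lemma~\ref{LocEEst}) does \emph{not} bound $\vertiii{\partial_t F}_{Q_j}$ purely by norms of $\Gamma$. Its right-hand side contains, besides the consistency terms $I_j,X_j$ that you identify, a lower-order term $d_j^{-2}\vertiii{F}_{Q_j'}$ and a small multiple of the same quantity on the enlarged block $Q_j'$. Handling these requires (i) an absorption/kickback argument after summing over $j$ with the appropriate weights, and (ii) a separate duality-type estimate for $\vertiii{F}_{Q_j}$ (this is \cite[(5.32)]{Li19}, quoted here as \eqref{FL2Qj}), which is where the threshold $\alpha>\tfrac12$ actually enters. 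Your plan will succeed once these two ingredients are inserted; without them the dyadic sum does not close. Your treatments of \eqref{L1Gammh}, \eqref{L1Gammh-2}, and \eqref{L1Gammatx0} via $L^\infty$--$L^1$ duality of the discrete semigroup and the uniform spectral gap are correct and standard.
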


\begin{lemma}\label{lemma:three}
	The following results hold:
	\begin{enumerate}
		\item
		$\| \Gamma(t) \|_{L^2 (\frac14, \infty; H^{1+\alpha}(\Omega))} + \| \partial_t \Gamma(t) \|_{L^2 (\frac14, \infty; H^{1+\alpha}(\Omega))} \le C$.
		
		\item
		$(-\Delta_h)^{-1}:(\mathring S_h(\Omega), \|\cdot\|_{L^1(\Omega)}) \rightarrow (\mathring S_h(\Omega), \|\cdot\|_{L^2(\Omega)})$ is continuous.
		
		\item
		$\| \nabla (\Gamma_h-\Gamma) \|_{L^1(0,1; L^1(\Omega))}\le C h^{\frac12}$.
	\end{enumerate}
\end{lemma}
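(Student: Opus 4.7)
\textit{Parts (1) and (2).} For (1), I would combine Lemma \ref{GFEst1} on the outer dyadic shells $Q_0(x_0)\cup Q_1(x_0)$, which cover $\{(x,t)\in\Omega_T:\ t\in[\tfrac14,1]\}$, with the spectral-gap decay of the Dirichlet heat semigroup on the bounded $\Omega$ for $t\ge 1$; the latter gives $\|\Gamma(t)\|_{H^{1+\alpha}}+\|\partial_t\Gamma(t)\|_{H^{1+\alpha}}\le Ce^{-\lambda_1(t-1/2)}$ (by smoothing from $t=\tfrac12$ using the Gaussian kernel estimate \eqref{GausEstGamma}), which is trivially $L^2$-integrable on $(1,\infty)$. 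For (2), the plan is duality with the Ritz projection. Given $\phi_h\in\mathring S_h$ and $\psi_h:=(-\Delta_h)^{-1}\phi_h$, for any $w\in L^2(\Omega)$ let $\eta:=(-\Delta)^{-1}w\in H^{1+\alpha}(\Omega)\cap H^1_0(\Omega)$ by elliptic regularity; Galerkin orthogonality yields
\[
(\psi_h,w)=(\nabla\psi_h,\nabla\eta)=(\nabla\psi_h,\nabla R_h\eta)=(\phi_h,R_h\eta)\le\|\phi_h\|_{L^1}\|R_h\eta\|_{L^\infty},
\]
and the elliptic weak maximum principle of \cite{Schatz80,Leykekhman_Li_2021} combined with the Sobolev embedding $H^{1+\alpha}\hookrightarrow C(\overline\Omega)$ (valid since $1+\alpha>d/2$ in both the 2D and 3D-convex case) closes the argument.

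\textit{Part (3) -- main part.} I would split $F:=\Gamma_h-\Gamma=\theta+\rho$ with $\theta:=\Gamma_h-R_h\Gamma\in\mathring S_h$ and $\rho:=R_h\Gamma-\Gamma$. The $\rho$ part is handled by dyadic summation: on each shell $Q_j(x_0)$, the Ritz bound \eqref{eq:Rh_app} combined with Lemma \ref{GFEst1} and local H\"older yields
\[
\int_{Q_j}|\nabla\rho|\,dx\,dt\le|Q_j|^{1/2}\vertiii{\nabla\rho}_{L^2(Q_j)}\le Ch^\alpha d_j^{1-\alpha},
\]
summation over $j=0,\ldots,J_*$ gives $\le Ch^\alpha$ (thanks to $\alpha>\tfrac12$), and the innermost set $Q_*(x_0)$ is controlled by the Gaussian estimate for $\nabla\Gamma$ and the inverse inequality plus $L^1$-stability for $\nabla R_h\Gamma$, contributing at most $Ch$. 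The $\theta$ part satisfies $\partial_t\theta-\Delta_h\theta=-P_h\partial_t\rho$ with initial value $\theta(0)=P_h\tilde\delta_{x_0}-R_h\tilde\delta_{x_0}$; by Duhamel
\[
\theta(t)=e^{t\Delta_h}\theta(0)-\int_0^t e^{(t-s)\Delta_h}P_h\partial_t\rho(s)\,ds,
\]
and the convolution term is bounded by a dyadic argument analogous to the one for $\rho$, using the $L^2H^{1+\alpha}$ bound of Lemma \ref{GFEst1} for $\partial_t\Gamma$ together with the smoothing of the discrete heat semigroup.

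\textit{Main obstacle.} The hardest point is the initial-value contribution $e^{t\Delta_h}\theta(0)$: $\theta(0)$ is essentially a discrete delta, and is uniformly bounded only in $L^1$ (via \eqref{reg-Delta-est-2} and the $L^1$-stability of $P_h,R_h$, itself dual to the elliptic weak maximum principle), while any stronger norm blows up with $h$. A direct $L^2$-energy estimate therefore loses a factor $h^{-d/2}$, and the naive $L^1$-Gaussian heat-kernel bound $\|\nabla e^{t\Delta_h}\|_{L^1\to L^1}\le Ct^{-1/2}$ alone gives only an $O(1)$ bound. The resolution is exactly where Part (2) of this lemma is used: $\|(-\Delta_h)^{-1}\theta(0)\|_{L^2}\le C\|\theta(0)\|_{L^1}\le C$. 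Combining this bound with the analytic-semigroup smoothing $\|(-\Delta_h)^{3/2}e^{t\Delta_h}\|_{L^2\to L^2}\le Ct^{-3/2}$ via the identity $\nabla e^{t\Delta_h}\theta(0)=\nabla(-\Delta_h)e^{t\Delta_h}(-\Delta_h)^{-1}\theta(0)$, and handling the range $t\le h^2$ by the inverse inequality, a local H\"older summation on dyadic shells produces the $Ch^{1/2}$ factor. The lower bound $\alpha>\tfrac12$ is sharp in this argument: it is precisely what makes the dyadic sums $\sum_{j}d_j^{1-\alpha}$ converge so that the overall rate is $Ch^{\min(\alpha,1/2)}=Ch^{1/2}$.
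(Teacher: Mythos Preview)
Your approaches to Parts (1) and (2) are correct and are valid alternatives to the paper's. For (1) the paper uses an energy-decay estimate from \cite{Li19} for $t\ge\tfrac14$ combined with elliptic regularity $\|\Gamma\|_{H^{1+\alpha}}\le C\|\Delta\Gamma\|_{L^2}=C\|\partial_t\Gamma\|_{L^2}$; your route via Lemma~\ref{GFEst1} on the outer shells plus spectral decay is equivalent. For (2) the paper argues directly via the $W^{1,p}$-stability ($p=2d/(d+2)$) of the Ritz projection and elliptic regularity in $W^{1,p}$ on corner domains; your duality argument through $\|R_h\eta\|_{L^\infty}$ is cleaner and has the virtue of showing precisely why the elliptic weak maximum principle enters.

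For Part (3), however, your plan has a genuine gap and diverges substantially from the paper. Your shell bound $\vertiii{\nabla\rho}_{L^2(Q_j)}\le Ch^\alpha d_j^{-d/2-\alpha}$ cannot come from the \emph{global} Ritz estimate \eqref{eq:Rh_app} combined with the \emph{local} regularity of Lemma~\ref{GFEst1}: global Ritz needs $\|\Gamma(t)\|_{H^{1+\alpha}(\Omega)}$, which for $t$ in the time support of $Q_j$ (reaching down to $t=0$, since $\max(|x-x_0|,\sqrt t)\in[d_j,2d_j]$ allows any small $t$ once $|x-x_0|\ge d_j$) is far larger than $d_j^{-d/2-\alpha}$. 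A local Ritz estimate of Schatz--Wahlbin type would give the interpolation term you want but introduces a pollution term $d_j^{-1}\vertiii{\rho}_{Q_j'}$, and absorbing \emph{that} across all shells is precisely the difficulty your splitting was meant to sidestep. The same issue recurs for the initial-value contribution $e^{t\Delta_h}\theta(0)$: passing from $\|(-\Delta_h)^{-1}\theta(0)\|_{L^2}\le C$ to a global bound $\|\nabla e^{t\Delta_h}\theta(0)\|_{L^2}\le Ct^{-3/2}$ and integrating over $(h^2,1)$ yields $O(h^{-1})$, not $O(h^{1/2})$; the ``local H\"older summation on dyadic shells'' you invoke would need a \emph{localized} $L^2$ bound on $e^{t\Delta_h}\theta(0)$ that you have not supplied.

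The paper's route is to avoid the $\theta+\rho$ splitting entirely and instead bound $\vertiii{F}_{1,Q_j}$ directly via the local energy error estimate (Lemma~\ref{LocEEst}), exactly as in the proof of \cite[Lemma~4.4]{Li19} but with the rescaled weight $d_j^{-1/2}h^{-1/2}$ in front of $\vertiii{F}_{1,Q_j}$ in the definition of $\mathcal K$. The point is that the local energy estimate already produces a self-referential inequality in which the $Q_j'$ terms are absorbed into $\mathcal K$ after choosing $C_*$ large; the consistency terms carry at worst $h^\alpha$ and $h^{1+\alpha-d/2}$, both of which survive multiplication by $h^{-1/2}$ thanks to $\alpha>\tfrac12$ (2D) and $\alpha>1$ (3D convex). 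This is where the ``$\alpha>\tfrac12$ is sharp'' observation really belongs---it controls the consistency side of the local energy argument, not a dyadic sum of $d_j^{1-\alpha}$ as in your sketch.
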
 
\begin{proof}
	To show the first result, we recall the estimate in \cite[p. 27]{Li19} which can be proved by the standard energy method. Here we additionally allow $t\geq \frac14$ instead of $t\geq 1$ in \cite[p. 27]{Li19}:
	\begin{align}
		&\| \partial_{t} \Gamma(t,\cdot, x_0) \|_{L^2}^2
		+
		\| \partial_{t} \Gamma_h(t,\cdot, x_0) \|_{L^2}^2
		+
		\| \partial_{tt} \Gamma(t,\cdot, x_0) \|_{L^2}^2
		+
		\| \partial_{tt} \Gamma_h(t,\cdot, x_0) \|_{L^2}^2 \notag\\
		&\leq C e^{-\lambda_0 (t - \frac14)} . \notag
	\end{align}
	Therefore from elliptic regularity theory, we get the following boundedness results
	\begin{align}
		&\| \Gamma(t) \|_{L^2 (\frac14, \infty; H^{1+\alpha})} \leq C \| \Delta \Gamma(t) \|_{L^2 (\frac14, \infty; L^{2})} = C \| \partial_{t} \Gamma(t) \|_{L^2 (\frac14, \infty; L^{2})} \leq C , \notag\\
		&\| \partial_t \Gamma(t) \|_{L^2 (\frac14, \infty; H^{1+\alpha})} \leq C \| \Delta \partial_t \Gamma(t) \|_{L^2 (\frac14, +\infty; L^{2})} = C \| \partial_{tt} \Gamma(t) \|_{L^2 (\frac14, +\infty; L^{2})} \leq C . \notag
	\end{align}
	
	To prove the second one, for any given $f_h\in \mathring S_h(\Omega)$, we define $u_h = (-\Delta_h)^{-1} f_h$ and $u\in H_0^1(\Omega)$ to be the solution of the elliptic equation
	\begin{align}
		-\Delta u = f_h . \notag
	\end{align}
	By the definition we have the relation $u_h = R_h u$. For $d = 2, 3$, fix $p=2^*=\frac{2d}{d+2}$ whose H\"older conjugate is $p^\prime = \frac{2d}{d-2}$, and we apply the Sobolev embedding $W^{1,p} \hookrightarrow L^2$, the stability of Ritz projection $R_h$ (see \cite[equation (2.5)]{Leykekhman_Li_2021}), the elliptic regularity theory on corner domain (see \cite[Theorem 3.2, Corollary 3.10, Corollary 3.12]{Dauge92}) and the embedding $W^{1,p^\prime} \hookrightarrow L^\infty$ successively to derive
	\begin{align}
		\| (-\Delta_h)^{-1} f_h \|_{L^2} 
		=\| u_h \|_{L^2} 
		\leq C \| u_h \|_{W^{1, p}} 
		\leq C \| u \|_{W^{1, p}}
		\leq C \| f_h \|_{W^{-1, p}}
		\leq C \| f_h \|_{L^1} . \notag
	\end{align}
	For the third result, we modify the quantity $\mathcal K$ in the proof of \cite[Lemma 4.4]{Li19} to be
	\begin{align}
		{\mathcal K}  :&=
		\sum_{j} d_{j}^{1+d/2}\left(d_j^{-1/2} h^{-1/2}\vertiii{F}_{1,Q_j}
		+\vertiii{\partial_tF}_{Q_j}
		+d_{j}\vertiii{\partial_tF}_{1,Q_j}+d_{j}^2\vertiii{\partial_{tt}F}_{Q_j}\right)  , \notag
	\end{align} 
	and then the same proof will show $\mathcal K \leq C$. Sketch of the proof: The lowest powers of $h$ in the consistency part in the proof of \cite[Lemma 4.4]{Li19} are $h^\alpha$ (see \cite[equation (5.14) and (5.15)]{Li19}) and $h^{1+\alpha-d/2}$ (see \cite[equation (5.32)]{Li19}). Therefore if we multiply $h^{-1/2}$ in the front, we get $h^{-1/2+\alpha} + h^{1/2+\alpha - d/2} \leq C$. For the stability part, as \cite[equation (5.32)]{Li19}, we will get an additional power of $h^{-1/2}h^{1+\alpha}=h^{1/2+\alpha}$ in front of $\vertiii{ F_t }_{Q_i}$ and an additional power of $h^{\alpha}$ in front of $\vertiii{ F}_{1, Q_i}$ (note that according to the definition of $\mathcal K$, there already exists $h^{-1/2}$ in front of $\vertiii{ F}_{1, Q_i}$ on the left hand side). Such additional positive powers of $h$ ensure the stability of $\mathcal K$.
\end{proof}

\subsection{Reduction of the problem}\label{sec:proof2.3}
$\,$
To simplify the notation, we relax the dependence of the Green's functions on $x_0$ and denote 
$$
\Gamma_h(t)=\Gamma_h(t,\cdot,  x_0),\qquad
\Gamma(t)=\Gamma(t,\cdot,  x_0)\quad\mbox{and}\quad
F(t)=\Gamma_h(t)-\Gamma(t) . 
$$ 
If $u_h$ is the finite element solution of
\begin{align}
\left\{
\begin{aligned}
	&(\partial_tu_h,v_h) + (\nabla u_h,\nabla v_h) = 0 &&\forall\, v_h\in \mathring S_h,\,\,\,\mbox{for}\,\,\,t\in (0,T] , \\
	&u_h = g_h &&\mbox{on}\,\,\,[0,T]\times\partial\Omega, \\
	&u_h(0)=0&&\mbox{in}\,\,\,\Omega , 
\end{aligned}
\right.
\end{align}
then substituting $v_h=\Gamma_h(s-t)$, with $0<t<s\le T$, and integrating the result for $t\in(0,s)$ yield 
\begin{align}
u_h(s,x_0) 
&= -\int_0^s \Big[ (u_h,\partial_t\Gamma_h(s-t)) + (\nabla u_h,\nabla \Gamma_h(s-t)) \Big]\d t \notag\\
&= - \int_0^s \Big[ (u_h-w_h,\partial_t\Gamma_h(s-t)) + (\nabla (u_h-w_h),\nabla \Gamma_h(s-t)) \Big]\d t  ,
\end{align}
which holds for all $w_h\in L^2(0,T;\mathring S_h)$, 
where the last inequality is a consequence of \eqref{EqGammh}. 
Let $w_h\in\mathring S_h $ be the finite element function which equals $u_h$ at interior nodes and equals $0$ on $\partial\Omega$. Then
\begin{align}
u_h(s,x_0) 
&= -\int_0^s \Big[ (\tilde{g}_h,\partial_t\Gamma_h(s-t)) + (\nabla \tilde{g}_h,\nabla \Gamma_h(s-t)) \Big]\d t  ,
\end{align}
where $\tilde g_h$ is the finite element solution which equals $g_h$ on $\partial\Omega$ and equals zero at the interior nodes of $\Omega$.
Let $D_h=\{x\in\Omega:{\rm dist}(x,\partial\Omega)\le h\}$. Then 
\begin{align}\label{u_h(s,x_0)}
u_h(s,x_0) 
&= -\int_0^s \Big[ (\tilde{g}_h,\partial_t\Gamma_h(s-t))_{D_h} + (\nabla \tilde{g}_h,\nabla \Gamma_h(s-t))_{D_h} \Big]\d t .
\end{align}
Let $\tilde u$ be the solution of the PDE problem 
\begin{align}\label{PDE-tilde-u}
\left\{
\begin{aligned}
	&\partial_t\tilde u - \Delta \tilde u= 0 &&\mbox{for}\,\,\,t\in (0,T] , \\
	&\tilde u = g_h &&\mbox{on}\,\,\,[0,T]\times\partial\Omega , \\
	&\tilde u(0)=0&&\mbox{in}\,\,\,\Omega .
\end{aligned}
\right.
\end{align}
By the maximum principle of parabolic PDEs (see \cite[Theorem 6.2.6]{Are11} and \cite[Chapter II]{Lie96}), we have 
\begin{align}
\|\tilde u\|_{L^\infty(0,T;L^\infty)} \le \|g_h\|_{L^\infty(0,T;L^\infty(\partial\Omega))} . 
\end{align}
Testing the first equation of \eqref{PDE-tilde-u} by $\Gamma(s-t)$ and integrating the result for $t\in(0,s)$, we obtain 
\begin{align}
(\tilde u(s,\cdot) ,\tilde\delta_{x_0}) 
&= -\int_0^s \Big[ (\tilde u,\partial_t\Gamma(s-t)) + (\nabla \tilde u,\nabla \Gamma(s-t)) \Big]\d t \notag\\
&= -\int_0^s \Big[ (\tilde u-w,\partial_t\Gamma(s-t)) + (\nabla (\tilde u-w),\nabla \Gamma(s-t)) \Big]\d t , \end{align}
which holds for all $w\in L^2(0,T;H^1_0(\Omega) )$. By choosing $w=\tilde u-\tilde{g}_h$ we obtain 
\begin{align}\label{u(s,x_0)}
(\tilde u(s,\cdot) ,\tilde\delta_{x_0}) 
&= -\int_0^s \Big[ (\tilde{g}_h,\partial_t\Gamma(s-t)) + (\nabla \tilde{g}_h,\nabla \Gamma(s-t)) \Big]\d t \notag \\
&= -\int_0^s \Big[ (\tilde{g}_h,\partial_t\Gamma(s-t))_{D_h} + (\nabla \tilde{g}_h,\nabla \Gamma(s-t))_{D_h} \Big]\d t .
\end{align}
Subtracting \eqref{u(s,x_0)} from \eqref{u_h(s,x_0)}, we obtain
\begin{align}\label{u_h-expr}
u_h(s,x_0) 
=(\tilde u(s,\cdot) ,\tilde\delta_{x_0})
&- \int_0^s  \big(\tilde{g}_h,\partial_t\Gamma_h(s-t) - \partial_t\Gamma(s-t) \big)_{D_h} \d t \notag\\
&- \int_0^s  \big(\nabla \tilde{g}_h,\nabla (\Gamma_h(s-t)-\Gamma(s-t)) \big)_{D_h} \d t . 
\end{align}
From \eqref{L1Ft} we know that 
$$
\int_0^s \int_\Omega |\partial_t\Gamma_h(s-t)-\partial_t\Gamma(s-t)| \d x\d t \le C ,
$$
which implies that 
\begin{align}\label{u_h-expr-2}
&\hspace{-10pt} |u_h(s,x_0) | \notag\\
\le 
&\, \|\tilde u(s) \|_{L^\infty} \| \tilde\delta_{x_0}\|_{L^1} 
+ C\|\tilde{g}_h\|_{L^\infty(0,s;L^\infty)} \notag  + \bigg|\int_0^s  \big(\nabla \tilde{g}_h,\nabla (\Gamma_h(s-t)-\Gamma(s-t)) \big)_{D_h} \d t \bigg| \notag \\
\le&\,
C\|g_h\|_{L^\infty(0,s;L^\infty(\partial\Omega))}
+ \|\tilde {g}_h\|_{L^\infty(0,s;L^\infty(\Omega))} Ch^{-1} \|\nabla F\|_{L^1(D_h^s)} \notag\\
\le&\,
C\|g_h\|_{L^\infty(0,s;L^\infty(\partial\Omega))}
(1+ h^{-1} \|\nabla F\|_{L^1(D_h^s)} ) ,
\end{align}
where $D_h^s=(0,s)\times D_h$. 
If the following estimate can be proved:
\begin{align}\label{grad-Fh-estimate}
h^{-1} \|\nabla F\|_{L^1((0,\infty)\times D_h)} \le C , 
\end{align}
then substituting \eqref{grad-Fh-estimate} into \eqref{u_h-expr-2} immediately yields
\begin{align}
|u_h(s,x_0) |
\le&\,
C\|g_h\|_{L^\infty(0,s;L^\infty(\partial\Omega))} . 
\end{align}
Since $s$ and $x_0$ can be arbitrary, this proves the desired result of Theorem \ref{thm:semi-WMP}. 

It remains to prove \eqref{grad-Fh-estimate}.

\subsection{Proof of  \eqref{grad-Fh-estimate}}
\label{ProofLm03}

We need to use the following local energy error estimate for finite element solutions of parabolic equations, which was proved in \cite[Lemma 5.1]{Li19}.

\begin{lemma}\label{LocEEst} 
{\it 
	Suppose that $\phi\in L^2(0,T;H^1_0(\Omega))\cap H^1(0,T;L^2(\Omega))$ and 
	$\phi_h\in H^1(0,T; \mathring S_h)$ satisfy the equation 
	\begin{align}  
		(\partial_t(\phi-\phi_h),\chi)
		+(\nabla (\phi-\phi_h),\nabla \chi) =0, 
		\quad\forall\, \chi\in  \mathring S_h, \,\, \mbox{a.e.}\,\, t>0, 
		\label{p32}  
	\end{align} 
	with $\phi(0)=0$ in $\Omega_j''$. 
	Then 
	\begin{align} 
		\hspace{-8pt}
		&\vertiii{\partial_t(\phi-\phi_h)}_{Q_j} 
		+ d_j^{-1}\vertiii{\phi-\phi_h}_{1,Q_j} \notag\\ 
		& \leq 
		C\epsilon^{-3}\big(I_j(\phi_{h}(0))+X_j(I_h\phi-\phi) 
		+d_j^{-2} \vertiii{\phi-\phi_h}_{Q_j'}\big) \notag\\
		&\quad 
		+ \bigg[C\bigg(\frac{h}{d_j}\bigg)^{\frac12} + \epsilon_*\bigg] \big(
		\vertiii{\partial_t(\phi-\phi_h)}_{Q_j'}
		+d_j^{-1}\vertiii{\phi-\phi_h}_{1,Q_j'}\big), 
		\label{LocEngErr} 
	\end{align} 
	where $\epsilon_* = \epsilon + \epsilon^{-1}/C_*$ and 
	\begin{align*}
		&I_j(\phi_{h}(0))=\|\phi_{h}(0)\|_{1,\Omega_j'} + d_j^{-1}\|\phi_{h}(0)\|_{\Omega_j'} \, ,\\[5pt]
		&X_j(I_h\phi-\phi)=
		d_j\vertiii{\partial_t(I_h\phi-\phi)}_{1,Q_j'}
		+\vertiii{\partial_t(I_h\phi-\phi)}_{Q_j'} 
		\notag \\
		&\qquad\qquad\qquad\,\, 
		+d_j^{-1}\vertiii{I_h\phi-\phi}_{1,Q_j'}+
		d_j^{-2}\vertiii{I_h\phi-\phi}_{Q_j'}  \, .
	\end{align*}
	The positive constant $C$ is independent of $h$, $j$ and $C_*$; the norms $\vertiii{\cdot}_{k,Q_j'}$ and $\vertiii{\cdot}_{k,\Omega_j'}$ are defined in \eqref{Q-norm}. 
	
}
\end{lemma}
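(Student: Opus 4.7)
The plan is to establish this localized energy estimate by testing the error equation with a cutoff-weighted Lagrange interpolant of the time derivative of the discrete error, and then absorbing the cutoff-commutator terms via the superapproximation property (P3)-(3) of the finite element space.

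Concretely, I would introduce a smooth space-time cutoff $\omega(x,t)$ with $\omega\equiv 1$ on $Q_j$, $\operatorname{supp}\omega\subset Q_j'$, and scaled derivative bounds $|\partial_t^m\partial_x^\beta\omega|\le C d_j^{-2m-|\beta|}$. Decompose the error as $\phi-\phi_h=(\phi-I_h\phi)-(\phi_h-I_h\phi)=:\rho-\theta$ with $\theta\in\mathring S_h$. Then Galerkin orthogonality \eqref{p32} reads $(\partial_t\theta,\chi)+(\nabla\theta,\nabla\chi)=(\partial_t\rho,\chi)+(\nabla\rho,\nabla\chi)$ for all $\chi\in\mathring S_h$. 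Testing with $\chi=I_h(\omega^2\partial_t\theta)\in\mathring S_h$, integrating by parts in $t$ on the elliptic term to generate $\tfrac12\frac{d}{dt}\|\omega\nabla\theta\|_{L^2}^2$, and integrating over time yields an identity of schematic form
\begin{align*}
\vertiii{\omega\partial_t\theta}_{L^2}^2 + \|\omega\nabla\theta(t)\|_{L^2}^2 \;=\; \mathrm{RHS}.
\end{align*}
The right-hand side splits into: (i) initial-data contributions from $\theta(0)=-\phi_h(0)$ on $\Omega_j''$ (using the hypothesis $\phi(0)=0$ there), producing $I_j(\phi_h(0))$; (ii) interpolation contributions from $\rho$ after Cauchy--Schwarz, giving $X_j(I_h\phi-\phi)$; (iii) commutator-of-cutoff terms such as $(\omega\,\partial_t\omega\,\nabla\theta,\nabla\theta)$, bounded by $d_j^{-2}\vertiii{\phi-\phi_h}_{Q_j'}^2$; and (iv) superapproximation error terms arising from the difference $\omega^2\partial_t\theta-I_h(\omega^2\partial_t\theta)$.

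Item (iv) is where the prefactor $[C(h/d_j)^{1/2}+\epsilon_*]$ emerges. By property (P3)-(3) the commutator has $L^2$ norm $\lesssim hd_j^{-1}\|\omega\partial_t\theta\|_{L^2(Q_j')}$; pairing this with the gradient terms through Cauchy--Schwarz and a weighted inverse inequality on $Q_j'$ produces the square-root factor $(h/d_j)^{1/2}$ in front of the absorbable energy $\vertiii{\partial_t(\phi-\phi_h)}_{Q_j'}+d_j^{-1}\vertiii{\phi-\phi_h}_{1,Q_j'}$, after one half-power is extracted into the absorbing term. The compound parameter $\epsilon_*=\epsilon+\epsilon^{-1}/C_*$ is obtained by iterating the estimate on a nested sequence of cutoffs whose supports grow from $Q_j$ through $Q_j'$ to $Q_j''$, applying Young's inequality with a small weight $\epsilon$ at each step; the $C_*^{-1}$ summand is needed to handle the innermost scale $Q_*$, where $h/d_*$ is comparable to $1/C_*$ and the square-root factor loses its smallness.

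The main obstacle, and the heart of this Schatz--Wahlbin-style local energy argument, is the careful bookkeeping of $\epsilon$ and $C_*$ across the three chained applications needed to obtain the $\epsilon^{-3}$ coefficient in front of the data terms $I_j(\phi_h(0))$, $X_j(I_h\phi-\phi)$, and $d_j^{-2}\vertiii{\phi-\phi_h}_{Q_j'}$, while simultaneously maintaining the small prefactor $\epsilon_*+C(h/d_j)^{1/2}$ in front of the enlarged-subdomain energy. This delicate balance is precisely what permits absorption after summation over dyadic scales $j$, which is ultimately needed to deduce \eqref{grad-Fh-estimate}.
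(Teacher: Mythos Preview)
The paper does not give its own proof of this lemma; it is quoted verbatim from \cite[Lemma~5.1]{Li19}. Your proposed strategy---space-time cutoff $\omega$, decomposition $\phi-\phi_h=\rho-\theta$ with $\theta=\phi_h-I_h\phi\in\mathring S_h$, testing with $\chi=I_h(\omega^2\partial_t\theta)$, and control of the commutator $\omega^2\partial_t\theta-I_h(\omega^2\partial_t\theta)$ via the superapproximation property (P3)-(3)---is precisely the Schatz--Wahlbin-type argument carried out there, so your outline is on the right track.

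One small correction to your heuristic for the term $\epsilon^{-1}/C_*$ in $\epsilon_*$: it does not arise specifically from the innermost set $Q_*$. The lemma concerns a single fixed index $j$, and the factor $1/C_*$ enters through the uniform bound $h/d_j\le 1/C_*$, valid for \emph{every} $j$ in the dyadic range because $d_j\ge d_{J_*}=C_*h$. After a superapproximation step produces a coefficient of order $h/d_j$ in front of an energy term, an application of Young's inequality with weight $\epsilon$ leaves a residual $\epsilon^{-1}(h/d_j)\le \epsilon^{-1}/C_*$ multiplying the enlarged-domain energy. This is a cosmetic point and does not affect the soundness of your plan; the genuine work, as you note, is the repeated chaining that accounts for the $\epsilon^{-3}$ in front of the data terms.
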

In the rest of this section, we apply Lemma \ref{LocEEst} to estimate $h^{-1} \|\nabla F\|_{L^1((0,\infty)\times D_h)}$. The estimation consists of two parts: The first part concerns estimates for $t \in (0, 1)$, and the second part concerns estimates for $t\ge 1$, which is a consequence of the smoothing property of parabolic equations. 

\medskip

{\it Part I.}$\,\,\,$ 
First, we present estimates in the domain ${\mathcal Q}=(0,1)\times\Omega$ with the restriction $h<1/(4C_*) $; see \eqref{h-condition}. In this case, the basic energy estimate gives 
\begin{align} 
&\|\partial_t\Gamma\|_{L^2({\mathcal Q})}
+\|\partial_t\Gamma_h\|_{L^2({\mathcal Q})}
\leq C(\|\Gamma(0)\|_{H^1} +\|\Gamma_h(0)\|_{H^1})
\leq Ch^{-1-d/2} ,  \label{EstGamma-Q1} \\
&\|\Gamma\|_{L^\infty L^2({\mathcal Q})} 
+\|\Gamma_h\|_{L^\infty L^2({\mathcal Q})} 
\leq C(\|\Gamma(0)\|_{L^2} +\|\Gamma_h(0)\|_{L^2})
\leq Ch^{-d/2} ,\\
&\|\nabla\Gamma\|_{L^2({\mathcal Q})} +\|\nabla\Gamma_h\|_{L^2({\mathcal Q})}
\leq C(\|\Gamma(0)\|_{L^2} +\|\Gamma_h(0)\|_{L^2})
\leq Ch^{-d/2} , \label{EstGamma-Q3} \\
&\|\partial_{tt}\Gamma\|_{L^2({\mathcal Q})}
+\|\partial_{tt}\Gamma_h\|_{L^2({\mathcal Q})}
\leq C(\|\Delta\Gamma(0)\|_{H^1} +\|\Delta_h\Gamma_h(0)\|_{H^1})
\leq Ch^{-3-d/2} ,  \label{EstGamma-Q4}  \\
&\|\nabla\partial_t\Gamma\|_{L^2({\mathcal Q})}
+\|\nabla\partial_t\Gamma_h\|_{L^2({\mathcal Q})}
\leq C(\|\Delta\Gamma(0)\|_{L^2} +\|\Delta_h\Gamma_h(0)\|_{L^2})
\leq Ch^{-2-d/2} , \label{EstGamma-Q5}
\end{align} 
where we have used \eqref{reg-Delta-est} and \eqref{Detal-pointwise} to estimate $\Gamma(0)$ and $\Gamma_h(0)$, respectively. Hence, we have 
\begin{align}\label{L2Gamma-Q}
\vertiii{\Gamma}_{Q_*} +\vertiii{\Gamma_h}_{Q_*} 
\le Cd_*\|\Gamma\|_{L^\infty L^2(Q_*)}+Cd_*\|\Gamma_h\|_{L^\infty L^2(Q_*)}
\le Cd_* h^{-d/2} \le CC_*h^{1-d/2} .
\end{align} 
Since the volume of $Q_j$ is $Cd_j^{2+d}$, we can decompose 
$$ \|\partial_tF\|_{L^1(D_h^T)}+\|t\partial_{tt}F\|_{L^1(D_h^T)} + h^{-1} \|\nabla F\|_{L^1(D_h^T)} $$ in the following way:
\begin{align}\label{Bd31K2}
&\|\partial_tF\|_{L^1(D_h^T)}+\|t\partial_{tt}F\|_{L^1(D_h^T)} + h^{-1} \|\nabla F\|_{L^1(D_h^T)}  \notag\\
&\leq  \|\partial_{t} F\|_{L^1(Q_*\cap D_h^T)}
+ \|t\partial_{tt} F\|_{L^1(Q_*\cap D_h^T)} 
+ h^{-1} \|\nabla F\|_{L^1(Q_*\cap D_h^T)} \notag\\
&\quad\, +\sum_{j}\big(\|\partial_{t} F\|_{L^1(Q_j\cap D_h^T)}
+ \|t\partial_{tt} F\|_{L^1(Q_j\cap D_h^T)}
+ h^{-1} \|\nabla F\|_{L^1(Q_j\cap D_h^T)} \big)  \notag\\
&\leq Cd_{*}^{1+\frac{d-1}{2}}h^{\frac12}\big(\vertiii{\partial_tF}_{Q_*\cap D_h^T} 
+ d_*^2\vertiii{\partial_{tt} F}_{Q_*\cap D_h^T}
+ h^{-1} \vertiii{F}_{1,Q_*\cap D_h^T}  \big)  \notag\\
&\quad\, 
+\sum_{j}Cd_{j}^{1+\frac{d-1}{2}}h^{\frac12}
\big(\vertiii{\partial_tF}_{Q_j\cap D_h^T} 
+d_j^2\vertiii{\partial_{tt} F}_{Q_j\cap D_h^T}
+ h^{-1} \vertiii{F}_{1,Q_j\cap D_h^T}  \big) \notag\\
&\leq CC_*^{\frac{5+d}{2}}
+ {\mathscr K} ,
\end{align} 
where we have used \eqref{EstGamma-Q1}, \eqref{EstGamma-Q3} and \eqref{EstGamma-Q4} to estimate 
$$
Cd_{*}^{1+\frac{d-1}{2}}h^{\frac12} \big(\vertiii{\partial_tF}_{Q_*\cap D_h^T} 
+d_*^2\vertiii{\partial_{tt} F}_{Q_*\cap D_h^T}+ h^{-1} \vertiii{F}_{1,Q_*\cap D_h^T} \big)  , 
$$ 
and introduced the notation 
\begin{align}\label{express-K}
{\mathscr K}  :&=
\sum_{j} \bigg(\frac{h}{d_j}\bigg)^{-\frac12} d_{j}^{1+\frac{d}{2}}(d_j^{-1}\vertiii{F}_{1,Q_j}
+\vertiii{\partial_tF}_{Q_j}
+d_{j}\vertiii{\partial_tF}_{1,Q_j}+d_{j}^2\vertiii{\partial_{tt}F}_{Q_j})  . 
\end{align} 

It remains to estimate ${\mathscr K} $. To this end, 
we set ``$\phi_h=\Gamma_h$, $\phi=\Gamma$, $\phi_h(0)=P_h\tilde\delta_{x_0}$ and $\phi(0)=\tilde\delta_{x_0}$'' 
and ``$\phi_h=\partial_t\Gamma_h$, $\phi=\partial_t\Gamma$, $\phi_h(0)=\Delta_hP_h\tilde\delta_{x_0}$ and $\phi(0)=\Delta\tilde\delta_{x_0}$''
in Lemma \ref{LocEEst}, respectively.
Then we obtain 
\begin{align}\label{F1tE}
d_j^{-1}\vertiii{F}_{1,Q_j}+\vertiii{\partial_tF}_{Q_j} 
&\le 
C\epsilon^{-3}(\widehat{I_j}+\widehat{X_j}+ d^{-2}_j\vertiii{ F }_{Q'_j} )  \\
&\quad + \bigg[C\bigg(\frac{h}{d_j}\bigg)^{\frac12} +\epsilon_*\bigg] \big( d_j^{-1}\vertiii{F}_{1,Q_j'}
+ \vertiii{\partial_{t}F }_{Q_j'}\big) \qquad \quad\,\, \notag
\end{align}
and
\begin{align}\label{F1ttE}
d_{j}\vertiii{\partial_tF}_{1,Q_j}+d_{j}^2\vertiii{\partial_{tt}F}_{Q_j} 
&\le 
C \epsilon^{-3} (\overline {I_j}+\overline{X_j} + \vertiii{ \partial_tF }_{Q'_j} )  \\
&\quad 
+ \bigg[C\bigg(\frac{h}{d_j}\bigg)^{\frac12} +\epsilon_* \bigg] \big(  d_j\vertiii{\partial_{t}F}_{1,Q_j'}
+d_{j}^2\vertiii{\partial_{tt}F }_{Q_j'} \big) 
\, , \notag
\end{align}
respectively. 
By using \eqref{eq:Ih_app} (local interpolation error estimate), \eqref{Detal-pointwise} (exponential decay of $ P_h\tilde \delta_{x_0}$) and Lemma \ref{GFEst1} (estimates of regularized Green's function), we have 
\begin{align}
\widehat{I_j}&=\|P_h\tilde\delta_{x_0}\|_{1,\Omega_j'}
+h^{-1}\|P_h\tilde\delta_{x_0}\|_{\Omega_j'} 
\leq Ch^2d_{j}^{-3-d/2},  \label{EstIj}  \\[5pt]
\widehat{X_j}&=
d_j\vertiii{\partial_t(I_h\Gamma-\Gamma)}_{1,Q_j'}
+\vertiii{\partial_t(I_h\Gamma-\Gamma)}_{Q_j'}  \notag\\
&\quad\, 
+d_j^{-1}\vertiii{I_h\Gamma-\Gamma}_{1,Q_j'}
+ d_j^{-2}\vertiii{I_h\Gamma-\Gamma}_{Q_j'}  \notag\\
&
\leq Cd_j h^{\alpha} \vertiii{\partial_{t}\Gamma}_{L^2H^{1+\alpha}(Q_j'')}
+ d_j ^{-1} h^{\alpha}\vertiii{\Gamma}_{L^2H^{1+\alpha}(Q_j'')} \notag\\
&
\leq C h^{\alpha} d_j^{-\alpha-1-d/2}   
\end{align}
and
\begin{align}
\overline{I_j}&=d_{j}^{2}\|\Delta_hP_h\tilde\delta_{x_0}\|_{1,\Omega_j'}
+d_j \|\Delta_hP_h\tilde\delta_{x_0}\|_{\Omega_j'}
\leq Ch^2d_{j}^{-3-d/2},\\[5pt]
\overline{X_j}&=
d_j^3\vertiii{I_h\partial_{tt}\Gamma-\partial_{tt}\Gamma }_{1,Q_j'}
+d_j^2\vertiii{ I_h\partial_{tt}\Gamma-\partial_{tt}\Gamma}_{Q_j'} \notag\\
&\quad\, 
+d_j \vertiii{I_h\partial_{t}\Gamma-\partial_{t}\Gamma}_{1,Q_j'}+
\vertiii{I_h\partial_{t}\Gamma-\partial_{t}\Gamma}_{Q_j'}  \notag\\
&
\leq ( d_j^3 h^{\alpha}+ d_j^{ 2}h^{1+\alpha})\vertiii{\partial_{tt}\Gamma}_{L^2H^{1+\alpha}(Q_j'')}
+(d_j h^{\alpha}+ h^{1+\alpha})\vertiii{\partial_{t}\Gamma}_{L^2H^{1+\alpha}(Q_j'')}  \notag\\
&
\leq C h^{\alpha} d_j^{-\alpha-1-d/2}   .
\label{ovXj}
\end{align}
By substituting \eqref{F1tE}-\eqref{ovXj} into the expression of ${\mathscr K}$ in \eqref{express-K}, we have
\begin{align} 
{\mathscr K} 
&=\sum_{j} \bigg(\frac{h}{d_j}\bigg)^{-\frac12} d_{j}^{1+\frac{d}{2}} (d_j^{-1}\vertiii{F}_{1,Q_j}
+\vertiii{\partial_tF}_{Q_j}
+d_{j}\vertiii{\partial_tF}_{1,Q_j}
+d_{j}^2\vertiii{\partial_{tt}F}_{Q_j})  \notag\\
&\leq C \sum_{j} \bigg(\frac{h}{d_j}\bigg)^{-\frac12} d_{j}^{1+\frac{d}{2}}
\epsilon^{-3}
\big(h^2d_j^{-3-d/2}+h^{\alpha}d_j^{-\alpha-1-d/2}
+d_j^{-2}\vertiii{F}_{Q'_j} \big) \notag\\
&\quad 
+ \sum_{j} \bigg[C + \epsilon_* \bigg(\frac{h}{d_j}\bigg)^{-\frac12} \bigg] 
d_{j}^{1+\frac{d}{2}} 
(d_j^{-1}\vertiii{F}_{1,Q_j'} +\vertiii{\partial_tF}_{Q_j'}
+d_{j}\vertiii{\partial_tF}_{1,Q_j'}
+d_{j}^2\vertiii{\partial_{tt}F}_{Q_j'} ) \notag\\
&\leq C 
+C\epsilon^{-3}  \sum_{j} \bigg(\frac{h}{d_j}\bigg)^{-\frac12} d_{j}^{-1+\frac{d}{2}} \vertiii{F}_{Q_j'}  \notag\\
&\quad 
+ \sum_{j} \bigg[C + \epsilon_* \bigg(\frac{h}{d_j}\bigg)^{-\frac12} \bigg]  d_{j}^{1+\frac{d}{2}}
(d_j^{-1}\vertiii{F}_{1,Q_j'} +\vertiii{\partial_tF}_{Q_j'}
+d_{j}\vertiii{\partial_tF}_{1,Q_j'}
+d_{j}^2\vertiii{\partial_{tt}F}_{Q_j'} )  .
\end{align}

Since $\vertiii{F}_{Q_j'} \le C(\vertiii{F}_{Q_{j-1}} + \vertiii{F}_{Q_{j}} + \vertiii{F}_{Q_{j+1}})$, 
we can convert the $Q_j'$-norm in the inequality above to the $Q_j$-norm:
\begin{align}\label{slkll}
{\mathscr K} &\leq C 
+ C\epsilon^{-3}\bigg(\frac{h}{d_*}\bigg)^{-\frac12} d_{*}^{-1+\frac{d}{2}}  \vertiii{F}_{Q_*} 
+ C\epsilon^{-3} \sum_{j} \bigg(\frac{h}{d_j}\bigg)^{-\frac12} d_{j}^{-1+\frac{d}{2}}  \vertiii{F}_{Q_j} \notag\\ 
&\quad 
+\sum_{j}\bigg[C + \epsilon_* \bigg(\frac{h}{d_j}\bigg)^{-\frac12} \bigg]  d_*^{1+\frac{}{2}}
(d_*^{-1}\vertiii{F}_{1,Q_*} +\vertiii{\partial_tF}_{Q_*}
+d_{*}\vertiii{\partial_tF}_{1,Q_*}
+d_{*}^2\vertiii{\partial_{tt}F}_{Q_*} )  \notag\\ 
&\quad 
+\sum_{j}\bigg[C + \epsilon_* \bigg(\frac{h}{d_j}\bigg)^{-\frac12} \bigg]  d_{j}^{1+\frac{d}{2}}
(d_j^{-1}\vertiii{F}_{1,Q_j} +\vertiii{\partial_tF}_{Q_j}
+d_{j}\vertiii{\partial_tF}_{1,Q_j}
+d_{j}^2\vertiii{\partial_{tt}F}_{Q_j} ) \notag\\
&\leq 
C 
+C\epsilon^{-3} C_*^{\frac{d-1}{2}}+ 
\sum_{j}C\epsilon^{-3} \bigg(\frac{h}{d_j}\bigg)^{-\frac12} d_{j}^{-1+\frac{d}{2}} \vertiii{F}_{Q_j} 
+ \bigg[C + \epsilon_*\bigg(\frac{h}{d_*}\bigg)^{-\frac12} \bigg]  C_*^{3+\frac{d}{2}} \notag\\
&\quad\, 
+ (\epsilon_* +C C_*^{-1/2}) {\mathscr K}. 
\end{align}
where we have used $d_j\ge C_*h$ and \eqref{EstGamma-Q1}-\eqref{EstGamma-Q5} to estimate 
$$
\mbox{$\vertiii{F}_{1,Q_*}$,\,\, $\vertiii{\partial_tF}_{Q_*}$,\,\, $\vertiii{\partial_tF}_{1,Q_*}$\,\, and\,\, 
$\vertiii{\partial_{tt}F}_{Q_*}$}
$$
and used the expression of ${\mathscr K}$ in \eqref{express-K} to bound the terms involving $Q_j$.  
Since $\epsilon_* = \epsilon + \epsilon^{-1}/C_*$, we can make $\epsilon_*$ sufficiently small by first choosing $\epsilon$ small enough and then choosing $C_*$ large enough ($\epsilon$ can be fixed now and $C_*$ will be determined later). Then the last term on the right-hand side of \eqref{slkll} can be absorbed by the left-hand side. Therefore, we obtain
\begin{align} \label{K-L2F}
{\mathcal K}
&\leq
C +C C_*^{3+\frac{d}{2}}+
\sum_{j}\bigg(\frac{h}{d_j}\bigg)^{-\frac12} d_{j}^{-1+\frac{d}{2}}  \vertiii{F}_{Q_j} .
\end{align}

It remains to estimate $\vertiii{F}_{Q_j}$. This was estimated in \cite[inequality (5.32)]{Li19} as 
\begin{align}\label{FL2Qj} 
\vertiii{F}_{Q_j} 
&\leq Ch^{1+\alpha-d/2}d_j^{-\alpha} 
+C\sum_{*,i}(h^{1+\alpha}\vertiii{F_t}_{Q_i} 
+ h^{\alpha} \vertiii{F}_{1,Q_i})
d_i^{1-\alpha} \bigg(\frac{\min (d_i ,d_j )}{\max (d_i ,d_j )}\bigg)^{\alpha} .
\end{align}
Note that
\begin{align}\label{lalnu}
\sum_j \bigg(\frac{h}{d_j}\bigg)^{-\frac12} d_{j}^{-1+\frac{d}{2}} 
\bigg(\frac{\min (d_i ,d_j )}{\max (d_i ,d_j )}\bigg)^{\alpha}
\leq 
\left\{
\begin{aligned}
	& C \bigg(\frac{h}{d_i}\bigg)^{-\frac12}  &&\mbox{if $d=2$ and $r\ge 1$} , \\
	& C\bigg(\frac{h}{d_i}\bigg)^{-\frac12}  d_i^{\frac12} &&\mbox{if $d=3$ and $r\ge 2$} ,
\end{aligned}\right.
\end{align}
where the last inequality uses the assumption that in the case $d=3$ the domain is convex so that $\alpha>1$ can be chosen; see (P3) in Section \ref{Sec2-2}. 
By substituting \eqref{FL2Qj}--\eqref{lalnu} into \eqref{K-L2F} we obtain 
\begin{align*}
{\mathscr K} &\leq 
C+CC_*^{3+d/2}+
\sum_{j} Cd_{j}^{-1+\frac{d}{2}} \vertiii{F}_{Q_j} \notag\\
&\leq C+CC_*^{3+d/2}
+C \sum_j \left(\frac{h}{d_j}\right)^{\alpha+\frac12-\frac{d}{2}} 
\qquad\mbox{here we substitute \eqref{FL2Qj} }\\
&\quad
+C\sum_j \bigg(\frac{h}{d_j}\bigg)^{-\frac12} d_{j}^{-1+\frac{d}{2}} 
\sum_{*,i}(h^{1+\alpha}\vertiii{F_t}_{Q_i} 
+ h^{\alpha} \vertiii{F}_{1,Q_i})
d_i^{1-\alpha} \bigg(\frac{\min (d_i ,d_j )}{\max (d_i ,d_j )}\bigg)^{\alpha} . 
\end{align*}

In the case `$d=2$ (in this case $\alpha>\frac12$)'' or ``$d=3$, $r\ge 2$ and $\Omega$ is convex (in this case $\alpha>1$)'', we have 
\begin{align*}
{\mathscr K}  
&\leq
C+CC_*^{3+d/2} +C 
\qquad\mbox{(here we exchange the order of summation)} \\
&\quad 
+C\sum_{*,i}(h^{1+\alpha}\vertiii{F_t}_{Q_i} 
+ h^{\alpha} \vertiii{F}_{1,Q_i})d_i^{1-\alpha}
\sum_j  \bigg(\frac{h}{d_j}\bigg)^{-\frac12} d_{j}^{-1+\frac{d}{2}} 
\bigg(\frac{\min (d_i ,d_j )}{\max (d_i ,d_j )}\bigg)^{\alpha}\\
&\leq
C+CC_*^{3+\frac{d}{2}}    
+C\sum_{*,i}(h^{1+\alpha}\vertiii{F_t}_{Q_i} 
+ h^{\alpha} \vertiii{F}_{1,Q_i})d_i^{\frac{d}{2}-\alpha}  \bigg(\frac{h}{d_i}\bigg)^{-\frac12} 
\quad\mbox{(\eqref{lalnu} is used)}\\
&=
C+CC_*^{3+\frac{d}{2}}    
+C\sum_{*,i}d_i^{1+\frac{d}{2}}\left(\vertiii{F_t}_{Q_i} 
\bigg(\frac{h}{d_i}\bigg)^{\frac12+\alpha} 
+ d_i^{-1}\vertiii{F}_{1,Q_i}\bigg(\frac{h}{d_i}\bigg)^{\alpha-\frac12}\right) \\
&\leq 
C+CC_*^{3+\frac{d}{2}}    
+Cd_{*}^{1+\frac{d}{2}}
\left(\vertiii{F_t}_{Q_*} + d_j^{-1}\vertiii{F}_{1,Q_*} \right) \\
&\quad
+C\sum_{i}\bigg(\frac{h}{d_i}\bigg)^{-\frac12}  d_i^{1+\frac{d}{2}}\left(\vertiii{F_t}_{Q_i} 
+ d_j^{-1}\vertiii{F}_{1,Q_i}\right)\bigg(\frac{h}{d_i}\bigg)^{\alpha} \\
&\leq  C+C C^{3+d/2 }_* +\frac{C {\mathscr K} }{C_*^{\alpha}}  .
\end{align*}
By choosing $C_*$ to be large enough ($C_*$ is determined now), the term $\displaystyle\frac{C {\mathscr K} }{C_*^{\alpha}} $ will be absorbed by the left-hand side of the inequality above. In this case, the inequality above implies 
\begin{align}
{\mathscr K} \le C .
\end{align}  
Substituting the last inequality into \eqref{Bd31K2} yields 
\begin{align}\label{L1FtQ1}
h^{-1} \|\nabla F\|_{L^1((0,1)\times D_h)} 
&\leq C .
\end{align}

{\it Part II.}$\,\,\,$ 
For $t\in[1,\infty)$, we consider the error equation 
\begin{align}\label{Errror-Gamma-h}
\left\{\begin{array}{ll}
	(\partial_t (\Gamma_h(t) -\Gamma(t)) ,v_h)+(\nabla(\Gamma_h(t)-\Gamma(t)),\nabla v_h)=0 ,
	&\forall \, v_h\in \mathring S_h,\\[5pt]
	\Gamma_h(0) - \Gamma(0) =  \delta_{h,x_0} - \tilde\delta_{x_0}.
\end{array}\right.
\end{align}
Let $\chi(t)$ be a smooth cut-off function such that
$$
\chi(t)= \left\{
\begin{aligned}
&1 &&\mbox{for}\,\,\, t\ge \frac12 ,\\
&0 &&\mbox{for}\,\,\, t\le \frac14 .
\end{aligned}
\right.
$$
Then the function $\eta(t)= \chi(t) (\Gamma_h(t) -\Gamma(t))$ satisfies the following equation: 
\begin{align}\label{Errror-Gamma-h-2}
\left\{\begin{array}{ll}
	(\partial_t \eta(t),v_h)+(\nabla \eta(t) , \nabla v_h)
	= ( \partial_t \chi \, (\Gamma_h(t) -\Gamma(t)) ,v_h)  ,
	&\forall \, v_h\in \mathring S_h,\\[5pt] 
	\eta(0) =  0 ,
\end{array}\right.
\end{align}
which can also be written as 
\begin{align}\label{Errror-Gamma-h-3}
\left\{\begin{array}{ll}
	\partial_t R_h\eta - \Delta_h R_h\eta  
	= - \partial_t [\chi (P_h-R_h)\Gamma]   + \partial_t \chi(t) (\Gamma_h(t) - P_h\Gamma(t)) , \\[5pt] 
	R_h\eta(0) =  0 , 
\end{array}\right.
\end{align}
where $-\Delta_h: \mathring S_h\to \mathring S_h$ is the discrete Laplacian defined by
\begin{equation}\label{eq: discrete Laplace}
(-\Delta_hv_h,\chi)=(\nabla v_h,\nabla \chi), \quad \forall \, v_h\in \mathring S_h.
\end{equation}
The solution to \eqref{Errror-Gamma-h-3} can be furthermore expressed in terms of the semigroup $e^{t\Delta_h}$, i.e., 
\begin{align}
R_h\eta(t)  
&= - \int_0^t e^{(t-s)\Delta_h} \partial_s [\chi(s) (P_h-R_h)\Gamma(s)] \d s \notag\\
&\quad+ \int_0^t e^{(t-s)\Delta_h} \partial_s \chi(s) (\Gamma_h(s) - P_h\Gamma(s)) \d s . 
\end{align}
By applying the operator $(-\Delta_h)^{\frac12} $ to the both sides of the above equality, we obtain 
\begin{align}
(-\Delta_h)^{\frac12}  R_h\eta(t)  
= 
&\, - \int_0^t (-\Delta_h)^{\frac12} e^{(t-s)\Delta_h} \partial_s [\chi(s) (P_h-R_h)\Gamma(s)] \d s \notag\\
&\, + \int_0^t (-\Delta_h)^{\frac32} e^{(t-s)\Delta_h} (-\Delta_h)^{-1} \partial_s \chi(s) (\Gamma_h(s) - P_h\Gamma(s)) \d s ,
\end{align}
and therefore 
\begin{align}\label{RQ4}
\| \nabla R_h\eta(t) \|_{L^2} 
&=  
\| (-\Delta_h)^{\frac12} R_h\eta(t) \|_{L^2} \notag\\
&\le  
\bigg\| \int_0^t  e^{(t-s)\Delta_h} (-\Delta_h)^{\frac12} \partial_s [\chi(s) (P_h-R_h)\Gamma(s)] \d s\bigg\|_{L^2} \notag\\
&\quad\,
+\bigg\|\int_{0}^t (-\Delta_h)^{\frac32}  e^{(t-s)\Delta_h}(-\Delta_h)^{-1} \partial_s \chi(s) (\Gamma_h(s) - P_h\Gamma(s)) \d s \bigg\|_{L^2} \notag\\
&\le 
C \int_{\frac{1}{4}}^t e^{-\lambda_0 (t - s)} \| \nabla (P_h-R_h)\partial_s [\chi(s)\Gamma(s)]\|_{L^2} \d s \notag\\
&\quad\, + C \int_{\frac14}^{\frac12} (t-s)^{-\frac32}  \| (-\Delta_h)^{-1} (\Gamma_h(s) - P_h\Gamma(s))\|_{L^2} \d s ,
\end{align}
where $\lambda_0 > 0$ is the smallest eigenvalue of $-\Delta$ and we have used the following exponential decay estimate for the discrete semigroup in the last inequality:
\begin{align}
	\| e^{t\Delta_h} u_h \|_{L^2} \leq C e^{-\lambda_0 t} \| u_h \|_{L^2} , 
\end{align}
for any $t \geq 0$ and $u_h\in \mathring S_h(\Omega)$. 
{In addition, we have used the following result in \eqref{RQ4}: 
$$
\|(-\Delta_h)^{3/2}e^{(t-s)\Delta_h}\|_{L^2(\Omega)\to L^2(\Omega)}\le C(t-s)^{-3/2} , 
$$
which follows from the smoothing estimates for the analytic semigroup $e^{(t-s)\Delta_h}$, which means that (see \cite[Theorem 2.1 and Remark 2.1]{Li19}) 
$$
\|\partial_t^l e^{t\Delta_h}\chi\|_{L^2(\Omega)}\le Ct^{-l} \|\chi\|_{L^2(\Omega)}
\,\,\,\mbox{for}\,\,\,\forall\chi\in \mathring S_h \,\,\,\mbox{and} \,\,\, l=0,1,2,...  
$$ 
Since $\partial_t^l e^{t\Delta_h}\chi = \Delta_h^l e^{t\Delta_h}\chi $ for $\chi\in \mathring S_h$, it follows that 
$$
\|(-\Delta_h)^le^{t\Delta_h}\chi\|_{L^2(\Omega)}\le Ct^{-l}\|\chi\|_{L^2(\Omega)}
\,\,\,\mbox{for}\,\,\,\forall\chi\in \mathring S_h \,\,\,\mbox{and} \,\,\, l=1,2 , 
$$ 
which imply that
$$
\begin{aligned}
\|(-\Delta_h)^{3/2}e^{(t-s)\Delta_h}\chi\|^2_{L^2(\Omega)}&=\left((-\Delta_h)^{3/2}e^{(t-s)\Delta_h}\chi,(-\Delta_h)^{3/2}e^{(t-s)\Delta_h}\chi\right)\\
&=\left((-\Delta_h)^{2}e^{(t-s)\Delta_h}\chi,(-\Delta_h)e^{(t-s)\Delta_h}\chi\right)\\
&\le \|\Delta_h^{2}e^{(t-s)\Delta_h}\chi\|_{L^2(\Omega)}\|\Delta_he^{(t-s)\Delta_h}\chi\|_{L^2(\Omega)}\\
&\le \frac{C}{(t-s)^3}\|\chi\|^2_{L^2(\Omega)} . 
\end{aligned}
$$
This proves the desired result which we use in \eqref{RQ4}.}

By integrating the above inequality for $t\in[1,+\infty)$ and using Fubini's theorem and \eqref{eq:Ph_app}-\eqref{eq:Rh_app}, we obtain 
\begin{align}\label{eq:eta_grad_L2}
&\| \nabla R_h\eta \|_{L^1(1,\infty;L^2)} \notag\\
&\le 
C\| \nabla (P_h-R_h)\partial_t (\chi \Gamma) \|_{L^1(\frac14,+\infty;L^2)} 
+C \| (-\Delta_h)^{-1}  (\Gamma_h-P_h\Gamma) \|_{L^1(0,\frac12;L^2)} \notag\\
&\le 
Ch^{\alpha}\| \partial_t (\chi \Gamma) \|_{L^1(\frac14,+\infty;H^{1+\alpha})} 
+C \| \Gamma_h-\Gamma \|_{L^1(\frac14,\frac12;L^1)} \notag\\
&\le 
Ch^{\frac12} ,
\end{align}
where we have used Lemma \ref{lemma:three} in deriving the second and the last inequality.

From \eqref{eq:eta_grad_L2} and Poincar\'e inequality,
$$
\|\Gamma_h - R_h\Gamma\|_{L^1(1,+\infty;H^1(\Omega))} \le Ch^{\frac12}. 
$$
By using the triangle inequality, H\"older's inequality on $D_h$, \eqref{eq:Rh_app} and Lemma \ref{lemma:three} (item 1), we obtain
\begin{align}\label{W11-Gamma-2}
&\|\Gamma_h - \Gamma\|_{L^1(1,\infty;W^{1,1}(D_h))} \notag\\
&\le
Ch^{\frac12}\|\Gamma_h - R_h\Gamma\|_{L^1(1,\infty;H^1(D_h))}
+Ch^{\frac12} \|R_h\Gamma-\Gamma\|_{L^1(1,\infty;H^{1}(D_h))} \notag \\
&\le
Ch 
+Ch^{\frac12+\alpha} \| \Gamma \|_{L^1(1,\infty;H^{1+\alpha}(\Omega))} \notag\\
&\le Ch. 
\end{align}
Estimates \eqref{L1FtQ1}--\eqref{W11-Gamma-2} imply \eqref{grad-Fh-estimate}. 
This completes the proof of Theorem \ref{thm:semi-WMP}. 
\qed

\section{Weak maximum principle of fully discrete FEM}\label{sec: fully discrete}
\setcounter{equation}{0}

To prove Theorem \ref{THM:BDF-FEM}, it suffices to show
\begin{align}
\|u_h^N\|_{L^\infty(\Omega)}
\le 
C \max_{1\le n\le k-1} \|u_h^n\|_{L^\infty(\Omega)}
+ C\max_{k\le n\le N}\|g_h^n\|_{L^\infty(\partial\Omega)}
\quad\forall\, N\ge k , 
\end{align}
with a constant $C$ independent of $\tau$, $h$ and $N$. 
To this end, we decompose the solution into three parts, i.e.,
\begin{align}\label{uhN-decomp}
u_h^N = u_{h,1}^N + u_{h,2}^N + u_{h,3}^N ,
\end{align}
where $u_{h,1}^n$, $u_{h,2}^n$ and $u_{h,3}^n$ are finite element solutions of the following problems: 
\begin{align}
\label{Eq-uh1}
&\left\{
\begin{aligned}
&\bigg(\frac{1}{\tau} \sum_{j=0}^k \delta_j u_{h,1}^{n-j},v_h\bigg) + (\nabla u_{h,1}^n,\nabla v_h)=0 &&\forall\, v_h\in \mathring S_h, &&n=k,\dots,N,\\[1pt]
&u_{h,1}^n=g_h^n(\delta_{n,N-1} + \delta_{n,N}) &&\mbox{on}\,\,\,\partial\Omega , &&n=k,\dots,N,\\[1pt]
&u_{h,1}^j=0 &&\mbox{in}\,\,\,\Omega, &&j=0,\dots,k-1,\\[1pt]
\end{aligned}
\right.	\\[8pt]
\label{Eq-uh2}
&\left\{
\begin{aligned}
&\bigg( \frac{1}{\tau} \sum_{j=0}^k \delta_j u_{h,2}^{n-j},v_h\bigg) + (\nabla u_{h,2}^n,\nabla v_h)=0 &&\forall\, v_h\in \mathring S_h, &&n=k,\dots,N,\\[1pt]
&u_{h,2}^n=g_h^n(1 - \delta_{n,N-1} - \delta_{n,N}) &&\mbox{on}\,\,\,\partial\Omega , &&n=k,\dots,N,\\[1pt]
&u_{h,2}^j=0 &&\mbox{in}\,\,\,\Omega, &&j=0,\dots,k-1,\\[1pt]
\end{aligned}
\right.\\[8pt]
\label{Eq-uh3}
&\left\{
\begin{aligned}
&\bigg( \frac{1}{\tau} \sum_{j=0}^k \delta_j u_{h,3}^{n-j},v_h\bigg) + (\nabla u_{h,3}^n,\nabla v_h)=0 &&\forall\, v_h\in \mathring S_h, &&n=k,\dots,N,\\[1pt]
&u_{h,3}^n=0 &&\mbox{on}\,\,\,\partial\Omega, &&n=k,\dots,N,\\[1pt]
&u_{h,3}^j=u_{h}^j &&\mbox{in}\,\,\,\Omega, &&j=0,\dots,k-1,\\[1pt]
\end{aligned}
\right.	
\end{align}
with $\delta_{n,N}$ denoting the Kronecker symbol. In the following three subsections, we prove the following three estimates:
\begin{align}
&\label{uN-Linft-proved}
\|u_{h,1}^N\|_{L^\infty(\varOmega)} 
\begin{cases}
	\leq C \max\limits_{n = N} \|g_{h}^n \|_{L^\infty(\partial\varOmega)} , \quad & N=k ,  \\
	\leq C \max\limits_{n = N-1, N} \|g_{h}^n \|_{L^\infty(\partial\varOmega)},  \quad & N\geq k+1 ,
\end{cases}     
\\
&\label{BDF-FEM-f}
\|u_{h,2}^N\|_{L^\infty(\varOmega)}
\begin{cases}
	= 0 , \quad & N=k, k+1 ,  \\
	\leq C \max\limits_{k \leq n \leq N-2} \|g_{h}^n \|_{L^\infty(\partial\varOmega)},  \quad & N\geq k+2 ,
\end{cases}     
\\
&\label{Eq-uh3-Linfty}
\|u_{h,3}^N\|_{L^\infty(\varOmega)}
\le
C \max_{1\le n\le k-1} \| u_h^n \|_{L^\infty(\varOmega)} .
\end{align}
Then, substituting these estimates into \eqref{uhN-decomp}, we obtain the desired result in Theorem \ref{THM:BDF-FEM}.

\subsection{Proof of \eqref{uN-Linft-proved}}

We define the sector $\Sigma_{\theta} = \{z\in\C : |\arg (z)| \leq \theta \}$. 
The following result will be used in both this and next subsections. 
\begin{lemma}\label{Lemma:z-bd}
For $\theta\in(-\frac{\pi}{2},\frac{\pi}{2})$ and $z\in \Sigma_{\theta}$, the elliptic finite element problem 
\begin{align}
&\left\{
\begin{aligned}
&( z \phi_h , v_h ) + (\nabla \phi_h,\nabla v_h)=0 &&\forall\, v_h\in \mathring S_h, \\[1pt]
&\phi_h=f_h &&\mbox{on}\,\,\,\partial\Omega , \notag
\end{aligned}
\right.	
\end{align}
is well defined for any $f_h\in S_h(\partial\Omega)$ and satisfies the following estimate: 
\begin{align}
\|\phi_h\|_{L^\infty(\varOmega)}
\le
C\|f_h\|_{L^\infty(\partial\varOmega)} , \notag
\end{align}
where the constant $C$ is independent of $z\in\Sigma_\theta$. 

\end{lemma}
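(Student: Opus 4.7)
My plan is to reduce to a homogeneous-boundary problem via a discrete harmonic lifting of $f_h$ and then invoke a uniform $L^\infty$-resolvent estimate for the discrete Laplacian. The first step will be to introduce $\phi_h^{(0)}\in S_h$, the unique discrete harmonic extension of $f_h$ defined by $(\nabla\phi_h^{(0)},\nabla v_h)=0$ for all $v_h\in\mathring S_h$ together with $\phi_h^{(0)}=f_h$ on $\partial\Omega$. The elliptic weak maximum principle of \cite{Leykekhman_Li_2021} (and of \cite{Schatz80} in two dimensions), applied to $\phi_h^{(0)}$, immediately gives $\|\phi_h^{(0)}\|_{L^\infty(\Omega)}\le C\|f_h\|_{L^\infty(\partial\Omega)}$ with $C$ independent of $h$.

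Next I would set $\psi_h:=\phi_h-\phi_h^{(0)}\in\mathring S_h$; subtracting the elliptic equation for $\phi_h^{(0)}$ from the defining equation for $\phi_h$ leads to
\[
(z\psi_h,v_h)+(\nabla\psi_h,\nabla v_h)=-z(\phi_h^{(0)},v_h)\qquad\forall\,v_h\in\mathring S_h,
\]
which is exactly $(z-\Delta_h)\psi_h=-zP_h\phi_h^{(0)}$, with $P_h$ the $L^2$-projection onto $\mathring S_h$. Since $\mathrm{Re}(z)\ge\cos(\theta)\,|z|>0$ on $\Sigma_\theta$ with $\theta<\pi/2$, the sesquilinear form on the left is coercive, so $\psi_h$ is well-defined and equals $-z(z-\Delta_h)^{-1}P_h\phi_h^{(0)}$. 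Combining the uniform $L^\infty$-resolvent estimate
\[
\|z(z-\Delta_h)^{-1}\|_{L^\infty(\Omega)\to L^\infty(\Omega)}\le C\quad\text{for}\quad z\in\Sigma_\theta
\]
with the $L^\infty$-stability of the $L^2$-projection $P_h$ on quasi-uniform meshes will then give $\|\psi_h\|_{L^\infty(\Omega)}\le C\|\phi_h^{(0)}\|_{L^\infty(\Omega)}\le C\|f_h\|_{L^\infty(\partial\Omega)}$, and the triangle inequality produces the claimed bound.

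The only nontrivial analytic input is the uniform $L^\infty$-resolvent estimate, which is equivalent to the analyticity of the discrete heat semigroup $e^{t\Delta_h}$ in $L^\infty(\Omega)$. On a general polygon or a convex polyhedron this is precisely the main result of \cite{Li19} and is the principal obstacle of the argument, since the corner/edge singularities of the continuous resolvent kernel preclude the standard smooth-domain proofs; its proof relies on the same dyadic decomposition and local energy estimates as in Section \ref{sec: semi-discrete}. Once that estimate is accepted, Lemma \ref{Lemma:z-bd} follows from the short decomposition above.
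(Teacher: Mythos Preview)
Your proposal is correct and essentially identical to the paper's own proof: the paper likewise introduces the discrete harmonic extension $w_h$ of $f_h$, writes $(z-\Delta_h)(\phi_h-w_h)=-zP_hw_h$, and combines the elliptic weak maximum principle \cite{Schatz80,Leykekhman_Li_2021} with the $L^\infty$-stability of $P_h$ and the $L^\infty$-resolvent estimate (the paper cites \cite[Theorem~3.1]{Li22} rather than \cite{Li19}, but the content is the same). No modification is needed.
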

\begin{proof}
Let $w_h\in S_h$ be the finite element solution of the elliptic problem 
\begin{align}\label{elliptic-wh}
&\left\{
\begin{aligned}
&(\nabla w_h,\nabla v_h)=0 &&\forall\, v_h\in \mathring S_h, \\[1pt]
&w_h=f_h &&\mbox{on}\,\,\,\partial\Omega .
\end{aligned}
\right.	
\end{align}
Then $\phi_h-w_h\in\mathring S_h$ is the finite element solution of the zero Dirichlet boundary value problem
\begin{equation*} 
(z(\phi_h-w_h), v_h)+(\nabla(\phi_h-w_h),\nabla  v_h) = -(zw_h , v_h)\quad \forall\,  v_h\in\mathring S_h ,
\end{equation*}
which can equivalently be written as
\begin{equation*} 
(z-\varDelta_h)(\phi_h-w_h) = -z P_hw_h ,
\end{equation*}
where $P_h$ is the $L^2$ projection operator from $L^2(\Omega)$ onto its closed subspace $\mathring S_h$. (Since $w_h\neq 0$ on $\partial\varOmega$, it follows that $P_hw_h\neq w_h$.)

For the elliptic problem \eqref{elliptic-wh}, it is known that the following weak maximum principle holds (cf.\ \cite{Schatz80} and \cite{Leykekhman_Li_2021} for 2D and 3D cases, respectively):
\begin{equation}\label{weak-max-elliptic}
\|w_h\|_{L^\infty(\varOmega)} \le 
C\|f_h\|_{L^\infty(\partial\varOmega)} .
\end{equation}
Hence, we have 
\begin{align}\label{uN-wN-Linfty}
\|\phi_h-w_h\|_{L^\infty(\varOmega)} 
= \|z(z-\varDelta_h)^{-1} P_h w_{h} \|_{L^\infty(\varOmega)} 
\le 
C\|w_{h} \|_{L^\infty(\varOmega)} ,
\end{align}
where the last inequality is due to the $L^\infty$ stability of $P_h$ (cf. \cite[Lemma 6.1]{Tho06}) and the resolvent estimate (cf. \cite[Theorem 3.1]{Li22})
$$
\|z(z-\varDelta_h)^{-1} \|_{L^\infty(\varOmega)\rightarrow L^\infty(\varOmega)}
\le
C\quad\forall\, z\in\C,\,\,\, z\in\Sigma_\theta . 
$$
By using the triangle inequality and \eqref{weak-max-elliptic}--\eqref{uN-wN-Linfty}, we obtain \eqref{uN-Linft-proved}. 
\end{proof}

For $N= k$, according to the initial condition, $u_{h,1}^n$ vanishes for $1\le n\le k-1$, and $u_{h,1}^{N}$ is the solution of the discrete elliptic equation
\begin{align}\label{Eq-uh1-N2}
	&\left\{
	\begin{aligned}
		&\bigg( \frac{u_{h,1}^{N}}{\tau},v_h\bigg) + (\nabla u_{h,1}^N,\nabla v_h)=0 &&\forall\, v_h\in \mathring S_h, \\[1pt]
		&u_{h,1}^{N}=g_h^{N} &&\mbox{on}\,\,\,\partial\Omega ,
	\end{aligned}
	\right.	
\end{align}
where we have used the fact that $\delta_0 = 1$ for all BDF-$k$ methods with $k=1,...,6$.
Then Lemma \ref{Lemma:z-bd} implies that the solution of \eqref{Eq-uh1-N2} satisfies \eqref{uN-Linft-proved}. 

If $N\geq k+1$, the solution $u_{h,1}^n$ of \eqref{Eq-uh1} is actually zero for $1\le n\le N-2$. Therefore $u_{h,1}^{N-1}$ satisfies the elliptic equation
\begin{align}\label{Eq-uh1-N-1}
&\left\{
\begin{aligned}
&\bigg( \frac{u_{h,1}^{N-1}}{\tau},v_h\bigg) + (\nabla u_{h,1}^{N-1},\nabla v_h)=0 &&\forall\, v_h\in \mathring S_h, \\[1pt]
&u_{h,1}^{N-1}=g_h^{N-1} &&\mbox{on}\,\,\,\partial\Omega ,
\end{aligned}
\right.	
\end{align}
and similarly $u_{h,1}^N$ is determined by
\begin{align}\label{Eq-uh1-N}
	&\left\{
	\begin{aligned}
		&\bigg( \frac{u_{h,1}^N}{\tau},v_h\bigg) + (\nabla u_{h,1}^N,\nabla v_h)
		=
		- \bigg( \frac{ \delta_1 u_{h,1}^{N-1}}{\tau},v_h\bigg)
		 &&\forall\, v_h\in \mathring S_h, \\[1pt]
		&u_{h,1}^N=g_h^N &&\mbox{on}\,\,\,\partial\Omega .
	\end{aligned}
	\right.	
\end{align}
Since $\frac{1}{\tau}\in \Sigma_\theta$, Lemma \ref{Lemma:z-bd} implies that the solution of \eqref{Eq-uh1-N-1} satisfies 
\begin{align}\label{eq:uh1N-1}
	\| u_{h,1}^{N-1} \|_{L^\infty(\Omega)} \leq C \| g_{h}^{N-1} \|_{L^\infty(\partial\Omega)} .
\end{align}
At the time level $N$, we decompose $u_{h,1}^{N}=u_{h,1}^{N, {\rm homo}}+u_{h,1}^{N, {\rm inhomo}}$ into  homogeneous and inhomogeneous parts. By applying Lemma \ref{Lemma:z-bd} to homogeneous part of \eqref{Eq-uh1-N} to get
\begin{align}
	\| u_{h,1}^{N, {\rm homo}} \|_{L^\infty(\Omega)} \leq C \| g_{h}^{N}  \|_{L^\infty(\partial\Omega)} , \notag
\end{align}
and for the inhomogeneous part we have,
\begin{align}
	\| u_{h,1}^{N, {\rm inhomo}} \|_{L^\infty(\Omega)}
	=
	\| (\tau^{-1} - \Delta_h)^{-1} P_h \tau^{-1} \delta_1 u_{h,1}^{N-1} \|_{L^\infty(\Omega)} 
	&\leq C \| u_{h,1}^{N-1} \|_{L^\infty(\Omega)} \notag\\
	&\leq C \| g_{h}^{N-1} \|_{L^\infty(\partial\Omega)} , \notag
\end{align}
where we have used the resolvent estimate and the $L^\infty$ stability of $P_h$ in the first inequality, and have used \eqref{eq:uh1N-1} in the second inequality. Then by the triangle inequality, we conclude \eqref{uN-Linft-proved} as follows
\begin{align}
\| u_{h,1}^{N} \|_{L^\infty(\Omega)} 
&\leq \| u_{h,1}^{N, {\rm homo}} \|_{L^\infty(\Omega)} + \| u_{h,1}^{N, {\rm inhomo}} \|_{L^\infty(\Omega)} \notag\\
&\leq C (\| g_h^{N-1} \|_{L^\infty(\partial\Omega)} + \| g_h^N \|_{L^\infty(\partial\Omega)}) . \notag
\end{align}

\subsection{Proof of \eqref{BDF-FEM-f}}

According to \eqref{Eq-uh2}, $u_{h,2}^n, k\leq n\leq N$ is trivial for $N\leq k+1$. So in this subsection we will assume $N \geq k+2$ in order to get at least one meaningful time-stepping.

We extend $g_{h}^n$ to be zero for $0\leq n\leq k-1$ and $n\ge N+1$, and redefine $g_h^{N-1}=0$ and $g_h^N=0$. Moreover we extend $u_{h,2}^n, n\geq N+1$, to be the solution of the following equation at the time level $n\geq N+1$:

\begin{align}\label{Eq-uh2-2}
	&\left\{
	\begin{aligned}
		&\bigg( \frac{1}{\tau} \sum_{j=0}^k \delta_j u_{h,2}^{n-j},v_h\bigg) + (\nabla u_{h,2}^n,\nabla v_h)=0 &&\forall\, v_h\in \mathring S_h, &&n\geq k,\\[1pt]
		&u_{h,2}^n=g_h^n  &&\mbox{on}\,\,\,\partial\Omega , &&n\geq k,\\[1pt]
		&u_{h,2}^j=0 &&\mbox{in}\,\,\,\Omega, &&j=0,\dots,k-1 . \\[1pt]
	\end{aligned}
	\right.
\end{align} 
We denote by
$$
\tilde u_{h,2}(\zeta) = \sum_{n=k}^{\infty} u_{h,2}^n \zeta^n
\quad\mbox{and}\quad
\tilde g_h(\zeta)= \sum_{n=k}^{\infty} g_h^n \zeta^n
$$ 
the generating functions of the two sequences $(u_{h,2}^n)_{n=k}^{\infty}$ and $(g_h^n)_{n=k}^{\infty}$, respectively. By definition, $g_h^n = 0$ for $n\geq N$. From the standard energy decay estimate we know the boundedness $\sup_{n\geq 0}\| u_{h,2}^n \|_{L^2(\Omega)} \leq C \sup_{n\geq N}\| u_{h,2}^n \|_{L^2(\Omega)} \leq C_{h, N}$, and moreover via the inverse inequality and the H\"older's inequality, it holds that $\sup_{n\geq 0}\| u_{h,2}^n \|_{L^p(\Omega)} \leq C_{h, N}$ for all $p\in [1,\infty]$. Therefore the $L^p$-valued generating functions $\tilde g_h(\zeta)$ and $\tilde u_{h,2}(\zeta)$ are analytic in $\zeta\in\C$ and in the open unit disk $\zeta\in \D = \{z\in \C : |z|<1 \}$ respectively for any $h > 0$ and $p\in [1,\infty]$.

For any $\zeta\in \D$, we observe the following relation of generating functions
\begin{align}\label{eq:gen_iden}
	\delta(\zeta) \tilde u_{h,2}(\zeta)
	&= \Big(\sum_{j=0}^k \delta_j \zeta^j \Big) \Big(\sum_{j=k}^{+\infty} u_{h,2}^j \zeta^j \Big) \notag\\
	&= \sum_{n=k}^{+\infty} \zeta^n \sum_{j=0, n-j \geq k}^k \delta_j u_{h,2}^{n-j} \notag\\
	&= \sum_{n=k}^{+\infty} \zeta^n \sum_{j=0}^k \delta_j u_{h,2}^{n-j}
	- \sum_{n=k}^{2k-1} \zeta^n \sum_{j=0, n-j < k}^k \delta_j u_{h,2}^{n-j} .
\end{align}
Multiplying \eqref{Euler-FEM} by $\zeta^n$, summing up the results for $n=k,k+1,\dots$ and using the identity \eqref{eq:gen_iden} and the initial condition $u_{h,2}^n = 0 $ for $n = 0,...,k-1$, we obtain the following discrete elliptic equation of the generating function $\tilde u_{h,2}(\zeta)$: 
\begin{align}\label{BDF-FE-Laplce}
\left\{
\begin{aligned}
&\bigg(\delta_\tau(\zeta) \tilde u_{h,2}(\zeta) ,v_h \bigg) + (\nabla \tilde u_{h,2}(\zeta),\nabla v_h)
= \bigg(- \frac{1}{\tau} \sum_{n=k}^{2k-1} \zeta^n \sum_{j=0, n-j < k}^k \delta_j u_{h,2}^{n-j}, v_h \bigg) = 0 \\[3pt] 
&\hspace{310pt}\forall\, v_h\in \mathring S_h \\
&\tilde u_{h,2}(\zeta) = \tilde g_h(\zeta) \quad\mbox{on}\,\,\,\partial\Omega , 
\end{aligned}
\right.	
\end{align}
whose solution map is denoted by $\tilde M_h(\zeta):S_h(\partial\Omega)\rightarrow S_h(\Omega), \tilde g_h(\zeta)\mapsto\tilde u_{h,2}(\zeta)$.

We define the truncated sectors $\Sigma_\theta^\tau = \{z\in \Sigma_{\theta}: {\rm Im}(z) \leq \pi/\tau \}$ and $\Sigma_{\theta,\sigma}^\tau = \{z\in \Sigma_{\theta}: |z|\geq \sigma^{-1} \mbox{ and }{\rm Im}(z) \leq \pi/\tau \}$ for any $\sigma > \tau / \pi$. The following result of the generating function of BDF-$k$ method is shown in \cite[Lemma 3.1]{JZ23}.
\begin{lemma}\label{lemma:BDF_sec}
	For any $\epsilon$ and $k=1,...,6$, there exists $\theta_\epsilon\in(\pi/2,\pi)$ such that for any  $\theta\in(\pi/2,\theta_\epsilon)$, there exist positive constants $C$, $C_1$,and $C_2$ (independent of $\tau$) such that for all $z\in\Sigma_{\theta}^\tau$ we have the inclusion $\delta_\tau(e^{-\tau z})\subseteq \Sigma_{\pi-\theta_k+\epsilon}$ (Recall that $\theta_k$ is the angle of $A$-stability region.) and the following estimates hold
	\begin{align}
		C_1 |z| \leq |\delta_\tau(&e^{-\tau z})| \leq C_2 |z| , \notag\\
		\delta_\tau(e^{-z\tau}) &\in \Sigma_{\pi - \theta_k +\epsilon} , \notag\\
		| \delta_\tau(e^{-z\tau}) - z | &\leq C \tau^k |z|^{k+1} . \notag
	\end{align}
\end{lemma}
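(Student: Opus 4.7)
The plan is to view $\delta_\tau(e^{-\tau z})$ as a high-order perturbation of $z$, exploiting the fact that $\delta(\zeta)$ matches $-\log\zeta$ to order $k$ near $\zeta=1$. From the Taylor series $-\log\zeta=\sum_{j\ge 1}(1-\zeta)^j/j$, valid for $|1-\zeta|<1$, together with the definition $\delta(\zeta)=\sum_{j=1}^k(1-\zeta)^j/j$, one gets the identity
\begin{align*}
\delta(\zeta)+\log\zeta = -\sum_{j=k+1}^\infty\frac{(1-\zeta)^j}{j} = O\bigl((1-\zeta)^{k+1}\bigr).
\end{align*}
Substituting $\zeta=e^{-\tau z}$ yields $\delta(e^{-\tau z})=\tau z+O((\tau z)^{k+1})$ uniformly for $|\tau z|\le c_0$ with $c_0$ a small fixed constant, and dividing through by $\tau$ gives the consistency bound $|\delta_\tau(e^{-\tau z})-z|\le C\tau^k|z|^{k+1}$ in this local regime.

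To extend to all of $\Sigma_\theta^\tau$, I would combine the Taylor estimate with compactness. The constraint $|{\rm Im}\,z|\le \pi/\tau$ together with $z\in\Sigma_\theta$ for $\theta\in(\pi/2,\pi)$ forces $w=\tau z$ to range over a compact subset of the strip $\{|{\rm Im}\,w|\le\pi\}\cap\Sigma_\theta$. On $\{|\tau z|\le c_0\}$ the Taylor estimate supplies everything. On the complementary compact piece, bounded away from the origin, the map $w\mapsto\delta(e^{-w})/w$ is continuous and nonvanishing, hence uniformly bounded above and below, $0<c_1\le|\delta(e^{-w})/w|\le c_2$; rescaling gives the magnitude bound $C_1|z|\le|\delta_\tau(e^{-\tau z})|\le C_2|z|$, and the consistency inequality also holds on this piece because both $|\delta_\tau(e^{-\tau z})-z|$ and $\tau^k|z|^{k+1}$ are comparable to $|z|$ when $|\tau z|$ is bounded.

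The sector inclusion $\delta_\tau(e^{-\tau z})\in\Sigma_{\pi-\theta_k+\epsilon}$ is the central point and rests on $A(\theta_k)$-stability of BDF-$k$. For $z$ with ${\rm Re}\,z>0$ one has $|e^{-\tau z}|<1$, and the standard root-condition characterization of the stability region forces $\delta(e^{-\tau z})\in\Sigma_{\pi-\theta_k}$, hence also $\delta_\tau(e^{-\tau z})$. Extending from the right half-plane to the slightly larger opening $\Sigma_\theta$ (and correspondingly enlarging the target from $\Sigma_{\pi-\theta_k}$ to $\Sigma_{\pi-\theta_k+\epsilon}$) is a continuity/perturbation argument: by analyticity and the $\epsilon$-margin in the target, one chooses $\theta_\epsilon\in(\pi/2,\pi)$ as the largest opening for which the inclusion survives. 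The main obstacle is exactly this angular confinement — the consistency and magnitude bounds are routine Taylor and compactness calculations, whereas the global sector inclusion demands quantitative control of the BDF-$k$ boundary locus $\{\delta(e^{i\phi}):\phi\in(-\pi,\pi)\}$ and a careful perturbation to accommodate $|\zeta|\neq 1$, which is the technical heart of \cite[Lemma 3.1]{JZ23}.
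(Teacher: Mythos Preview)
The paper does not prove this lemma at all; it simply cites it as \cite[Lemma 3.1]{JZ23} and moves on. Your sketch is a reasonable outline of the standard argument behind that cited result: the Taylor identity $\delta(\zeta)+\log\zeta=O((1-\zeta)^{k+1})$ gives the consistency bound near $\tau z=0$, a compactness argument on the rescaled variable $w=\tau z$ handles the magnitude comparison away from the origin, and the sector inclusion is the $A(\theta_k)$-stability statement plus a continuity margin. You even flag correctly that the angular confinement is the genuinely nontrivial part and defer to \cite{JZ23} for it.

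One small point to tighten: your claim that $\delta(e^{-w})/w$ is \emph{nonvanishing} on the compact annular piece $\{c_0\le|w|\le\pi/\sin\theta\}\cap\Sigma_\theta$ is not automatic from continuity alone; it requires knowing that $\delta(\zeta)$ has no zeros with $\zeta=e^{-w}$ for such $w$. This is precisely where the $A(\theta_k)$-stability (and the choice $\theta<\theta_\epsilon$) enters, so the nonvanishing is really a consequence of the sector inclusion rather than an independent compactness fact. Otherwise your plan matches what one expects the proof in \cite{JZ23} to do, and it goes further than the paper itself, which treats the lemma as a black box.
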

For BDF-1 (i.e. backward Euler) time-stepping method, the results in Lemma \ref{lemma:BDF_sec} can be furthermore improved.
\begin{lemma}\label{lemma:Euler_sec}
	Let $\theta\in (\frac{\pi}{2}, {\rm arccot(-\frac{2}{\pi})})$. For $z\in \Sigma_{\theta}$, we have $\frac{1 - e^{-\tau z}}{\tau}\in \Sigma_{\theta}$. Moreover there exist positive constants $C$, $C_1$,and $C_2$ such that for $z\in \Sigma_{\theta}^\tau$
	\begin{align}
		&C_1 |z| \leq |\frac{1 - e^{-\tau z}}{\tau}| \leq C_2 |z| , \notag\\
		&| \frac{1 - e^{-\tau z}}{\tau} - z | \leq C \tau |z|^{2} . \notag
	\end{align}
	Additionally, for $\sigma>\tau/\pi$, the following inequalities hold:  
	\begin{align}
		&\bigg| 1-\frac{e^{\tau z}-1}{z\tau}	\bigg|
		\le
		C|z|\tau 
		\quad\mbox{and}\quad 
		\bigg| \frac{e^{\tau z}-1}{z\tau}	\bigg|
		\leq C
		&&\forall\,  z\in \Gamma_{\theta,\sigma}^\tau , 
		\label{exp-tau-taylor} \\
		&\bigg|\frac{e^{\tau z}-1}{z\tau}\bigg|\le C 
		\quad\mbox{and}\quad 
		|z|\le C \bigg| \frac{1-e^{-\tau z}}{\tau} \bigg| 
		&&\forall\,  z\in \Gamma_{\theta,\sigma}\backslash\Gamma_{\theta,\sigma}^\tau . 
		\label{exp-tau-z}
	\end{align}
\end{lemma}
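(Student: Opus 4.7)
The plan is to address the sector inclusion and the quantitative estimates by reducing everything to analysis of the entire functions $(1-e^{-s})/s$ and $(e^s-1)/s$ on a compact set. The key observation is that for $z\in\Sigma_\theta^\tau$ the constraint $|\mathrm{Im}(z)|\le\pi/\tau$ together with $|\arg z|\le\theta<\pi$ yields $\tau|z|\le\pi/\sin\theta$, so $s:=\tau z$ lies in a fixed compact set $K_\theta$ on which both $(1-e^{-s})/s$ and $(e^s-1)/s$ are analytic, take the value $1$ at $s=0$, and are nonvanishing (their only zeros $2\pi\mathrm{i}k$ with $k\ne 0$ are excluded by $|\mathrm{Im}(s)|\le\pi$).

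For the two-sided bound $C_1|z|\le|(1-e^{-\tau z})/\tau|\le C_2|z|$ on $\Sigma_\theta^\tau$, I would simply take $C_1$ and $C_2$ to be the minimum and maximum of $|(1-e^{-s})/s|$ on the compact set $K_\theta$, which exist and are positive by continuity and non-vanishing. The consistency estimate $|(1-e^{-\tau z})/\tau-z|\le C\tau|z|^2$ follows immediately from the Taylor expansion $(1-e^{-s})/s=\sum_{k\ge 0}(-s)^k/(k+1)!$, which gives $|(1-e^{-s})/s-1|\le C|s|$ on $K_\theta$; multiplying by $|z|$ produces the claim.

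For the sector inclusion, since $1/\tau>0$ preserves arguments, the assertion reduces to $|\arg(1-e^{-s})|\le\theta$ for $s$ in the relevant set. I would parametrize the sector boundary $s=re^{\mathrm{i}\theta}$ ($r>0$, using symmetry for the other ray) and analyze $\arg(1-e^{-s})$ using the real and imaginary parts $\mathrm{Re}(1-e^{-s})=1-e^{-r\cos\theta}\cos(r\sin\theta)$ and $\mathrm{Im}(1-e^{-s})=e^{-r\cos\theta}\sin(r\sin\theta)$. The argument starts at $\theta$ as $r\to 0^+$ (from $1-e^{-s}\sim s$), and the second-order Taylor correction $-s^2/2$ displaces the image in the direction $2\theta+\pi$; enforcing that this perturbation keeps the argument inside $[-\theta,\theta]$ pins down the threshold $\theta<{\rm arccot}(-2/\pi)$, the constant $2/\pi$ encoding the ratio between the linear and quadratic Taylor coefficients at the origin.

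The remaining pair of estimates splits according to whether $z$ lies in the truncated piece $\Gamma_{\theta,\sigma}^\tau\subset\Sigma_\theta^\tau$ or in the exterior piece $\Gamma_{\theta,\sigma}\setminus\Gamma_{\theta,\sigma}^\tau$. On the truncated piece, the Taylor expansion of $(e^s-1)/s$ on $K_\theta$ gives both bounds $|(e^{\tau z}-1)/(\tau z)|\le C$ and $|1-(e^{\tau z}-1)/(\tau z)|\le C|\tau z|$. On the exterior piece one lies on a ray $\arg z=\pm\theta$ with $|\mathrm{Im}(z)|>\pi/\tau$ and thus $\tau|z|\ge\pi/\sin\theta$; since $\theta>\pi/2$ forces $\mathrm{Re}(z)<0$, one gets $|e^{\tau z}|\le 1$ and hence $|(e^{\tau z}-1)/(\tau z)|\le 2/(\tau|z|)\le 2\sin\theta/\pi$, while the reverse triangle inequality combined with $e^x-1\ge x$ yields $|1-e^{-\tau z}|\ge e^{\tau|\mathrm{Re}(z)|}-1\ge\tau|\cos\theta||z|$, giving $|z|\le|\cos\theta|^{-1}|(1-e^{-\tau z})/\tau|$, with $|\cos\theta|^{-1}$ finite since $\theta\in(\pi/2,\pi)$. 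The main obstacle will be the sharp identification of the angle ${\rm arccot}(-2/\pi)$ in the sector-inclusion step, which demands the delicate boundary computation outlined above; everything else reduces to routine Taylor-expansion and reverse-triangle-inequality estimates on $K_\theta$ and its complement.
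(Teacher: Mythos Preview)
Your approach for the last two displayed inequalities---Taylor expansion on a bounded set for $z\in\Gamma_{\theta,\sigma}^\tau$, and direct triangle-inequality estimates exploiting $\tau|z|\ge\pi/\sin\theta$ for $z\in\Gamma_{\theta,\sigma}\setminus\Gamma_{\theta,\sigma}^\tau$---is exactly what the paper does (it says ``Taylor's expansion on a bounded domain'' and ``by using the fact that $|z|\tau\ge\pi/|\sin\theta|$''). Your explicit computations for the exterior piece are correct.

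There is, however, a genuine gap in your treatment of the two-sided bound on $\Sigma_\theta^\tau$. You assert that $|\mathrm{Im}(z)|\le\pi/\tau$ together with $|\arg z|\le\theta$ forces $\tau|z|\le\pi/\sin\theta$, and then invoke compactness. This implication is false: take $z>0$ real. Then $\mathrm{Im}(z)=0$ and $\arg z=0$, yet $|z|$ is unbounded. In fact the lower bound $C_1|z|\le|(1-e^{-\tau z})/\tau|$ cannot hold on all of $\Sigma_\theta^\tau$ as defined in the paper, since $(1-e^{-s})/s\to 0$ as $s\to+\infty$ along the positive real axis. The estimate is really only needed---and only valid---on the contour $\Gamma_{\theta,\sigma}^\tau$, where the ray portion satisfies $|\arg z|=\theta$ (not $\le\theta$), and \emph{there} your compactness argument is correct. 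The paper sidesteps this by simply citing \cite{GLW19,GLW19b} for the first part, so the imprecision in the stated domain is inherited from the literature; but your proof as written rests on a false claim.

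Your outline for the sector inclusion $\frac{1-e^{-\tau z}}{\tau}\in\Sigma_\theta$ is too sketchy to evaluate. A local Taylor analysis near $s=0$ cannot by itself locate the global threshold $\mathrm{arccot}(-2/\pi)$; one needs a boundary argument valid for all $r>0$ along the ray $s=re^{i\theta}$, and your remark that $2/\pi$ ``encodes the ratio between the linear and quadratic Taylor coefficients'' does not explain the appearance of $\pi$. The paper again defers to \cite{GLW19,GLW19b} here rather than giving a self-contained argument.
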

The first part of this lemma results from \cite[Lemma 3.4]{GLW19} and \cite[Appendix C]{GLW19b}. For the second part, inequalities in \eqref{exp-tau-taylor} can be verified by using Taylor's expansion on a bounded domain. Inequalities in \eqref{exp-tau-z} can be proved by using the fact that $|z|\tau\ge \pi/ | \sin(\theta) |$ for $z\in \Gamma_{\theta,\sigma}\backslash\Gamma_{\theta,\sigma}^\tau$. 

From now on, we fix any $0 < \epsilon < \min_{1\leq k\leq 6} \theta_k$ and an angle $\theta$ such that 
\begin{align}\label{eq:theta}
	\frac{\pi}{2} <\theta < \min \{\theta_\epsilon,  {\rm arccot(-\frac{2}{\pi})} \}
\end{align}
where $\theta_\epsilon$ is determined in Lemma \ref{lemma:BDF_sec}.
From Lemma \ref{lemma:BDF_sec}, we know $\delta_\tau(e^{-z\tau}) \in \Sigma_{\pi - \theta_k +\epsilon} \subseteq \rho(\Delta_h)$ for $z\in \Sigma_{\theta}^\tau$, so the operator $\tilde M(e^{-\tau z})$ is well-defined and analytic for $z\in \Sigma_{\theta}^\tau$, satisfying the following estimate according to Lemma \ref{Lemma:z-bd}: 
\begin{align}\label{Mhz-Linfty}
\| \tilde M_h(e^{-\tau z}) \|_{L^\infty(\partial\varOmega)\rightarrow L^\infty(\varOmega)}
\le C 
\quad\forall\, z\in \Sigma_{\theta}^\tau .
\end{align}	
Since $\tilde u_{h,2}(\cdot)$ is analytic in the open unit disk $\D$, by Cauchy's integral formula, we derive for any $0 < \rho < 1$ that
\begin{align}\label{u_{h,2}n-repr}
u_{h,2}^N
&=\frac{1}{2\pi i}\int_{|\zeta|=\rho}
\tilde u_{h,2}(\zeta) \zeta^{-N-1} \d\zeta \notag \\
&=\frac{\tau}{2\pi i}\int_{-\frac{\ln\rho}{\tau} - i\frac{\pi}{\tau}}^{-\frac{\ln\rho}{\tau} + i\frac{\pi}{\tau}}
\tilde u_{h,2}(e^{-\tau z}) e^{t_Nz} \d z 
&&\mbox{(change of variable $\zeta=e^{-\tau z}$)} \notag \\
&=\frac{\tau}{2\pi i}\int_{\Gamma_{\theta,\sigma}^\tau}
\tilde u_{h,2}(e^{-\tau z}) e^{t_Nz}\d z 
&&\mbox{(deform the contour)} \notag \\
&=\frac{\tau}{2\pi i}\int_{\Gamma_{\theta,\sigma}^\tau}
\tilde M_h(e^{-\tau z})\tilde g_h(e^{-\tau z})e^{t_Nz}\d z 
&&\mbox{(by the definition of $M_h(\zeta)$)} ,
\end{align}
where we have deformed the contour of integration from 
$$\{z\in\C:{\rm Re}(z)= -\ln\rho/\tau,\,\, |{\rm Im}(z)| \le \pi/\tau \}$$ to 
\begin{align}
\begin{aligned}
\Gamma_{\theta,\sigma}^\tau
=\Gamma_{\theta,\sigma}^{\tau,1}\cup \Gamma_{\theta,\sigma}^{\tau,2}
\,\,\,\mbox{with}\,\,\,
\Gamma_{\theta,\sigma}^{\tau,1}
&=\{z\in\C: |{\rm arg}(z)|=\theta,\,\,|z|\ge s^{-1}\,\,\mbox{and}\,\,|{\rm Im}(z)|\le \pi/\tau\} ,\\
\,\,\mbox{and}\,\,\,
\Gamma_{\theta,\sigma}^{\tau,2}
&=\{z\in\C: |z|=s^{-1}\,\,\,\mbox{and}\,\,\, 
|{\rm arg}(z)|\le\theta \} , 
\end{aligned}
\end{align}
where $\sigma>\tau/\pi$ can be arbitrary. 
The deformation of the contour is valid due to the analyticity of the integrand and the periodicity in ${\rm Im}(z)$ with period $2\pi/\tau$. 

Let $u_{h,2}\in C([0,T]\times \Omega)\cap C^{\infty}_{\rm pw}([0,T];S_h(\Omega))$ be the globally continuous piecewise smooth solution of the semi-discrete FEM
\begin{align}\label{semi-discrete-FEM-2}
\left\{
\begin{aligned}
&(\partial_t u_{h,2}(t),v_h) + (\nabla u_{h,2}(t),\nabla v_h)=0 &&\forall\, v_h\in \mathring S_h,&&\forall\,t\in(0,T]\\ 
&u_{h,2}(t)=g_h(t) &&\mbox{on}\,\,\,\partial\Omega ,&&\forall\,t\in [0,T],\\
&u_{h,2}(0)=0 &&\mbox{in}\,\,\,\Omega ,
\end{aligned}
\right.	
\end{align}
where $g_h(t)=g_h^n$ is the $S_h(\partial\Omega)$-valued globally continuous piecewise linear function whose nodal values satisfy $g_h(t_n) = g_h^n$ for $n \geq 0$. In particular, $g_h(t)=0$ for $t\in [t_{N-1},t_N]$, which implies that 
\begin{align}
u_{h,2}(t) 
=e^{(t-t_{N-1})\varDelta_h}u_{h,2}(t_{N-1})
\quad\forall\, t\in [t_{N-1},t_N]. \notag
\end{align}
Hence, we can furthermore decompose $u_{h,2}(t)$ into 
\begin{align}
u_{h,2}(t)=u_{h,21}(t)+u_{h,22}(t) , \notag
\end{align}
with 
\begin{align*}
&u_{h,21}(t)
=\left\{
\begin{aligned}
&u_{h,2}(t) &&\mbox{for}\,\,\,t\in [0,t_{N-1}],\\
&0      &&\mbox{for}\,\,\,t\in(t_{N-1},t_N],
\end{aligned}
\right. \\[5pt]
&u_{h,22}(t)
=\left\{
\begin{aligned}
&0 &&\mbox{for}\,\,\,t\in [0,t_{N-1}],\\
&e^{(t-t_{N-1})\varDelta_h}u_{h,2}(t_{N-1})      &&\mbox{for}\,\,\,t\in(t_{N-1},t_N] ,
\end{aligned}
\right.
\end{align*}
and denote by 
$$
\hat v(z)
=\int_0^{+\infty} e^{-tz}v(t)\d t 
$$ 
the Laplace transform of a function $v$ in time. Then
\begin{align}
\hat u_{h,2}(z) 
=\hat u_{h,21}(z) + \hat u_{h,22}(z) 
=\hat u_{h,21}(z) + (z-\varDelta_h)^{-1}e^{-zt_{N-1}} u_{h,2}(t_{N-1}) . \notag
\end{align}
By considering the Lapalce transform of \eqref{semi-discrete-FEM-2} in time, we get
\begin{align}\label{semi-discrete-FEM-Laplce}
\left\{
\begin{aligned}
&(z\hat u_{h,2}(z),v_h) + (\nabla \hat u_{h,2}(z),\nabla v_h)=0 &&\forall\, v_h\in \mathring S_h, \\ 
&\hat u_{h,2}(z)=  \hat g_h(z)
&&\mbox{on}\,\,\,\partial\Omega .
\end{aligned} 
\right.	
\end{align}
We also have the following formula for the Laplace transform of the piecewise linear function $g_h(t)$.
\begin{lemma}\label{lemma:z-trans}
	\begin{align}\label{eq:z-trans}
		\hat g_h (z) = \frac{e^{-\tau z} + e^{\tau z} - 2}{z^2 \tau} \tilde g_h(e^{-\tau z}) .
	\end{align}
\end{lemma}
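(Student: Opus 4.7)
The plan is to represent $g_h(t)$ in the piecewise-linear nodal basis and compute its Laplace transform term by term. Writing $\phi_n$ for the standard continuous piecewise-linear hat function at node $t_n = n\tau$ (so $\phi_n(t_m) = \delta_{nm}$), the assumptions give
\begin{align*}
g_h(t) = \sum_{n=0}^{\infty} g_h^n\, \phi_n(t).
\end{align*}
Because we have set $g_h^0 = g_h^1 = \cdots = g_h^{k-1} = 0$ and $g_h^{N-1} = g_h^N = g_h^{N+1} = \cdots = 0$, this is really a finite sum over $k \le n \le N-2$, so the boundary hat $\phi_0$ (which is only a half-hat on $[0,\tau]$) does not appear, and $g_h$ has compact support in time. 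In particular $\hat g_h(z) = \int_0^\infty e^{-tz} g_h(t)\,dt$ is entire in $z$, and swapping sum and integral is unproblematic.

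Next, I would compute $\hat\phi_n(z)$ for $n \ge 1$ by the substitution $s = t - t_n$:
\begin{align*}
\hat\phi_n(z)
= \int_{t_{n-1}}^{t_{n+1}} e^{-tz}\phi_n(t)\,dt
= e^{-n\tau z}\int_{-\tau}^{\tau} e^{-sz}\Bigl(1 - \tfrac{|s|}{\tau}\Bigr) ds .
\end{align*}
Splitting the inner integral into $[-\tau,0]$ and $[0,\tau]$ and applying integration by parts on each half (using $\int_0^\tau e^{-sz}\,ds = (1-e^{-\tau z})/z$ and $\int_0^\tau s e^{-sz}\,ds = -\tau e^{-\tau z}/z + (1-e^{-\tau z})/z^2$, and similarly with $z \to -z$ on the left half) and adding the pieces yields the elementary identity
\begin{align*}
\int_{-\tau}^{\tau} e^{-sz}\Bigl(1 - \tfrac{|s|}{\tau}\Bigr) ds = \frac{e^{\tau z} + e^{-\tau z} - 2}{\tau z^2}.
\end{align*}
Thus $\hat\phi_n(z) = \dfrac{e^{\tau z} + e^{-\tau z} - 2}{\tau z^2}\, e^{-n\tau z}$ for every $n \ge 1$.

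Finally, by linearity of the Laplace transform applied to the finite sum,
\begin{align*}
\hat g_h(z)
= \sum_{n=k}^{\infty} g_h^n \hat\phi_n(z)
= \frac{e^{\tau z} + e^{-\tau z} - 2}{\tau z^2}\sum_{n=k}^{\infty} g_h^n (e^{-\tau z})^n
= \frac{e^{\tau z} + e^{-\tau z} - 2}{\tau z^2}\, \tilde g_h(e^{-\tau z}),
\end{align*}
which is the identity claimed in Lemma~\ref{lemma:z-trans}. There is no real obstacle here: the calculation is entirely routine once one observes that the finite temporal support makes all sums finite; the only point deserving a word of care is the treatment of the boundary node $n=0$, which is moot because $g_h^0 = 0$ by the zero extension, so all hats appearing in the expansion are full (symmetric) hats and the formula for $\hat\phi_n$ is the same for every $n \ge 1$.
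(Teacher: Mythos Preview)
Your proof is correct. Both your argument and the paper's compute the Laplace transform of the piecewise-linear interpolant directly, but the organization differs: the paper integrates element-by-element over each $[t_j,t_{j+1}]$, writing $g_h(t)=g_h^j+\tau^{-1}(g_h^{j+1}-g_h^j)(t-t_j)$ on that interval, and then performs an Abel summation (discrete integration by parts) to collapse the resulting sums; you instead expand $g_h$ in the nodal hat-function basis and compute $\hat\phi_n(z)$ once. Your route is slightly cleaner in that the factorization $\hat\phi_n(z)=e^{-n\tau z}\cdot(e^{\tau z}+e^{-\tau z}-2)/(z^2\tau)$ falls out immediately from the translation-invariance of the uniform grid, so no summation-by-parts step is needed; the paper's element-wise computation, on the other hand, would generalize more readily to a nonuniform partition. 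For the present uniform-step setting the two arguments are equivalent and equally elementary.
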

\begin{proof}
	We use the change of variables $\zeta=e^{-\tau z}$. Straightforward calculations together with the initial condition $g_h^0=0$ lead to
	\begin{align}
		\hat g_h (z) 
		&=
		\int_0^{+\infty} g_h(t) e^{-zt} \d t \notag\\
		&=
		\sum_{j=0}^{+\infty }\int_{t_j}^{t_{j+1}} \bigg( g_h^j + \frac{g_h^{j+1} - g_h^j}{\tau} (t - t_j) \bigg) e^{-zt} \d t \notag\\
		&=
		\sum_{j=0}^{+\infty } g_h^j \frac{\zeta^{j+1} - \zeta^{j}}{-z} 
		+ \sum_{j=0}^{+\infty } \frac{g_h^{j+1} - g_h^j}{\tau} \bigg( -\frac{\tau\zeta^{j+1}}{z} - \frac{\zeta^{j+1} - \zeta^{j}}{z^2} \bigg)
		\notag\\
		&=
		\sum_{j=0}^{+\infty } \frac{1 - \zeta}{z} g_h^j \zeta^j
		- \sum_{j=0}^{+\infty } \frac{1 - \zeta}{z} g_h^j \zeta^j
		- \sum_{j=0}^{+\infty } \frac{ (\zeta^{j+1} - \zeta^j) (g_h^{j+1} - g_h^j)}{z^2 \tau}
		\notag\\
		&=
		- \sum_{j=0}^{+\infty } \frac{ (1- \zeta) (1- \zeta^{-1}) }{z^2 \tau} g_h^j \zeta^j
		\notag\\
		&=
		\frac{e^{-\tau z} + e^{\tau z} - 2}{z^2 \tau} \tilde g_h(e^{-\tau z}) .
		\notag
	\end{align}
\end{proof}

Analogous to \eqref{exp-tau-taylor} and \eqref{exp-tau-z}, the estimates below follow from Taylor's expansion.
\begin{align}
	&\bigg| 1- \frac{e^{-\tau z} + e^{\tau z} - 2}{z^2 \tau^2} \bigg|
	\le
	C|z|^2\tau^2 
	\quad\mbox{and}\quad 
	\bigg| \frac{e^{-\tau z} + e^{\tau z} - 2}{z^2 \tau^2} \bigg|
	\leq C
	&&\forall\,  z\in \Gamma_{\theta,\sigma}^\tau , 
	\label{exp-tau-taylor1} \\
	&
	\Bigg|\frac{e^{-\tau z} + e^{\tau z} - 2}{z^2 \tau^2} \Bigg| \leq C |e^{-\tau z}|
	&&\forall\,  z\in \Gamma_{\theta,\sigma}\backslash\Gamma_{\theta,\sigma}^\tau . 
	\label{exp-tau-z1}
\end{align}
Then using the above lemma, we can rewrite \eqref{semi-discrete-FEM-Laplce} as  
\begin{align}\label{semi-discrete-FEM-Laplce-2}
\left\{
\begin{aligned}
&\bigg(\frac{1 - e^{-\tau z}}{\tau} \hat u_{h,2}(z),v_h\bigg) + (\nabla \hat u_{h,2}(z),\nabla v_h)
=
\bigg( \bigg(\frac{1 - e^{-\tau z}}{\tau} - z \bigg)\hat u_{h,2}(z),v_h \bigg)  \\
&\hspace{280pt} \forall\, v_h\in \mathring S_h, \\ 
&\hat u_{h,2}(z)=\ztr \tilde g_h(e^{-\tau z}) \quad \mbox{on}\,\,\,\partial\Omega .
\end{aligned} 
\right.	
\end{align}
We denote by $\tilde L_h(e^{-\tau z}):S_h(\partial\Omega)\rightarrow S_h(\Omega)$ the solution map of the homogeneous part of equation \eqref{semi-discrete-FEM-Laplce-2}. By the property of $\theta$ (equation \eqref{eq:theta}) and Lemma \ref{lemma:Euler_sec}, we have
\begin{align}\label{Lhz-Linfty}
	\| \tilde L_h(e^{-\tau z}) \|_{L^\infty(\partial\varOmega)\rightarrow L^\infty(\varOmega)}
	\le C 
	\quad\forall\, z\in \Sigma_{\theta} .
\end{align}	
The estimate of the operator difference $\tilde M_h(e^{-\tau z}) - \tilde L_h(e^{-\tau z})$ is given in the following lemma.
\begin{lemma}\label{lemma:M-L}
	Given any $z\in \Sigma_{\theta}^\tau$, it holds that
	\begin{align}
		\| \tilde M_h(e^{-\tau z}) - \tilde L_h(e^{-\tau z}) \|_{L^\infty(\partial\varOmega)\rightarrow L^\infty(\varOmega)}
		\leq C \tau |z| . \notag
	\end{align}
\end{lemma}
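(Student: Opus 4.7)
The plan is to compare the two solution maps by testing them on a common boundary datum $f \in S_h(\partial\Omega)$ and deriving an equation for the difference $w := \tilde M_h(e^{-\tau z})f - \tilde L_h(e^{-\tau z})f$. Both $\tilde M_h(e^{-\tau z})f$ and $\tilde L_h(e^{-\tau z})f$ carry the same trace $f$ on $\partial\Omega$, so $w \in \mathring S_h$. Subtracting the defining weak formulation \eqref{BDF-FE-Laplce} (whose right-hand side vanishes because $u_{h,2}^j=0$ for $0\le j\le k-1$) from the homogeneous version of \eqref{semi-discrete-FEM-Laplce-2}, I obtain, in operator form on $\mathring S_h$,
\[
\bigl(\delta_\tau(e^{-\tau z}) - \Delta_h\bigr) w
= -\Bigl(\delta_\tau(e^{-\tau z}) - \tfrac{1-e^{-\tau z}}{\tau}\Bigr)\, P_h \tilde L_h(e^{-\tau z}) f ,
\]
where $P_h$ appears because $\tilde L_h(e^{-\tau z}) f$ is not in general in $\mathring S_h$.

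The next step is to invert the discrete resolvent. By Lemma \ref{lemma:BDF_sec}, for $z \in \Sigma_\theta^\tau$ the point $\delta_\tau(e^{-\tau z})$ lies in $\Sigma_{\pi - \theta_k + \epsilon}$, which is contained in the $L^\infty$-resolvent sector of $\Delta_h$ for the angle $\theta$ chosen in \eqref{eq:theta}. The discrete resolvent estimate of \cite[Theorem 3.1]{Li22} then gives
\[
\bigl\| \delta_\tau(e^{-\tau z}) \bigl(\delta_\tau(e^{-\tau z}) - \Delta_h\bigr)^{-1} \bigr\|_{L^\infty(\Omega) \to L^\infty(\Omega)} \le C ,
\]
and combining with the lower bound $|\delta_\tau(e^{-\tau z})| \ge C_1 |z|$ from Lemma \ref{lemma:BDF_sec} yields $\bigl\|\bigl(\delta_\tau(e^{-\tau z}) - \Delta_h\bigr)^{-1}\bigr\|_{L^\infty \to L^\infty} \le C/|z|$.

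I then estimate the symbol mismatch. Lemmas \ref{lemma:BDF_sec} and \ref{lemma:Euler_sec} give $|\delta_\tau(e^{-\tau z}) - z| \le C\tau^k |z|^{k+1}$ and $|\tfrac{1-e^{-\tau z}}{\tau} - z| \le C\tau |z|^2$. Since $z \in \Sigma_\theta^\tau$ implies $\tau |z| \le C$, the inequality $\tau^k|z|^{k+1} = (\tau|z|)^{k-1}\tau|z|^2 \le C\tau|z|^2$ combined with the triangle inequality delivers
\[
\Bigl|\, \delta_\tau(e^{-\tau z}) - \tfrac{1-e^{-\tau z}}{\tau} \,\Bigr| \le C \tau |z|^2 .
\]
Multiplying the three factors and invoking the $L^\infty$-stability of $P_h$ together with the uniform bound \eqref{Lhz-Linfty} then yields
\[
\|w\|_{L^\infty(\Omega)} \le \frac{C}{|z|} \cdot C\tau|z|^2 \cdot C \|f\|_{L^\infty(\partial\Omega)} = C\tau|z| \|f\|_{L^\infty(\partial\Omega)} ,
\]
which is the claimed bound.

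The main obstacle I anticipate is the first step: ensuring that the $L^\infty$ discrete resolvent estimate applies uniformly in the whole sector $\Sigma_{\pi - \theta_k + \epsilon}$ that contains the BDF-$k$ symbol values. This is precisely the reason for the constrained choice of the wedge angle $\theta$ in \eqref{eq:theta}, which forces $\delta_\tau(e^{-\tau z})$ into a sector strictly contained in the resolvent sector of $\Delta_h$ that is available on polygonal/convex-polyhedral domains. Once this is secured, the remainder is a clean symbol comparison between BDF-$k$ and BDF-$1$ combined with a single resolvent bound and the $L^\infty$ boundedness of $\tilde L_h$.
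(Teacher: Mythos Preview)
Your proof is correct and follows the same overall strategy as the paper's: subtract the two discrete elliptic problems, observe that the difference lies in $\mathring S_h$, invert the resolvent, and use the symbol mismatch bound $|\delta_\tau(e^{-\tau z}) - \tfrac{1-e^{-\tau z}}{\tau}| \le C\tau|z|^2$ together with the $L^\infty$ stability of $P_h$ and the uniform boundedness of the relevant solution map.

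The only genuine difference is a symmetric choice of which operator to invert. You write the difference equation with the BDF-$k$ symbol on the left and $\tilde L_h(e^{-\tau z})f$ on the right, then invert $(\delta_\tau(e^{-\tau z})-\Delta_h)^{-1}$; the paper instead keeps the BDF-$1$ symbol $\tfrac{1-e^{-\tau z}}{\tau}$ on the left and places $v_h=\tilde M_h(e^{-\tau z})f$ on the right, inverting $(\tfrac{1-e^{-\tau z}}{\tau}-\Delta_h)^{-1}$. The paper's choice has the mild advantage that, by Lemma~\ref{lemma:Euler_sec}, $\tfrac{1-e^{-\tau z}}{\tau}$ stays in the same sector $\Sigma_\theta$ as $z$, so the resolvent bound is applied in the narrower sector already fixed in \eqref{eq:theta}; your version invokes it in the wider sector $\Sigma_{\pi-\theta_k+\epsilon}$ containing the BDF-$k$ symbol. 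This is not a gap, however, since exactly this wider-sector resolvent estimate is already used (via Lemma~\ref{Lemma:z-bd}) to establish \eqref{Mhz-Linfty}, so you are not assuming anything beyond what the paper needs anyway. Either route yields the same $C\tau|z|$ bound.
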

\begin{proof}
Again, we use the change of variables $\zeta=e^{-\tau z}$.	For any fixed $z\in \Sigma_\theta^\tau$ and any given boundary data $f_h\in S_h(\partial\Omega)$, if we denote $v_h(\zeta):=\tilde M_h(\zeta) f_h$ and $w_h(\zeta):=\tilde L_h(\zeta) f_h$, by the definitions of $\tilde M_h$ and $\tilde G_h$, we know
	\begin{align*}
		\left\{
		\begin{aligned}
			&\bigg(\delta_\tau(e^{-\tau z}) v_h,\phi_h \bigg) + (\nabla v_h,\nabla \phi_h)=0 &&\forall\, \phi_h\in \mathring S_h, \\ 
			& v_h = f_h
			&&\mbox{on}\,\,\,\partial\Omega ,
		\end{aligned} 
		\right.	
	\end{align*}
	and
	\begin{align*}
		\left\{
		\begin{aligned}
			&\bigg(\frac{1 - e^{-\tau z}}{\tau} w_h,\phi_h \bigg) + (\nabla w_h,\nabla \phi_h)=0 &&\forall\, \phi_h\in \mathring S_h, \\ 
			& w_h = f_h
			&&\mbox{on}\,\,\,\partial\Omega .
		\end{aligned} 
		\right.	
	\end{align*}
	Upon subtraction, the following equation holds for all $\phi_h\in \mathring S_h(\Omega)$
	\begin{align}
		\bigg(\frac{1 - e^{-\tau z}}{\tau} (v_h - w_h), \phi_h \bigg)
		+
		(\nabla (v_h - w_h), \nabla \phi_h)
		=
		-\bigg(\bigg(\delta_\tau(e^{-\tau z}) - \frac{1 - e^{-\tau z}}{\tau} \bigg) v_h, \phi_h \bigg) , \notag
	\end{align}
	and by inverting the elliptic operator
	\begin{align}
		v_h - w_h = - \bigg(\frac{1 - e^{-\tau z}}{\tau} - \Delta_h \bigg)^{-1} P_h \bigg(\bigg(\delta_\tau(e^{-\tau z}) - \frac{1 - e^{-\tau z}}{\tau}\bigg) v_h \bigg) .\notag
	\end{align} 
	Consequently, from the resolvent estimate, $L^\infty$ stability of $P_h$, Lemma \ref{lemma:BDF_sec}--\ref{lemma:Euler_sec} and the mapping property of $\tilde M_h(\zeta)$, we get
	\begin{align}
		\| (\tilde M_h(\zeta) - \tilde L_h(\zeta)) f_h \|_{L^\infty(\Omega)} 
		&=\| v_h - w_h \|_{L^\infty(\Omega)} \notag\\
		&\leq C |z|^{-1} \tau |z|^2 \| \tilde M_h(\zeta) f_h \|_{L^\infty(\Omega)} \notag\\
		&\leq C \tau |z| \| f_h \|_{L^\infty(\partial\Omega)} . \notag
	\end{align} 
	Therefore, it follows
	\begin{align}
		\| \tilde M_h(e^{-\tau z}) - \tilde L_h(e^{-\tau z}) \|_{L^\infty(\partial\varOmega)\rightarrow L^\infty(\varOmega)}
		\leq C \tau |z| . \notag
	\end{align} 
\end{proof}
The solution of \eqref{semi-discrete-FEM-Laplce-2} can be represented as
\begin{align}\label{hat-uh-z}
&\hat u_{h,2}(z) \notag\\
&=\tilde L_h(e^{-\tau z}) \ztr \tilde g_h(e^{-\tau z})
+\bigg(\frac{1 - e^{-\tau z}}{\tau} - \varDelta_h\bigg)^{-1} P_h \bigg(\frac{1 - e^{-\tau z}}{\tau} - z\bigg)\hat u_{h,2}(z) \notag \\
&=\tilde L_h(e^{-\tau z}) \ztr \tilde g_h(e^{-\tau z})
+\bigg(\frac{1 - e^{-\tau z}}{\tau} - \varDelta_h\bigg)^{-1} P_h \bigg(\frac{1 - e^{-\tau z}}{\tau} - z\bigg)\hat u_{h,21}(z) \notag \\
&\quad\,
+ \bigg(\frac{1 - e^{-\tau z}}{\tau} - \varDelta_h\bigg)^{-1} \bigg(\frac{1 - e^{-\tau z}}{\tau} - z\bigg) (z-\varDelta_h)^{-1}e^{-zt_{N-1}} u_{h,2}(t_{N-1}).
\end{align}
By using the inverse Laplace transform and the expression in \eqref{hat-uh-z}, we have
\begin{align}\label{u_{h,2}-tn-repr}
u_{h,2}(t_N)
=&\frac{1}{2\pi i}
\int_{\Gamma_{\theta,\sigma}} 
\hat u_{h,2}(z) e^{t_Nz}\d z \notag \\
=&\frac{\tau}{2\pi i}
\int_{\Gamma_{\theta,\sigma}^\tau} 
\tilde L_h(e^{-\tau z}) \ztrt \tilde g_h(e^{-\tau z}) e^{t_Nz}\d z \notag \\
&
+\frac{\tau}{2\pi i}
\int_{\Gamma_{\theta,\sigma}\backslash\Gamma_{\theta,\sigma}^\tau} 
\tilde L_h(e^{-\tau z}) \ztrt \tilde g_h(e^{-\tau z}) e^{t_Nz}\d z \notag \\
&+
\frac{1}{2\pi i}
\int_{\Gamma_{\theta,\sigma}} 
\bigg(\frac{1 - e^{-\tau z}}{\tau} - \varDelta_h\bigg)^{-1} P_h \bigg(\frac{1 - e^{-\tau z}}{\tau} - z\bigg)\hat u_{h,21}(z) e^{t_Nz}\d z \notag\\
&+
\frac{1}{2\pi i}
\int_{\Gamma_{\theta,\sigma}} 
\bigg(\frac{1 - e^{-\tau z}}{\tau} - \varDelta_h\bigg)^{-1} \bigg(\frac{1 - e^{-\tau z}}{\tau} - z\bigg) (z-\varDelta_h)^{-1} u_{h,2}(t_{N-1}) e^{\tau z}\d z ,  
\end{align}
where $\sigma>\tau/\pi$ can be arbitrary and different in each of the integral above. 

Subtracting \eqref{u_{h,2}-tn-repr} from \eqref{u_{h,2}n-repr} and using the boundary condition $g_h^n = 0$ for $n\leq k-1$ and $n \geq N - 1$, we obtain
\begin{align}\label{BDF-FEM-difference}
&u_{h,2}^N - u_{h,2}(t_N) \notag\\
&=
\frac{\tau}{2\pi i}\int_{\Gamma_{\theta,\sigma}^\tau}
\bigg(1- \ztrt \bigg) \tilde M_h(e^{-\tau z}) \tilde g_h(e^{-\tau z}) e^{t_Nz} \d z \notag\\
&\quad+
\frac{\tau}{2\pi i}\int_{\Gamma_{\theta,\sigma}^\tau}
\ztrt \bigg(\tilde M_h(e^{-\tau z}) -  \tilde L_h(e^{-\tau z}) \bigg) \tilde g_h(e^{-\tau z}) e^{t_Nz} \d z \notag\\
&\quad
- \frac{\tau}{2\pi i}
\int_{\Gamma_{\theta,\sigma}\backslash\Gamma_{\theta,\sigma}^\tau} 
\ztrt \tilde L_h(e^{-\tau z}) \tilde g_h(e^{-\tau z}) e^{t_Nz}\d z \notag\\
&\quad - \frac{1}{2\pi i}
\int_{\Gamma_{\theta,\sigma}} 
\bigg(\frac{1 - e^{-\tau z}}{\tau} - \varDelta_h\bigg)^{-1} \bigg(\frac{1 - e^{-\tau z}}{\tau} - z\bigg)\hat u_{h,21}(z) e^{t_Nz}\d z  \notag\\
&\quad - \frac{1}{2\pi i}
\int_{\Gamma_{\theta,\sigma}} 
\bigg(\frac{1 - e^{-\tau z}}{\tau} - \varDelta_h\bigg)^{-1} \bigg(\frac{1 - e^{-\tau z}}{\tau} - z\bigg) (z-\varDelta_h)^{-1} u_{h,2}(t_{N-1}) e^{\tau z}\d z \notag \\
&= 
\sum_{j=k}^{N-2} F^{N-j}_h g_h^j 
+\sum_{j=k}^{N-2} G^{n-j}_h g_h^j 
-\sum_{j=k}^{N-2} L^{n-j}_h g_h^j \notag\\
&\quad
-\int_{t_{k-1}}^{t_{N-1}}K_{h}(t_N-s) P_h u_{h,2}(s)\d s 
-Q_{h}u_{h,2}(t_{N-1}), 
\end{align}
where 
\begin{align} 
F^{n}_h
&= \frac{\tau}{2\pi i}\int_{\Gamma_{\theta,\sigma}^\tau}
\bigg(1-\ztrt \bigg) \tilde M_h(e^{-\tau z}) e^{t_nz} \d z \label{Fnh} , \\
G^n_h
&=
\frac{\tau}{2\pi i}\int_{\Gamma_{\theta,\sigma}^\tau}
\ztrt \bigg(\tilde M_h(e^{-\tau z}) -  \tilde L_h(e^{-\tau z})\bigg) e^{t_n z} \d z \label{Gnh} , \\
L^{n}_h
&=
\frac{\tau}{2\pi i}
\int_{\Gamma_{\theta,\sigma}\backslash\Gamma_{\theta,\sigma}^\tau} 
\ztrt \tilde L_h(e^{-\tau z}) e^{t_nz}\d z , \label{Lnh}\\
K_h(s)
&=
\frac{1}{2\pi i}
\int_{\Gamma_{\theta,\sigma}} 
\bigg(\frac{1 - e^{-\tau z}}{\tau} - \varDelta_h\bigg)^{-1} \bigg(\frac{1 - e^{-\tau z}}{\tau} - z\bigg) e^{s z}\d z  , \label{Knh} \\
Q_h 
&=
\frac{1}{2\pi i}
\int_{\Gamma_{\theta,\sigma}} 
\bigg(\frac{1 - e^{-\tau z}}{\tau} - \varDelta_h\bigg)^{-1} \bigg(\frac{1 - e^{-\tau z}}{\tau} - z\bigg) (z-\varDelta_h)^{-1}  e^{\tau z}\d z . \label{Qnh}
\end{align}

First, by choosing $\sigma=t_n$ in \eqref{Fnh} and using \eqref{Mhz-Linfty} and \eqref{exp-tau-taylor1}, we have 
\begin{align} 
\| F^{n}_h \|_{L^\infty(\partial\varOmega)\rightarrow L^\infty(\varOmega)}
&\le
C\tau \int_{\Gamma_{\theta,\sigma}^\tau} |z|^2 \tau^2 
\| \tilde M_h(e^{-\tau z}) \|_{L^\infty(\partial\varOmega)\rightarrow L^\infty(\varOmega)} e^{t_n{\rm Re}(z)} |\d z | 
\notag \\
&\le
C \tau^3  \int_{\Gamma_{\theta,t_n}^{\tau,1}} |z|^2
e^{t_n {\rm Re}(z)} |\d z |  
+
C \tau^3  \int_{\Gamma_{\theta,t_n}^{\tau,2}} |z|^2
 e^{t_n {\rm Re}(z)} |\d z | \notag \\ 
 &\le
 C \tau^3  \int_{t_n^{-1}}^{+\infty} r^2
 e^{-t_n r |\cos\theta|} \d r  
 +
 C \tau^3  \int_{-\theta}^\theta t_n^{-3}
 e^{-|\cos\theta|} \d\theta \notag \\ 
&\le
\frac{C\tau^3}{t_n^3} 
= \frac{C}{n^3} . \notag
\end{align}
Hence, 
\begin{align}\label{Fh-ghj}
\bigg\| \sum_{j=k}^{N-2} F^{N-j}_h g_h^j \bigg\|_{L^\infty(\varOmega)}
&\le
C\sum_{j=k}^{N-2}  \frac{C}{(N-j)^3}
\max_{k\le n\le N-2}\| g_h^n \|_{L^\infty(\partial\varOmega)} \notag\\
&\le
C\max_{k\le n\le N-2}\| g_h^n \|_{L^\infty(\partial\varOmega)} .
\end{align}

Analogously, from \eqref{exp-tau-taylor1}, Lemma \ref{lemma:M-L} and choosing $\sigma = t_n$, we get 
\begin{align} 
	\| G^{n}_h \|_{L^\infty(\partial\varOmega)\rightarrow L^\infty(\varOmega)}
	&\le
	C\tau \int_{\Gamma_{\theta,\sigma}^\tau} 
	\| \tilde M_h(e^{-\tau z}) - \tilde L_h(e^{-\tau z}) \|_{L^\infty(\partial\varOmega)\rightarrow L^\infty(\varOmega)} e^{t_n{\rm Re}(z)} |\d z | 
	\notag \\
	&\le
	C \tau^2  \int_{\Gamma_{\theta,t_n}^{\tau,1}} |z|
	e^{t_n {\rm Re}(z)} |\d z |  
	+
	C \tau^2  \int_{\Gamma_{\theta,t_n}^{\tau,2}} |z|
	e^{t_n {\rm Re}(z)} |\d z | \notag \\ 
	&\le
	C \tau^2 \int_{t_n^{-1}}^{+\infty} r
	e^{-t_n r |\cos\theta|} \d r  
	+
	C \tau^2  \int_{-\theta}^\theta
	t_n^{-2}
	e^{-|\cos\theta|} \d\theta \notag \\
	&\le
	\frac{C\tau^2}{t_n^{2}} 
	= \frac{C}{n^{2}} . \notag
\end{align}
Therefore, it follows 
\begin{align}\label{Gh-ghj}
	\bigg\| \sum_{j=k}^{N-2} G^{N-j}_h g_h^j \bigg\|_{L^\infty(\varOmega)}
	&\le
	C\sum_{j=k}^{N-2}  \frac{C}{(N-j)^{2}}
	\max_{k\le n\le N-2}\| g_h^n \|_{L^\infty(\partial\varOmega)} \notag\\
	&\le
	C\max_{k\le n\le N-2}\| g_h^n \|_{L^\infty(\partial\varOmega)} .
\end{align}

Then, by choosing $\sigma=t_{n-1}$ in \eqref{Lnh} and using \eqref{Lhz-Linfty} and the estimates in \eqref{exp-tau-z1}, we have 
\begin{align*}
\|L^{n}_h\|_{L^\infty(\partial\varOmega)\rightarrow L^\infty(\varOmega)}
&\le
C\tau \int_{\Gamma_{\theta,\sigma}\backslash\Gamma_{\theta,\sigma}^\tau} 
e^{-\tau {\rm Re}(z)} \| \tilde L_h(e^{-\tau z}) \|_{L^\infty(\partial\varOmega)\rightarrow L^\infty(\varOmega)} e^{t_n{\rm Re}(z)} |\d z | \\
&\le
C\tau \int_{\Gamma_{\theta,\sigma}\backslash\Gamma_{\theta,\sigma}^\tau} 
e^{-t_{n-1}|z||\cos(\theta)|} |\d z | \\
&\le 
C\tau \int_{\frac{\pi}{\tau\sin(\theta)}}^{+\infty} 
e^{-t_{n-1} r |\cos(\theta)|} \d r \\ 
&\le 
C\int_{\frac{\pi}{\sin(\theta)}}^{+\infty} 
e^{-\frac{t_{n-1} |\cos(\theta)|}{\tau} \tilde r } \d \tilde r \quad\mbox{(change of variables)} \\ 
&\le 
C\frac{1}{(n-1) |\cos\theta|}
e^{-(n-1)\pi/|\tan(\theta)|} \quad\mbox{for}\,\,\, n\ge 2 , 
\end{align*}
and then 
\begin{align}\label{Mh-ghj}
\bigg\| \sum_{j=k}^{N-2} L^{N-j}_h g_h^j \bigg\|_{L^\infty(\varOmega)}
&\le
C\sum_{j=k}^{N-2} (N-1-j)^{-1} e^{-(N-1-j)\pi/|\tan(\theta)|} 
\max_{k\le n\le N-2}\| g_h^n \|_{L^\infty(\partial\varOmega)}  
\notag \\
&\le
C\max_{k\le n\le N-2}\| g_h^n \|_{L^\infty(\partial\varOmega)} .
\end{align}

Since $\tau|z|\ge \pi/\sin(\theta)$ for $z\in
\Gamma_{\theta,\sigma}\backslash\Gamma_{\theta,\sigma}^\tau$, by using the second inequality of \eqref{exp-tau-z} it is easy to verify that 
\begin{align*} 
\bigg|\frac{1-e^{-\tau z}}{\tau}-z\bigg|
\le
C\bigg|\frac{1-e^{-\tau z}}{\tau}\bigg|
\le
C\tau |z| \bigg|\frac{1-e^{-\tau z}}{\tau}\bigg|
\quad\forall\,z\in
\Gamma_{\theta,\sigma}\backslash\Gamma_{\theta,\sigma}^\tau .
\end{align*}
With this inequality and taking $\sigma = s$, we have
\begin{align*} 
	\| K_h(s) \|_{L^\infty(\varOmega)\rightarrow L^\infty(\varOmega)}
	&\le 
	C \int_{\Gamma_{\theta,\sigma}^\tau} 
	\bigg|\frac{1-e^{-\tau z}}{\tau}\bigg|^{-1} \tau |z|^2 e^{s{\rm Re}(z)} |\d z| \notag\\
	&\quad
	+ C \int_{\Gamma_{\theta,\sigma}\backslash \Gamma_{\theta,\sigma}^\tau} 
	\bigg|\frac{1-e^{-\tau z}}{\tau}\bigg|^{-1} \tau |z|
	\bigg|\frac{1-e^{-\tau z}}{\tau}\bigg| e^{s{\rm Re}(z)} |\d z| \notag\\
	&\le 
	C\tau \int_{\Gamma_{\theta,s}} |z| e^{s{\rm Re}(z)} |\d z| \notag\\
	&\le 
	C\tau \int_{s^{-1}}^{+\infty} r e^{-s r|\cos(\theta)|} \d r 
	+C\tau \int_{-\theta}^{\theta} s^{-2} e^{-|\cos\theta|} \d \varphi \notag\\
	&\le C\tau s^{-2} \quad\mbox{for}\,\,\, s\ge \tau . 
\end{align*}
Hence, 
\begin{align}\label{Kh-uhs}
\bigg\| \int_{t_{k-1}}^{t_{N-1}}K_{h}(t_N-s) P_h u_{h,2}(s)\d s \bigg\|_{L^\infty(\varOmega)}
&\le
\int_{t_{k-1}}^{t_{N-1}} C\tau (t_N-s)^{-2} \| u_{h,2}(s) \|_{L^\infty(\varOmega)} \d s
\notag \\[2pt]
&\le
C \| u_{h,2} \|_{L^\infty(t_{k-1},t_{N-1};L^\infty(\varOmega))} \notag \\[2pt]
&\le
C\| g_h \|_{L^\infty(t_{k-1},t_{N-1};L^\infty(\partial\varOmega))} \notag \\
&=
C\max_{k\le n\le N-2} \| g_h^n \|_{L^\infty(\partial\varOmega)}  ,
\end{align}
where we have used Theorem \ref{thm:semi-WMP} in the last inequality. 

Similarly, by choosing $\sigma=\tau$ in the expression of $Q_h$, we have 
\begin{align*} 
\| Q_h \|_{L^\infty(\varOmega)\rightarrow L^\infty(\varOmega)}
&\le 
C \int_{\Gamma_{\theta,\sigma}^\tau} 
\bigg|\frac{1-e^{-\tau z}}{\tau}\bigg|^{-1} \tau |z|^2 |z|^{-1} e^{\tau {\rm Re}(z)} |\d z| \notag\\
&\quad
+ C \int_{\Gamma_{\theta,\sigma}\backslash \Gamma_{\theta,\sigma}^\tau} 
\bigg|\frac{1-e^{-\tau z}}{\tau}\bigg|^{-1} \tau |z|
\bigg|\frac{1-e^{-\tau z}}{\tau}\bigg| |z|^{-1} e^{\tau {\rm Re}(z)} |\d z| \notag\\
&\le 
C\tau \int_{\Gamma_{\theta,\tau}} e^{\tau{\rm Re}(z)} |\d z| \notag\\
&\le 
C\tau \int_{\tau^{-1}}^{+\infty} e^{-\tau r|\cos(\theta)|} \d r 
+C\tau \int_{-\theta}^{\theta} \tau^{-1} e^{-|\cos\theta|} \d \varphi \notag\\
&\le C. 
\end{align*}
Hence,
\begin{align}\label{Qh-uhs}
\|Q_{h}u_{h,2}(t_{N-1})\|_{L^\infty(\varOmega)}
\le
C\|u_{h,2}(t_{N-1})\|_{L^\infty(\varOmega)} 
&\le
C\| g_h \|_{L^\infty(0,t_{N-1};L^\infty(\partial\varOmega))} \notag \\
&=
C\max_{k\le n\le N-2} \| g_h^n \|_{L^\infty(\partial\varOmega)} , 
\end{align}
where we have used Theorem \ref{thm:semi-WMP} again in the last inequality. 

Finally, substituting \eqref{Fh-ghj}--\eqref{Qh-uhs} into \eqref{BDF-FEM-difference}, we obtain 
\begin{align}\label{BDF-FEM-diff-f1}
\| u_{h,2}^N - u_{h,2}(t_N) \|_{L^\infty(\varOmega)}
\le
C\max_{k\le n\le N-2} \| g_h^n \|_{L^\infty(\partial\varOmega)} . 
\end{align}
Then, using the triangle inequality and Theorem \ref{thm:semi-WMP}, we obtain \eqref{BDF-FEM-f}.

\subsection{Proof of \eqref{Eq-uh3-Linfty}}
For $k \leq N \leq 2k - 1$,  at the time $t_k$, we solve for $u_h^k\in \mathring S_h$ satisfying
\begin{align}
	\bigg( \frac{u_h^{k}}{\tau}, v_h\bigg) + (\nabla u_h^n,\nabla v_h) = -\bigg( \frac{1}{\tau} \sum_{j=1}^k \delta_j u_h^{n-j} ,v_h\bigg) \forall\, v_h\in \mathring S_h , \notag
\end{align}
which can be equivalently written as
\begin{align}
	u_h^{k} = -\bigg( \frac{1}{\tau} -  \Delta_h \bigg)^{-1} P_h \frac{1}{\tau} \sum_{j=1}^k \delta_j u_h^{k-j} . \notag
\end{align}
Then,
\begin{align}
	\| u_h^{k} \|_{L^\infty(\Omega)} 
	&\leq  \bigg\| \bigg( \frac{1}{\tau} -  \Delta_h \bigg)^{-1}  \bigg\|_{L^\infty(\Omega) \rightarrow L^\infty(\Omega)} \bigg\| P_h \frac{1}{\tau} \sum_{j=1}^k \delta_j u_h^{k-j}  \bigg\|_{L^\infty(\Omega)} \notag\\
	&\leq C \sup_{0\leq j \leq k-1} \| u_{h}^j \|_{L^\infty(\Omega)} . \notag
\end{align}
Analogously, at the time $t_{k+1}$, we have
\begin{align}
	u_h^{k+1} = -\bigg( \frac{1}{\tau} -  \Delta_h \bigg)^{-1} P_h \frac{1}{\tau} \sum_{j=1}^k \delta_j u_h^{k+1-j} , \notag
\end{align}
with
\begin{align}
	\| u_h^{k+1} \|_{L^\infty(\Omega)} 
	\leq C \sup_{1\leq j \leq k} \| u_{h}^j \|_{L^\infty(\Omega)} \leq C \sup_{0\leq j \leq k-1} \| u_{h}^j \|_{L^\infty(\Omega)} . \notag
\end{align}
Recursively, we conclude
\begin{align}
	\| u_h^{N} \|_{L^\infty(\Omega)} 
	\leq C \sup_{0\leq j \leq k-1} \| u_{h}^j \|_{L^\infty(\Omega)} , \notag
\end{align}
for any $k\leq N\leq 2k-1$.

If $N \geq 2k$, multiplying \eqref{Eq-uh3} by $\zeta^n$, summing up the results for $n=k,k+1\dots,$ and using the identity \eqref{eq:gen_iden}, we obtain the following elliptic equation for the generating function $\tilde u_{h,3}(\zeta)\in \mathring S_h(\Omega)$ satisfying zero Dirichlet boundary condition: 
$$
\bigg( \delta_\tau(\zeta) \tilde u_{h,3}(\zeta),v_h\bigg) + (\nabla \tilde u_{h,3}(\zeta),\nabla v_h)= \bigg( - \frac{1}{\tau} \sum_{n=k}^{2k-1} \zeta^n \sum_{j=0, n-j < k}^k \delta_j u_{h,3}^{n-j},v_h\bigg) ,
$$
whose solution can be represented by
$$
\tilde u_{h,3}(\zeta) 
= - \bigg(\delta_\tau(\zeta) - \varDelta_h\bigg)^{-1} 
 P_h \frac{1}{\tau} \sum_{n=k}^{2k-1} \zeta^n \sum_{j=0, n-j < k}^k \delta_j u_{h,3}^{n-j} .
$$
$\tilde u_{h, 3}(\zeta)$ is analytic in $\D = \{ \zeta\in\C : |\zeta|< 1 \}$ due to the standard energy estimate. By Cauchy's integral formula and the deformation of contour (cf. \eqref{u_{h,2}n-repr}), for any $0<\rho<1$ we have 
\begin{align*}
u_{h,3}^N 
&= 
- \frac{1}{2\pi i}
\int_{|\zeta|=\rho}
\bigg(\delta_\tau(\zeta) - \varDelta_h\bigg)^{-1} 
P_h \frac{1}{\tau} \sum_{n=k}^{2k-1} \zeta^n \sum_{j=0, n-j < k}^k \delta_j u_{h,3}^{n-j} \zeta^{-N-1} \d \zeta \notag\\ 
&= 
- \frac{1}{2\pi i}
\int_{\Sigma_{\theta,\sigma}^\tau}
\bigg(\delta_\tau(e^{-\tau z}) - \varDelta_h\bigg)^{-1} 
P_h \sum_{n=k}^{2k-1} \sum_{j=0, n-j < k}^k \delta_j u_{h,3}^{n-j} e^{t_{N-n} z} \d z .
\end{align*}
By choosing $\sigma=t_{N-n}$ ($N - n \geq 2k - (2k - 1) = 1$) for each term of the summation in the representation formula above, it follows 
\begin{align}
	&\| u_{h,3}^N \|_{L^\infty(\Omega)} \notag\\
	&\leq C \int_{\Gamma_{\theta,\sigma}^\tau}
	\| (\delta_\tau(\zeta) - \Delta_h)^{-1} \|_{L^\infty(\Omega) \rightarrow L^\infty(\Omega)} \sum_{n=k}^{2k-1} \sum_{j=0, n-j < k}^k \| P_h  \delta_j u_{h,3}^{n-j} \|_{L^\infty(\Omega)} e^{t_{N-n} {\rm Re} (z)}\d z \notag\\
	&\leq C \sup_{0\leq j \leq k-1} \| u_{h}^j \|_{L^\infty(\Omega)} \sum_{n=k}^{2k-1} \bigg(\int_{\Gamma_{\theta,t_{N-n}}^{\tau,1}} + \int_{\Gamma_{\theta,t_{N-n}}^{\tau, 2}} \bigg)
	|z|^{-1} e^{t_{N-n}{\rm Re} (z)}\d z \notag\\
	&\leq C \sup_{0\leq j \leq k-1} \| u_{h}^j \|_{L^\infty(\Omega)} \sum_{n=k}^{2k-1} \bigg(\int_{t_{N-n}^{-1}}^{+\infty} 
	r^{-1} e^{-t_{N-n} r|\cos\theta|} \d r + \int_{-\theta}^\theta
	e^{-|\cos\theta|} \d\theta \bigg) \notag\\
	&\leq C \sup_{0\leq j \leq k-1} \| u_{h}^j \|_{L^\infty(\Omega)} , \notag
\end{align}
where in the second inequality we have applied the resolvent estimate (Lemma \ref{lemma:BDF_sec}) and the $L^\infty$ stability of $P_h$.

This proves \eqref{Eq-uh3-Linfty}. The proof of Theorem \ref{THM:BDF-FEM} is complete.

\section{Conclusions and further discussions}
\setcounter{equation}{0}

We have proved the weak maximum principle of semi-discrete FEM and fully discrete FEM with $k$-step BDF time-stepping method for $k=1,\dots,6$, using the techniques of generating polynomials and the discrete inverse Laplace transform (Cauchy integral formula), which requires using a uniform time step size. Therefore, the analysis in this article cannot be readily extended to $k$-step BDF methods or other single-step methods (such as the Runge--Kutta methods or discontinous Galerkin (dG) methods) with variable time step sizes $\tau_n$, $n=1,\dots,N$. For the dG time-stepping method with possibly variable step sizes satisfying the following conditions:
\begin{enumerate}
  \item There are constants $c,\beta>0$ independent of the maximal step size $\tau$ such that $\tau_{\min}\ge c\tau^\beta$. 
  
  \item There is a constant $\kappa>0$ independent on $\tau$ such that 
    $
    \kappa^{-1}\le\frac{\tau_n}{\tau_{n+1}}\le \kappa 
    $ for $n=1,\dots,N-1$. 
  \item It holds $\tau\le\frac{1}{4}T$, 
\end{enumerate} 
an application of the discrete semigroup estimates in \cite{Ley17} can yield the following result:
$$
\| u_{kh}\|_{L^\infty(I;L^\infty(\Omega))}\le C\ln{\frac{T}{\tau}}\, \| g_h\|_{L^\infty(I;L^\infty(\partial \Omega))}, 
$$
with an additional logarithmic factor depending on the maximal step size $\tau$. We present a proof for dG(0) in Appendix. The general dG(k) can be treated similarly by using the approach from \cite{Ley17}. 
The proof of weak maximum principle of dG with variable step sizes without the logarithmic factor is interesting and nontrivial.

\section*{Acknowledgment} 
We thank the anonymous referee for the valuable comments. The work of G. Bai and B. Li is supported in part by the Research Grants Council of the Hong Kong Special Administrative Region, China (GRF project no. PolyU15300519) and an internal grant at The Hong Kong Polytechnic University (Project ID: P0045404).  

\appendix
\section*{\large Appendix: Weak maximum principle of dG with variable step sizes}
\renewcommand{\theequation}{A.\arabic{equation}}
\renewcommand{\thetheorem}{A}
\renewcommand{\theproposition}{A}
\renewcommand{\thefigure}{A}
\setcounter{equation}{0}
\setcounter{theorem}{0}
\normalsize

The space-time finite element space of degree $0$ in time and degree $r\ge 1$ in space is defined as 
\begin{equation} \label{def: space_time}
X_{\tau,h}^{0,r} =\{v_{\tau h} :\ v_{\tau h}|_{I_n}\in \Ppol{0}(\mathring S_h), \ n=1,2,\dots,N, \,\,\,  r\geq 1 \}.
\end{equation}
The dG($0$) method for the homogeneous parabolic equation 
\begin{align}\label{PDE HOMO}
\left\{
\begin{aligned}
&\partial_t v - \Delta v=0  &&\mbox{in}\,\,\,(0,T] \times \Omega,\\ 
&v=0 &&\mbox{on}\,\,\,[0,T] \times \partial\Omega ,\\
&v|_{t=0}=v_0 &&\mbox{in}\,\,\,\Omega .
\end{aligned}
\right.	
\end{align}
can be written as follows: For the given $v_0\in L^p(\Omega)$, find $v_{\tau h}\in X_{\tau,h}^{0,r}$ such that 
\begin{equation}\label{eq: homogeneous dg0 parabolic one step fully}
\begin{aligned}
v_{\tau h,1}-\tau_1\Delta_h v_{\tau h,1}&=v_0,\\
v_{\tau h,m}-\tau_m\Delta_h v_{\tau h,m}&=v_{\tau h,m-1}, \quad m=2,3,\dots,M.
\end{aligned}
\end{equation}
The following result was shown in \cite{Ley17}.
\begin{proposition}[Discrete semigroup estimates in $L^p(\Omega)$]\label{prop: homogenous laplaca}
Let $v_{\tau h}\in X_{\tau,h}^{0,r}$ be the solution of \eqref{eq: homogeneous dg0 parabolic one step fully} with $v_0 \in L^p(\Omega)$, $ 1\le p\le \infty$. Then there exists a constant $C$ independent of $\tau$ such that
$$
\|v_{\tau h,m}\|_{L^p(\Omega)} + (t_m-t_l)\|\Delta_h v_{\tau h,m}\|_{L^p(\Omega)}\le C\|v_{\tau h,l}\|_{L^p(\Omega)},\quad \forall m>l\ge 1.
$$
\end{proposition}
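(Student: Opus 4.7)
The plan is to iterate the one-step recurrence \eqref{eq: homogeneous dg0 parabolic one step fully} to write
$$v_{\tau h,m} = \prod_{j=l+1}^m (I - \tau_j \Delta_h)^{-1} v_{\tau h,l},$$
and then encode the product through the functional calculus for the finite-dimensional operator $\Delta_h:\mathring S_h\to \mathring S_h$, whose spectrum lies on the negative real axis. A Hankel-contour representation
$$v_{\tau h,m} = \frac{1}{2\pi i}\int_{\Gamma_{\theta,\sigma}} f_{l,m}(\lambda)(\lambda-\Delta_h)^{-1} v_{\tau h,l}\,d\lambda, \qquad f_{l,m}(\lambda) := \prod_{j=l+1}^m \frac{1}{1-\tau_j\lambda},$$
is valid for any $\theta\in(\pi/2,\pi)$, since the poles $\lambda = 1/\tau_j$ of $f_{l,m}$ lie on the positive real axis and $f_{l,m}$ decays fast enough at infinity to close the contour. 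I would choose $\sigma^{-1}=1/(t_m-t_l)$. Applying $\Delta_h$ to both sides inserts an extra factor $\lambda$ in the integrand, giving a parallel representation for $\Delta_h v_{\tau h,m}$.

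The first ingredient I would use is the uniform $L^p$ resolvent estimate $\|(\lambda-\Delta_h)^{-1}\|_{L^p(\Omega)\to L^p(\Omega)}\le C/|\lambda|$ for $\lambda\in\Sigma_\theta$ and $p\in[1,\infty]$, with $C$ independent of $h$. The $p=\infty$ case is \cite[Theorem 3.1]{Li22}, which has already been invoked in Section \ref{sec: fully discrete}; the $p=1$ case follows by duality using that $\Delta_h$ is $L^2$-self-adjoint, and intermediate $p$ by Riesz--Thorin interpolation against the trivial $L^2$ bound. The second ingredient is a pointwise symbol bound: on a ray $\lambda=re^{\pm i\theta}$, setting $c:=-\cos\theta>0$, expansion gives $|1-\tau_j\lambda|^2\ge(1+c\tau_j r)^2+\tau_j^2 r^2\sin^2\theta$ and hence
$$|f_{l,m}(\lambda)|\le \prod_{j=l+1}^m \frac{1}{1+c\tau_j r}.$$

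For the stability half $\|v_{\tau h,m}\|_{L^p}\le C\|v_{\tau h,l}\|_{L^p}$, I substitute both ingredients into the contour representation and rescale by $s=(t_m-t_l)r$, setting $\alpha_j=\tau_j/(t_m-t_l)$ so that $\sum_j\alpha_j=1$. The task then reduces to bounding $\int_1^\infty s^{-1}\prod_j(1+c\alpha_j s)^{-1}\,ds$, for which the crude inequality $\prod_j(1+c\alpha_j s)\ge 1+cs$ already suffices after partial fractions, while the arc contribution is immediate from $|f_{l,m}|\le 1/(1+c)$ on $|\lambda|=1/(t_m-t_l)$. For the smoothing half $(t_m-t_l)\|\Delta_h v_{\tau h,m}\|_{L^p}\le C\|v_{\tau h,l}\|_{L^p}$, the extra $|\lambda|$ in the integrand cancels the $|\lambda|^{-1}$ from the resolvent estimate, so one must control $\int_{\Gamma_{\theta,\sigma}}|f_{l,m}(\lambda)|\,|d\lambda|$ directly, and here the naive bound $1+cs$ is no longer integrable. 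This is the main obstacle: one has to harvest the genuinely multiplicative decay of $f_{l,m}$. I would split the integration at $s^\ast:=1/(c\max_j\alpha_j)$. On $[0,s^\ast]$ every argument $c\alpha_j s$ lies in $[0,1]$, and applying $\ln(1+x)\ge(\ln 2)x$ factor-by-factor upgrades $\prod_j(1+c\alpha_j s)$ to $\exp(c(\ln 2)s)$, giving uniform exponential integrability; on the tail $[s^\ast,\infty)$ the quasi-uniformity hypothesis $\tau_j\sim\tau_{j-1}$ stated in condition (2) of the conclusion section forces $\alpha_j\sim 1/(m-l)$, which turns the product into $(1+cs/(m-l))^{m-l}$ and delivers the required polynomial-then-exponential tail decay. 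Summing the two contributions and rescaling back by $t_m-t_l$ produces the advertised smoothing factor $(t_m-t_l)^{-1}$.
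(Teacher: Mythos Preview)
The paper does not prove this proposition; it simply records that the result was shown in \cite{Ley17} and quotes it for later use in Theorem~A. So there is no argument in the paper to compare yours against directly. That said, your outline is close in spirit to the standard contour-integral proof used for such discrete-semigroup bounds, and the stability half is correctly set up: the product representation, the Dunford contour, the $L^p$ resolvent estimate (obtained from the $L^\infty$ case in \cite{Li22} by duality and interpolation), and the elementary inequality $\prod_j(1+c\alpha_js)\ge 1+cs$ all combine to give $\|v_{\tau h,m}\|_{L^p}\le C\|v_{\tau h,l}\|_{L^p}$ exactly as you describe.

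The smoothing half, however, has a real gap. You argue that on the tail $[s^\ast,\infty)$ the local quasi-uniformity hypothesis $\kappa^{-1}\le\tau_n/\tau_{n+1}\le\kappa$ ``forces $\alpha_j\sim 1/(m-l)$''. This is not true: condition~(2) only controls \emph{consecutive} ratios, so the $\alpha_j$ can vary geometrically across the window $\{l+1,\dots,m\}$. For instance $\tau_j=2^{-j}$ satisfies condition~(2) with $\kappa=2$, yet $\alpha_{l+1}\approx\tfrac12$ while $\alpha_m\approx 2^{-(m-l)}$, so the $\alpha_j$ are nowhere near uniform. With only a lower bound $\alpha_{\min}\ge \kappa^{-(m-l)}/(m-l)$ available, your tail estimate degenerates to a bound of order $\kappa^{m-l}$, which is useless. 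More generally, the crude absolute-value bound $\int_{\Gamma_{\theta,\sigma}}|f_{l,m}(\lambda)|\,|d\lambda|$ is \emph{not} uniformly controlled for highly nonuniform step sizes (already for $m-l=2$ with $\alpha_1=1-\varepsilon$, $\alpha_2=\varepsilon$, one computes $\int_1^\infty(1+c(1-\varepsilon)s)^{-1}(1+c\varepsilon s)^{-1}\,ds\sim c^{-1}\ln(1/\varepsilon)$). The bound on the \emph{operator} still holds---for a scalar $\Delta_h=-\mu$ one checks directly that $(t_m-t_l)\,\mu\prod_j(1+\tau_j\mu)^{-1}\le 1$---but capturing this in $L^p$ requires more than taking absolute values inside the contour integral; the argument in \cite{Ley17} handles this point differently, and you would need to consult that reference (or supply an alternative device exploiting the cancellation in the Dunford integral) to close the smoothing estimate.
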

Now we consider $u_{\tau h}\in X_{\tau,h}^{0,r}$ to be the dG($0$) solution to the parabolic equation with nonzero right hand side $f$, i.e., $u_{\tau h}$ satisfies,
\begin{equation}\label{eq: one step dG0 inhomogeneous}
\begin{aligned}
u_{kh,1}-\tau_1\Delta_h u_ {kh,1} &= \tau_1P_hf_1,\\
u_{kh,m}-\tau_m\Delta_h u_ {kh,m} &= u_{kh,m-1}+\tau_mP_h f_m,\quad m=2,3,\dots,M,
\end{aligned}
\end{equation}
where
$$
f_m(\cdot)=\frac{1}{\tau_m}\int_{I_m}f(t,\cdot)dt
$$
and $P_h: L^2(\Omega)\to \mathring S_h$ is the $L^2$ projection. 
Since $f_m$  is  the $L^2$ projection  of $f$ onto the piecewise constant functions on each subinterval $I_m$, we have
\begin{equation} \label{eq: estimate for fm in lp}
\max_{1\le m\le M}\|f_m\|_{L^p(\Omega)}\le C\|f\|_{L^\infty(I;L^p(\Omega))}, \quad 1\le p\le\infty.
\end{equation}
Using \eqref{eq: one step dG0 inhomogeneous}, we can write the dG($0$) solution as
\begin{equation}\label{dg0 inhomogeneous}
u_{kh,m}=\sum_{l=1}^m \tau_l\left(\prod_{j=1}^{m-l+1}r(-\tau_{m-j+1}\Delta_h)\right)P_hf_l,\quad m=1,2,\dots,M,
\end{equation}
where $r(z)=(1+z)^{-1}.$

Consider the parabolic equation 
\begin{align}\label{PDE}
\left\{
\begin{aligned}
&\partial_t u - \Delta u=0 &&\mbox{in}\,\,\,(0,T] \times \Omega,\\ 
&u=g &&\mbox{on}\,\,\,[0,T] \times \partial\Omega ,\\
&u|_{t=0}=0 &&\mbox{in}\,\,\,\Omega .
\end{aligned}
\right.	
\end{align}
With the help of the previous result we establish:
\begin{theorem}[Weak maximum principle of dG(0)]\label{thm: maximal parabolic}
Let $u_0=0$. Then, there exists a constant $C$ independent of $k$ and $h$ such that for 
  $u_{kh}$ be the fully discrete dG(0) solution to \eqref{PDE}, we have
$$
\| u_{kh}\|_{L^\infty(I;L^\infty(\Omega))}\le C\ln{\frac{T}{\tau}}\| g_h\|_{L^\infty(I;L^\infty(\partial \Omega))},
$$
where $g_h$ is some finite element approximation of $g$. 
\end{theorem}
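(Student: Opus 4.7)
The plan is to reduce the boundary-value dG(0) problem to a dG(0) problem on $\mathring S_h$ with a controllable forcing, and then to exploit the parabolic smoothing provided by Proposition~A. Introduce the nodal boundary lift $\tilde g_h^m\in S_h$, equal to $g_h^m$ at boundary nodes and zero at interior nodes, and write $u_{kh,m}=\tilde g_h^m+w_{kh,m}$ with $w_{kh,m}\in\mathring S_h$. Testing the dG(0) equation against $v_h\in\mathring S_h$ gives
$$
(w_{kh,m}-w_{kh,m-1},v_h)+\tau_m(\nabla w_{kh,m},\nabla v_h)=-(\tilde g_h^m-\tilde g_h^{m-1},v_h)-\tau_m(\nabla\tilde g_h^m,\nabla v_h),
$$
and, with $w_{kh,0}=0$ and $E^{l,m}:=\prod_{j=l}^{m}(I-\tau_j\Delta_h)^{-1}$, Duhamel's formula yields
$$
w_{kh,m}=-\sum_{l=1}^m E^{l,m}P_h(\tilde g_h^l-\tilde g_h^{l-1})-\sum_{l=1}^m \tau_l E^{l,m}B_h\tilde g_h^l,
$$
where $B_h\colon S_h\to\mathring S_h$ is the Riesz operator defined by $(B_h v,\chi)=(\nabla v,\nabla\chi)$ for $\chi\in\mathring S_h$.

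The decisive step is to recast every term as $\tau_l\Delta_h E^{l,m}$ applied to an $L^\infty$-bounded function. For the second sum, let $H_hg_h^l\in S_h$ be the discrete harmonic extension of $g_h^l$ (zero discrete Laplacian on $\mathring S_h$ test functions, boundary trace $g_h^l$), and set $\phi_l:=\tilde g_h^l-H_hg_h^l\in\mathring S_h$. Testing against $\chi\in\mathring S_h$ shows $-\Delta_h\phi_l=B_h\tilde g_h^l$, while the elliptic weak maximum principle \cite{Schatz80,Leykekhman_Li_2021} gives $\|H_hg_h^l\|_{L^\infty(\Omega)}\le C\|g_h^l\|_{L^\infty(\partial\Omega)}$ and hence $\|\phi_l\|_{L^\infty(\Omega)}\le C\|g_h^l\|_{L^\infty(\partial\Omega)}$. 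For the first sum, Abel's summation by parts combined with the algebraic identity $E^{l+1,m}-E^{l,m}=-\tau_l\Delta_h E^{l,m}$ rewrites it as
$$
-E^{m,m}P_h\tilde g_h^m-\sum_{l=1}^{m-1}\tau_l\Delta_h E^{l,m}P_h\tilde g_h^l,
$$
in which $\|P_h\tilde g_h^l\|_{L^\infty}\le C\|g_h^l\|_{L^\infty(\partial\Omega)}$ by the $L^\infty$ stability of $P_h$ (possibly losing the $\log(1/h)$ factor in the 3D, $r=1$ case, as in the remark following Theorem~\ref{thm:semi-WMP}).

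With these reductions it suffices to control $\|\tau_l\Delta_h E^{l,m}\psi_l\|_{L^\infty}$ where $\|\psi_l\|_{L^\infty}\le C\|g_h^l\|_{L^\infty(\partial\Omega)}$. Since $\Delta_h$ commutes with each resolvent and $(I-\tau_l\Delta_h)^{-1}$ is $L^\infty$-stable (single-step case of Proposition~A), the smoothing estimate $(t_m-t_l)\|\Delta_h E^{l+1,m}v\|_{L^\infty}\le C\|v\|_{L^\infty}$ from Proposition~A yields
$$
\|\tau_l\Delta_h E^{l,m}\psi_l\|_{L^\infty(\Omega)}\le\frac{C\tau_l}{t_m-t_l}\,\|\psi_l\|_{L^\infty(\Omega)},\qquad 1\le l<m,
$$
and the $l=m$ boundary contributions are bounded by $L^\infty$ stability of a single resolvent. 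Summing and comparing the Riemann sum $\sum_{l=1}^{m-1}\tau_l/(t_m-t_l)$ with $\int_0^{t_{m-1}}(t_m-t)^{-1}\,dt=\log\bigl((t_m-t_1)/\tau_m\bigr)$, controlled by $C\log(T/\tau)$ under the step-size conditions (1)--(3), one obtains $\|w_{kh,m}\|_{L^\infty(\Omega)}\le C\log(T/\tau)\,\|g_h\|_{L^\infty(I;L^\infty(\partial\Omega))}$, and the triangle inequality together with $\|\tilde g_h^m\|_{L^\infty}\le \|g_h^m\|_{L^\infty(\partial\Omega)}$ completes the proof.

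The main obstacle is the forcing term $B_h\tilde g_h^l$: a direct $L^\infty$ bound is of order $h^{-2}\|g_h^l\|_{L^\infty(\partial\Omega)}$, which the smoothing factor $\tau_l/(t_m-t_l)$ cannot absorb. The identification $B_h\tilde g_h^l=-\Delta_h\phi_l$ with $\phi_l$ majorized by the elliptic weak maximum principle is the one structural ingredient that transfers a discrete Laplacian onto the dG(0) semigroup and makes the parabolic smoothing effective, at the unavoidable price of the logarithmic factor $\log(T/\tau)$ coming from the harmonic-type sum $\sum 1/j$.
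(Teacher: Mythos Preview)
Your proof is correct and follows the same essential strategy as the paper: reduce to terms of the form $\tau_l\Delta_h E^{l,m}\psi_l$ with $\psi_l$ bounded in $L^\infty$ via the elliptic weak maximum principle, apply the discrete smoothing estimate of Proposition~A, and sum to obtain the $\log(T/\tau)$ factor.

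The only difference is in the decomposition. The paper subtracts the discrete \emph{harmonic} extension $G_h=H_hg_h$ from the outset; since $(\nabla G_h,\nabla\chi)=0$ for $\chi\in\mathring S_h$, the gradient forcing never appears and only the time-difference term remains, which is handled by a single Abel summation. You instead subtract the \emph{nodal} lift $\tilde g_h$, which produces an additional gradient forcing $B_h\tilde g_h^l$; you then recover the paper's setup at that stage by writing $B_h\tilde g_h^l=-\Delta_h(\tilde g_h^l-H_hg_h^l)$, effectively reintroducing the harmonic extension. Both routes use the same two ingredients (elliptic weak maximum principle for $H_h$, Abel summation identity $E^{l+1,m}-E^{l,m}=-\tau_l\Delta_h E^{l,m}$) and yield identical estimates; the paper's choice is simply the more economical one, avoiding the detour through the nodal lift.
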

\begin{proof}
First we define Let $G_h$ be the discrete harmonic extension of $g_h$, i.e. $G_h(t)$ satisfies (pointwise in $t$)
\begin{align}\label{PDE harmonic}
\left\{
\begin{aligned}
&(\nabla G_h,\nabla \chi)_\Omega =0 &&\forall \chi\in \mathring S_h,\\ 
&G_h=g_h &&\mbox{on}\,\,\, \partial\Omega.
\end{aligned}
\right.	
\end{align}
Then  the difference $v_h=u_h-G_h\in \mathring S_h$ (zero on $\partial \Omega$) satisfies the semi-discrete problem, 
\begin{align}\label{PDE semi-discrete new}
\left\{
\begin{aligned}
&(\partial_t v_h,\chi)_\Omega +(\nabla v_h,\nabla \chi)_\Omega =(\partial_t G_h,\chi)_\Omega &&\mbox{in}\,\,\,(0,T] \times \Omega,\\ 
&v_h|_{t=0}=0 &&\mbox{in}\,\,\,\Omega ,
\end{aligned}
\right.	
\end{align}
for any $\chi\in \mathring S_h$, and as a result the fully discrete difference
$v_{kh}=u_{kh}-P_hG_{kh}\in X_{k,h}^{0,r}$, where $P_h$ is the $L^2$-orthogonal projection onto $\mathring S_h$, using \eqref{eq: one step dG0 inhomogeneous}  satisfies 
$$
v_{kh,m}=\sum_{l=1}^m \tau_l\left(\prod_{j=1}^{m-l+1} r(-\tau_{m-j+1}\Delta_h)\right)\frac{P_hG_{kh,l}-P_hG_{kh,l-1}}{\tau_l},\quad m=1,2,\dots,M,
$$
where $r(z)=(1+z)^{-1}$ and with the convention that $G_{kh,0}=0$.
Using the discrete integration by parts, we have 
$$
\begin{aligned}
&\sum_{l=1}^m \left(\prod_{j=1}^{m-l+1} r(-\tau_{m-j+1}\Delta_h)\right)\left(P_hG_{kh,l}-P_hG_{kh,l-1}\right)\\
=&r(-\tau_{m}\Delta_h)P_hG_{kh,m}-\prod_{j=1}^{m}r(-\tau_{m-j+1}\Delta_h)P_hG_{kh,0}\\
&+\sum_{l=2}^{m-1} (\operatorname{Id}-r(-\tau_{l}\Delta_h))\left(\prod_{j=1}^{m-l+1}r(-\tau_{m-j+1}\Delta_h)\right)P_hG_{kh,l}\\
=&r(-\tau_{m}\Delta_h)P_hG_{kh,m}+\sum_{l=2}^{m-1} \tau_{l}\Delta_hr(-\tau_{l}\Delta_h)\left(\prod_{j=1}^{m-l+1}r(-\tau_{m-j+1}\Delta_h)\right)P_hG_{kh,l},
\end{aligned}
$$
where we used that $ \operatorname{Id}-r(-\tau_{l}\Delta_h)=\tau_{l}\Delta_hr(-\tau_{l}\Delta_h)$ and $P_hG_{kh,0}=0$.
Using the properties of the rational function $r$ and the elliptic theory, we have
$$
\|r(-\tau_{m}\Delta_h)G_{kh,m+1}\|_{L^\infty(\Omega)}
\le C  \|g\|_{L^\infty(I;L^\infty(\partial\Omega))}.
$$
Hence for $ m=1,2,\dots,M$,
$$
\begin{aligned}
\| v_{kh,m}\|_{L^\infty(\Omega)}\le C  \|g\|_{L^\infty(I;L^\infty(\partial\Omega))}+\sum_{l=1}^m \tau_l\left\|\left(\Delta_h\prod_{j=1}^{m-l+1} r(-\tau_{m-j+1}\Delta_h)\right)P_hG_{kh,l}\right\|_{L^\infty(\Omega)}.
\end{aligned}
$$
From Proposition \ref{prop: homogenous laplaca}, since each term in the sum on the right-hand side can be thought of as a homogeneous solution with initial condition $P_hG_{kh,l}$ at $t=t_{l-1}$, we have
$$
\left\|\left(\Delta_h\prod_{j=1}^{m-l+1} r(-\tau_{m-j+1}\Delta_h)\right)P_hG_{kh,l}\right\|_{L^{\infty}(\Omega)}\le \frac{C}{t_m-t_{l-1}}\|G_{kh,l}\|_{L^{\infty}(\Omega)}.
$$
Thus, we obtain
\begin{equation}\label{eq: before Holder}
\| v_{kh,m}\|_{L^{\infty}(\Omega)}\le C  \|g\|_{L^\infty(I;L^\infty(\partial\Omega))}+ C\sum_{l=1}^m  \frac{\tau_l}{t_m-t_{l-1}}\|G_{kh,l}\|_{L^{\infty}(\Omega)},\quad m=1,2,\dots,M.
\end{equation}
From the above estimate and using \eqref{eq: estimate for fm in lp},
\begin{equation*}
\begin{aligned}
\| v_{kh}\|_{L^\infty(I;L^\infty(\Omega))}&=\max_{1\le m\le M}\| v_{kh,m}\|_{L^\infty(\Omega)}\\
&\le  C  \|g\|_{L^\infty(I;L^\infty(\partial\Omega))}+C\max_{1\le m\le M}\sum_{l=1}^m \frac{\tau_l}{t_m-t_{l-1}}\|G_{kh,l}\|_{L^\infty(\Omega)}\\
&\le  C  \|g\|_{L^\infty(I;L^\infty(\partial\Omega))}+C\max_{1\le l\le M}\|G_{kh,l}\|_{L^\infty(\Omega)}\max_{1\le m\le M}\sum_{l=1}^m  \frac{\tau_l}{t_m-t_{l-1}}\\
&\le C\ln{\frac{T}{\tau}} \|g\|_{L^\infty(I;L^\infty(\partial\Omega))},
\end{aligned}
\end{equation*}
where in the last step we used that 
$$
\max_{1\le l\le M}\|G_{kh,l}\|_{L^\infty(\Omega)}\le C\|g\|_{L^\infty(I;L^\infty(\partial\Omega))}
$$
 and
\begin{equation}\label{eq: estimating sum by integral for log}
\sum_{l=1}^m  \frac{\tau_l}{t_m-t_{l-1}}\le 1+\int_0^{t_{m-1}}\frac{dt}{t_m-t}=1+\ln{\frac{t_m}{\tau_m}}\le C\ln{\frac{T}{\tau}},
\end{equation}
by using the assumption $\tau_{\min}\geq C \tau^\beta$ and $\tau \le \frac{T}{4}$.
Finally, using the triangle inequality, it follows that
$$
\| u_{kh}\|_{L^\infty(I;L^\infty(\Omega))}\le \| v_{kh}\|_{L^\infty(I;L^\infty(\Omega))}+\| P_hG_{kh}\|_{L^\infty(I;L^\infty(\Omega))}\le C\ln{\frac{T}{\tau}} \|g\|_{L^\infty(I;L^\infty(\partial\Omega))}.
$$
Thus we obtain the result. 
\end{proof}
\medskip


\end{document}